\newtheorem{theorem}{Theorem}
\newtheorem{proposition}[theorem]{Proposition}
\newtheorem{lemma}[theorem]{Lemma}
\newtheorem{definition}{Definition}
\newtheorem{corollary}{Corollary}
\definecolor{Red}{cmyk}{0,1,1,0}
\definecolor{Blue}{cmyk}{1,1,0,0}
\newcommand{\ba}{\begin{array}}
\newcommand{\ea}{\end{array}}
\newcommand{\ee}{\end{equation*}}
\newcommand{\ben}{\begin{enumerate}}
\newcommand{\een}{\end{enumerate}}
\let\o=\omega
\let\s=\sigma
\let\O=\Omega
\newcommand{\R}{\mathbb{R}}
\newcommand{\Z}{\mathbb{Z}}
\newcommand{\N}{\mathbb{N}}
\begin{document}

\title{A Large Deviation Principle for Gibbs States on Countable Markov Shifts at Zero Temperature}
\author{Rodrigo Bissacot\thanks{Supported by CNPq grant 312112/2015-7 and FAPESP grant 11/16265-8.
Supported also by EU Marie-Curie IRSES Brazilian-European partnership in Dynamical Systems (FP7-PEOPLE-2012-IRSES 318999 BREUDS).} \\
\footnotesize{Institut of Mathematics and Statistics (IME-USP)}\\
\footnotesize{University of S\~ao Paulo}\\
\footnotesize{\texttt{rodrigo.bissacot@gmail.com}}
\\[0.3cm]
Jairo K. Mengue\\
\footnotesize{Instituto de Matem\'{a}tica e Estat\'{i}stica}\\
\footnotesize{Universidade Federal do Rio Grande do Sul (UFRGS)}\\
\footnotesize{\texttt{jairo.mengue@ufrgs.br}}
\\[0.3cm]
Edgardo P\'erez\thanks{Supported by FAPESP grant 12/06368-7. Supported also by EU Marie-Curie IRSES Brazilian-European partnership in Dynamical Systems (FP7-PEOPLE-2012-IRSES 318999 BREUDS).}\\
\footnotesize{D\'epartement de Math\'ematiques}\\
\footnotesize{Universit\'e de Brest, 6, rue Victor le Gorgeu, 29285 Brest, France}\\
\footnotesize{\texttt{edgardomath@gmail.com}}\\
}
\maketitle

\begin{abstract}
Let $\Sigma_{A}(\N)$ be a topologically mixing countable Markov shift with the BIP pro-perty over the alphabet $\N$ and  $f: \Sigma_{A}(\N) \rightarrow \mathbb{R}$ a potential satisfying the Walters condition with finite Gurevich pressure. Under suitable hypotheses, we prove the existence of a Large Deviation Principle 
for the family $(\mu_{\beta})_{\beta > 0}$ where each $\mu_{\beta}$ is the Gibbs measure associated to the potential $\beta f$. Our main theorem generalizes from finite to countable alphabets and also to a larger class of potentials a previous result of A. Baraviera, A. O. Lopes and P. Thieullen. 
\end{abstract}

\section{Introduction and main result}
After the seminal books of R. Bowen  \cite{Bo} and D. Ruelle  \cite{Ru}, a good amount of literature was produced by dynamicists, probabilists and mathematical physicists interested in rigorous results inspired by models in statistical mechanics. This theory is known nowadays as thermodynamic formalism. Many papers are concentrated in the one-dimensional case where the state space is a finite set $S$ and $\O = S^{\Z}$ is the configuration space. Using a standard argument introduced by Sinai \cite{Sinai}, see \cite{Daon} for the adaptation to the case of countable alphabets, we can reduce this study to the models where the configurations are in $S^{\N}$ where the machinery of Ruelle operator can be applied.

In the last two decades, this theory was extended to the non-compact setting, usually called of one-dimensional unbounded spins systems by the statistical physics community when the configuration space is $S^{\Z}$ (or $S^{\N}$), where $S=\N$ (or another non-compact metric space). Many results for finite state space spin models have today the equivalent statement in the non-compact setting. These results were obtained mainly by R. D. Mauldin and M. Urba\'nski  \cite{MU2} and by the fundamental progress made by O. Sarig  \cite{Sarig01, Sarig03, Sarig04} in the setting of countable Markov Shifts. A good survey of O. Sarig's work, who received the Brin Prize in dynamical systems in 2013 for his contributions, was recently written by Pesin, see \cite{Pesin}.

Differently from the multidimensional case, transitive one-dimensional Markov shifts have at maximum one equilibrium state  \cite{Sarig04, Sarig03} when the potential has summable variation (extended to Walters potentials {in} \cite{Daon}). This means that the only possible phase transition with respect to the number of elements of the set of equilibrium measures for this class of systems is to change from the existence to the absence of the equilibrium states when we consider different inverse of temperatures $\beta$. {O. Sarig} proved that the set of positive $\beta$'s such that we have phase transition at $\beta$ can have positive Lebesgue measure \cite{Sarig05, Sarig06}, even in the case where the potential depends on a finite number of coordinates. {G. Iommi} made a link with ergodic optimization proving that the absence of phase transitions in Renewal shifts is equivalent to the existence of maximizing measures  \cite{Iommi}.

In addition to phase transitions, we have others connections between ergodic optimization and thermodynamic formalism. The accumulation points at zero temperature of equilibrium states are maximizing measures  \cite{BLL, CLT, Jenkinson, JMU0}. Sub-actions (dual objects of maximizing measures)  can be constructed as accumulation points of $\frac{1}{\beta}\log h_\beta$, where $h_\beta$ is the main eigenfunction of the Ruelle operator associated to $\beta f$. In the proof of the Large Deviation Principle (LDP) for the family of equilibrium measures $(\mu_\beta)_{\beta}$ when the temperature goes to zero, already established on the compact setting, the deviation function is written in terms of sub-actions \cite{BLT, LM, LM1}.

The contributions of our paper are: it is presented an alternative argument to prove the LDP without the involution kernel used in the original proof \cite{BLT}. Moreover, we prove that this strategy also works in the non-compact setting where the equilibrium measures have the Gibbs property, that is, topologically mixing countable Markov shifts with the BIP property. Furthermore, we generalize some results about zero temperature limits for potentials satisfying the Walters condition: the existence of maximizing measures, sub-actions and maximizing sets.

One of the main assumptions in our proofs is the uniqueness of the
maximizing measure. This hyphotesis is also used to guarantee that the deviation function $I$ is well defined, see equation $(\ref{deviation function})$ below. Additionally, in \cite{BLM} it is presented a class of potentials which have more than one maximizing measure; the authors show that a Large Deviation Principle (if it exists) does not have the same deviation function $I$. In \cite{Mengue2} it is exhibited the deviation function for this potentials. The uniqueness is a generic condition in several spaces of potentials in the compact case \cite{BLL, Bou, CLT, Jenkinson} and dense in suitable spaces defined in the non-compact setting \cite{DUZ}. 

Despite the fact that Gibbs and equilibrium measures are central objects in thermodynamic formalism and that the study of ground states is one of the main questions when we analyze a model in statistical mechanics, concerning Large Deviation Principles at zero temperature we are aware of only the few papers cited above. 

Our main result is the following:

\begin{theorem}\label{teoremaA}
Let $\Sigma_{A}(\N)$ be a topologically mixing countable Markov shift with the BIP property over the alphabet $\N$. Let  $f: \Sigma_{A}(\N) \rightarrow \mathbb{R}$ be a potential satisfying the Walters condition with $\operatorname{Var}_1(f)<\infty$ and finite Gurevich pressure.  Also assume that $f$ admits a unique maximizing measure  $\mu$. For each $\beta>1$,
denote $\mu_\beta$ the unique equilibrium measure associated to $\beta f$. Then $(\mu_\beta)_{\beta}$  satisfies
a Large Deviation Principle, that is, for any cylinder $C$ of $\Sigma_A(\mathbb{N})$,
\begin{equation*}
\lim_{\beta\rightarrow \infty} \frac{1}{\beta}\log\mu_{\beta}(C)=-\inf_{x\in C}I(x)
\end{equation*}
where
\begin{equation}\label{deviation function}
I(x)=-\sum_{j=0}^\infty (f + V - V\circ \sigma-\mu(f))(\sigma^j(x))
\end{equation}
and $V$ is any bounded calibrated sub-action for $f$.
\end{theorem}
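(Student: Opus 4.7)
The plan is to combine the Bowen--Gibbs property of $\mu_\beta = h_\beta \nu_\beta$ on BIP shifts with two zero-temperature asymptotics that I would take from earlier in the paper: $\frac{1}{\beta} P(\beta f) \to \mu(f)$ (which follows from uniqueness of the maximizing measure) and $V_\beta := \frac{1}{\beta}\log h_\beta \to V$ uniformly on each cylinder, where $V$ is a bounded calibrated sub-action. In the Walters setting with $\operatorname{Var}_1(f)<\infty$ and BIP, I would also verify that the Gibbs constant $K(\beta)$ satisfies $\frac{1}{\beta}\log K(\beta) \to 0$. Together these yield, for any cylinder $C = [a_0,\ldots,a_{n-1}]$ and any $y \in C$,
\begin{equation*}
\frac{1}{\beta}\log \mu_\beta(C) = V_\beta(y) + S_n f(y) - \frac{n}{\beta} P(\beta f) + o(1) \qquad (\beta \to \infty).
\end{equation*}

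Setting $g := f + V - V\circ\sigma - \mu(f) \leq 0$, I would exploit the telescoping identity
\begin{equation*}
V(y) + S_n f(y) - n\mu(f) = V(\sigma^n y) + \sum_{j=0}^{n-1} g(\sigma^j y) = V(\sigma^n y) - I(y) + I(\sigma^n y)
\end{equation*}
to convert Birkhoff sums of $f$ into partial sums defining $I$. For the upper bound, given a sub-cylinder $C_m \subset C$ of depth $n+m$ and any $y \in C_m$, taking $\limsup$ in the upper Gibbs estimate and minimizing over $y$ yields
\begin{equation*}
\limsup_{\beta \to \infty}\frac{1}{\beta}\log\mu_\beta(C) \leq V(\sigma^{n+m}y) - I(y) + I(\sigma^{n+m}y).
\end{equation*}
For the lower bound, given $\varepsilon > 0$ I would pick $x \in C$ with $I(x) \leq \inf_{z \in C} I(z) + \varepsilon$ and a sub-cylinder $C_m \ni x$, then use $\mu_\beta(C) \geq \mu_\beta(C_m)$ with the lower Gibbs bound to obtain the symmetric inequality with $\liminf$. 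Since $I(x) < \infty$ forces the tail $I(\sigma^{n+m}x) \to 0$ as $m \to \infty$ and makes $\sigma^{n+m}x$ approach the Aubry set (on which $V$ is prescribed up to the normalization defining $I$), letting $m \to \infty$ should collapse $V(\sigma^{n+m}y) + I(\sigma^{n+m}y)$, delivering $\lim \frac{1}{\beta}\log\mu_\beta(C) = -\inf_{x \in C} I(x)$.

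The main obstacle will be the non-compactness of the alphabet. First, establishing $\frac{1}{\beta}\log K(\beta) \to 0$ for Walters potentials with $\operatorname{Var}_1(f) < \infty$ on BIP shifts is delicate and relies on Sarig-type spectral estimates for the Ruelle operator rather than the Bowen summable-variation trick. Second, the tail analysis along an orbit approaching the Aubry set inside a given cylinder must, in the absence of the involution kernel used in \cite{BLT}, be carried out using only the calibration equation $V(x) = \sup_{\sigma y = x}[V(y) + f(y) - \mu(f)]$ together with the existence of approximately maximizing orbits inside $C$; the degenerate case $\inf_{x \in C} I(x) = +\infty$ is handled separately by a pure upper Gibbs bound.
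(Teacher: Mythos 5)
There is a genuine gap at the heart of your proposal, and it is precisely the point you flag as ``delicate'': the claim that $\frac{1}{\beta}\log K(\beta)\to 0$ for the Gibbs constant $K(\beta)$ of $\mu_\beta$. The paper does \emph{not} establish this, and one should not expect it to hold. What the paper's Lemma~\ref{gibbsinequality} gives is only the weaker statement that there exist $\eta_1,\eta_2$, independent of $\beta,m,x$, with
\[
e^{\beta\eta_1}\leq\frac{\mu_\beta[x_0\cdots x_{m-1}]}{e^{\beta S_m f(x)-mP(\beta f)}}\leq e^{\beta\eta_2},
\]
i.e.\ $\frac{1}{\beta}\log K(\beta)$ is \emph{bounded}, not $o(1)$. (The usual distortion argument for the Bowen--Gibbs property of $\beta f$ produces constants that are themselves exponential in $\beta$, since the variations of $\beta f$ scale linearly in $\beta$.) Once the error is merely $O(1)$ in the $\frac{1}{\beta}\log$ scale, your displayed identity $\frac{1}{\beta}\log\mu_\beta(C)=V_\beta(y)+S_n f(y)-\frac{n}{\beta}P(\beta f)+o(1)$ is false, and the telescoping that follows cannot close the gap between upper and lower bounds, because the residual constant $\eta_2-\eta_1$ never disappears as $m\to\infty$.

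The paper circumvents exactly this difficulty by not using the Gibbs bound to compute the limit, only to guarantee boundedness of $\frac{1}{\beta}\log\mu_\beta(C)$ (so one can extract convergent subsequences by a diagonal argument). The asymptotic itself is obtained through a ratio trick in which the Gibbs constant cancels identically: Lemma~\ref{lemaA} gives the exact identity $\mu_\beta([x_1\ldots x_n])=\int_{[x_0\ldots x_n]}e^{-g_\beta}\,d\mu_\beta$, where $g_\beta=\beta f+\log h_\beta-\log(h_\beta\circ\sigma)-P(\beta f)$, and Proposition~\ref{propoB} then shows $\lim_n\lim_\beta\frac{1}{\beta}\log\frac{\mu_\beta[x_1\ldots x_n]}{\mu_\beta[x_0\ldots x_n]}=R_+(x)$, using the uniform-on-compacts convergence $g_\beta/\beta\to R_-$ with the variation control of Lemma~\ref{convergeR}. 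This produces the clean cocycle relation $\tilde I(x)=R_+(x)+\tilde I(\sigma x)$ for the deviation function $\tilde I$, which upon iteration yields $\tilde I\geq R_+^\infty=I$; the reverse inequality is obtained from lower semi-continuity and the accumulation of $\sigma^n x$ at $\operatorname{supp}(\mu)$ when $R_+^\infty(x)<\infty$ (Proposition~\ref{propoA}). You should also note a secondary issue in your upper bound: $\mu_\beta(C)=\sum_{C_m\subset C}\mu_\beta(C_m)$ involves a countable sum, and reducing it to a maximum over finitely many subcylinders requires the coercivity-plus-BIP exhaustion argument carried out inside Proposition~\ref{devfunction}, which cannot simply be absorbed into an $o(1)$ term. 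If you replace the hoped-for sharp Gibbs asymptotic by the exact cocycle identity of Lemma~\ref{lemaA}, your overall structure --- telescoping $I$, using the calibration equation, and controlling the tail along an orbit approaching the Aubry set --- does line up with the paper's proof.
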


As a corollary, we obtain a new proof of the same principle in the case of topologically mixing subshifts over a finite alphabet previously proved by A. Baraviera, A. Lopes and P. Thieullen  \cite{BLT}.

The paper is organized as follows. In section 2 we fix the setting, we review some results of the thermodynamic formalism and ergodic optimization on countable Markov shifts. In section 3 we extend some results about zero temperature to potentials sa-tisfying the Walters condition in the non-compact setting. We also prove the existence of maximizing measures, calibrated sub-actions and maximizing sets. In section 4, we prove the main result, that is, the Large Deviation Principle at zero temperature for the family of Gibbs measures  $(\mu_\beta)_{\beta}$. Finally,  in section 5, we point out some remarks and further directions to be investigated.

We should remark that the results of this paper are substantially contained in the Ph.D. thesis of two of us. The {main ideas to an} alternative proof of the Large Deviation Principle on the compact setting was done in \cite{Mengue},  and the adaptation to the countable Markov shifts was obtained in \cite{Pe}.

\section{Thermodynamic formalism and ergodic optimization on countable Markov shifts}

%\subsection{Setting}

Let $\mathbb{N}$ be the set of non-negative integers and $\Sigma(\mathbb{N})$ be the set of sequences of elements in
$\mathbb{N}$. We will always assume that there exists an infinite matrix $A:\mathbb{N}\times \mathbb{N} \rightarrow \{0,1\}$, such that $\Sigma_A(\mathbb{N}):=\{x\in \Sigma(\mathbb{N}): A(x_i,x_{i+1})=1,  \ \forall i\geq 0\}$, that is, our subshift $\Sigma_A(\mathbb{N})$ is \textit{Markov}. As usual,  the subsets $[a_0,\ldots,a_{n-1}]=\{x\in \Sigma_A(\mathbb{N}): x_i=a_i \ \text{when} \ 0\leq i \leq n-1\}$ are called \textit{cylinders}.
A \textit{word} $w$ is a finite concatenation of symbols in $\mathbb{N}$ and we say that a word is called $A$-\textit{admissible }or simply \textit{admissible} if the cylinder $[w]$ is non-empty. In particular, given a symbol $a \in \mathbb{N}$, we say that $a$ is admissible when $[a] \neq \emptyset$.

We say that $\Sigma_A(\mathbb{N})$ is \textit{finitely primitive} when there exist a $K_0$ in $\mathbb{N}$ and a finite subalphabet $\mathbb{F} \subset \mathbb{N}$ such that for any pair of admissible symbols $a,b$ in $\mathbb{N}$ there are  $i_1,...,i_{K_0}$ in $\mathbb{F}$ satisfying
 $A(a,i_1)\cdot A(i_1,i_2)\cdot\cdot\cdot A(i_{K_0-2},i_{K_0-1})\cdot A(i_{K_0},b)=1$. This condition says that for any pair of admissible symbols in the alphabet $\mathbb{N}$ can be connected by an admissible word of  $K_0$ symbols $w = i_1 i_2 ... i_{K_0-1} i_{K_0}$ where all the symbols $i_j$ belong to the finite alphabet $\mathbb{F}$.

Our dynamics is given by $\sigma:\Sigma_A(\mathbb{N})\rightarrow \Sigma_A(\mathbb{N})$, 
$\sigma(x_0,x_1,x_2,\ldots)=(x_1,x_2,\ldots)$, the standard \textit{shift map}.
We will consider $\Sigma_A(\mathbb{N})$ as a metric space. Fixed $r\in (0,1)$,  for any two sequences $x,y$ in $\Sigma_A(\mathbb{N})$ the distance is defined by $d(x,y)=r^{t(x,y)}$, where $t(x,y)=\inf(\{k:x_k \neq y_k\} \cup \{\infty\})$. With this metric $\Sigma_A(\mathbb{N})$ is a Polish space, the metric generates the product topology which has the cylinders as a base and the shift map $\sigma$ is continuous with respect to this topology. We denote $\mathcal{M}_\sigma(\Sigma_A(\mathbb{N}))$ the set of $\sigma$-invariant Borel probability measures.

Unless we say the contrary, $\Sigma_A(\mathbb{N})$ will be a \textit{topologically mixing} Markov shift: for each pair of admissible symbols $a,b$ there exist a natural number $N(a,b)$ such that for any $n\geq N(a,b)$ we have $[a] \cap \sigma^{-n}([b]) \neq \emptyset$. A good part of the literature uses the notion of \textit{BIP (big images and preimages property)}: there exists a finite set $B \subset \mathbb{N}$ such that for any admissible $a$ in $\mathbb{N}$ we can find $b_1,b_2$ in $B$ such that the word $b_1ab_2$ is admissible. When $\Sigma_A(\mathbb{N})$ is topologically mixing the notions of BIP and finitely primitive coincide and we will use both indiscriminately.

We equip $\mathcal{M}_\sigma(\Sigma_A(\mathbb{N}))$ with the weak* topology. In this way
 $(\mu_n)_{n\geq 1}$ converges weakly* to $\mu$ if and only if
$\int g d \mu_n \rightarrow \int g d\mu$ for every bounded continuous function $g:\Sigma_A(\mathbb{N})\rightarrow \mathbb{R}$. This is equivalent to $\mu_n(C) \rightarrow \mu(C)$ for all cylinder {set} $C$. We denote $C^0(\Sigma_A(\mathbb{N}))$ by the space of continuous real-valued functions on $\Sigma_A(\mathbb{N})$, equipped with the topology of uniform convergence on compact subsets.
 
 Given a function $f:\Sigma_A(\mathbb{N})\to\mathbb{R} $, we define  $S_nf=\sum\limits_{j=0}^{n-1}f \circ \sigma^j$ and $S_0f=0$.
%Denote by $\pi:\Sigma_A(\mathbb{N}) \rightarrow \mathbb{N}$ the projection on the first coordinate, that is,
%$\pi(x)=\pi(x_0,x_1,\ldots)=x_0$.\\
%For a fixed function $f:\Sigma_A(\mathbb{N}) \rightarrow \mathbb{R}$, we define 
The \textit{$n$-th variation of
$f$} is the number
$\operatorname{Var}_nf:=\sup\{|f(x)-f(y)|; x_0=y_0,\ldots,x_{n-1}=y_{n-1}\}$
and we say that $f$ has \textit{summable variations} when $\operatorname{Var}(f) := \sum_{n=1}^{\infty} \operatorname{Var}_n f < \infty.$ When, in addition, there exist a $K>0$ such that for all $n \in \mathbb{N}$ we have $\operatorname{Var}_nf \leq Kr^{n}$, then we say that $f$ is locally H\"older.

\begin{definition}
A function $f:\Sigma_A(\mathbb{N}) \rightarrow \mathbb{R}$ satisfies the
Walters condition when for every $k\geq 1$ we have \ $\sup\limits_{n\geq 1}[\operatorname{Var}_{n+k}S_nf]<\infty$ and $\lim\limits_{k \rightarrow \infty}\sup\limits_{n\geq 1}[\operatorname{Var}_{n+k}S_nf]=0.$
\end{definition}

The locally H\"older condition clearly implies summable variation, the last one the usual regularity in papers nowadays about thermodynamic formalism on countable Markov shifts. These two conditions are stronger and imply the Walters regularity. In any of the three cases the function will be uniformly continuous, but it is not necessarily bounded since the space is non-compact.

{Given a function $f:\Sigma_A(\mathbb{N})\to\mathbb{R}$ satisfying the Walters condition we define the \textit{Gurevich pressure  of $f$} by} $P(f) := \lim_{n\to\infty}\frac{1}{n}\log \sum_{\sigma^{n}(x)=x}e^{S_n(f(x))}\chi_{[a](x)}.$ The definition does not depend of the symbol  $a$ (see \cite{Sarig03}, Proposition 3.2).

Regarding the regularity of the potential, one of the most general assumptions today where it is known that the machinery of the Ruelle operator can be applied (see \cite{Daon, Sarig03}) in the thermodynamic formalism is:

\medskip

\textbf{Assumption I:} The potential $f:\Sigma_A(\mathbb{N}) \rightarrow \mathbb{R}$ is Walters with $\operatorname{Var}_1(f)<\infty$.

\medskip

In the BIP case, finite Gurevich pressure and Assumption I imply that:
 \begin{equation}
 \displaystyle\sum_{i\in \mathbb{N}}\exp(\sup f|_{[i]})<\infty
 \end{equation}
 In particular, the potential is \textit{coercive}, that is, $\displaystyle\lim_{i \rightarrow \infty}(\sup f|_{[i]}) = - \infty$.
 
 For the class of potentials satisfying this assumption we have the \textit{variational principle} for the pressure:

\begin{theorem} [Sarig]\label{teoremaSarig} Let $\Sigma_{A}(\N)$ be a topologically mixing countable Markov shift. Let  $f: \Sigma_{A}(\N) \rightarrow \mathbb{R}$ be a potential satisfying the Assumption I, with finite Gurevich pressure and $\sup f < \infty$. Then,
\begin{equation}\label{variational}
P(f) = \sup\left\{ h(\mu) + \int f\, d\mu\,|\, \mu \in \mathcal{M}_{\sigma}(\Sigma_A(\mathbb{N}))\,\, \textsl{and} \,\,\, \int f\, d\mu >-\infty\right\}.
 \end{equation}
\end{theorem}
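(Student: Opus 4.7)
The plan is to prove the two inequalities separately, following the classical route adapted to the non-compact setting. For the upper bound $P(f)\geq h(\mu)+\int f\,d\mu$, I would start from the countable generating partition $\mathcal{P}=\{[a]:a\in\mathbb{N}\}$ and apply Jensen's inequality to $\mathcal{P}_n=\bigvee_{i=0}^{n-1}\sigma^{-i}\mathcal{P}$ with weights $a_C=\exp\bigl(\sup S_nf|_C\bigr)$, yielding
\[ H_\mu(\mathcal{P}_n)+\int S_nf\,d\mu \leq \log\sum_{C\in\mathcal{P}_n}\exp\bigl(\sup S_nf|_C\bigr). \]
Dividing by $n$ and letting $n\to\infty$ gives $h_\mu(\sigma,\mathcal{P})+\int f\,d\mu\leq\limsup_n\tfrac{1}{n}\log\sum_C\exp(\sup S_nf|_C)$. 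The remaining step is to identify this partition sum with $P(f)$: by the BIP property, every $n$-cylinder can be closed into a periodic orbit through a fixed reference symbol $a$ by inserting an admissible word of bounded length $K_0$ in the finite sub-alphabet $\mathbb{F}$ supplied by finite primitivity, and the resulting distortion is uniformly controlled by $\operatorname{Var}_1(f)<\infty$ together with the Walters condition, so the partition sum is comparable (up to a factor independent of $n$) with $Z_{n+K_0}(f,[a])=\sum_{\sigma^{n+K_0}x=x,\ x\in[a]}e^{S_{n+K_0}f(x)}$; Sarig's identity $P(f)=\lim_n\tfrac{1}{n}\log Z_n(f,[a])$ then closes the argument.

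The case $H_\mu(\mathcal{P})=\infty$ requires extra care. I would handle it by truncation: apply the estimate above to the finite-entropy partition $\mathcal{P}^{(N)}=\{[0],\ldots,[N],\bigcup_{a>N}[a]\}$ and let $N\to\infty$, using the hypothesis $\sup f<\infty$ together with the coercivity $\sum_i\exp(\sup f|_{[i]})<\infty$ to control both the tail of $\int f\,d\mu$ and the entropy contribution of the lumped tail atom, recovering the full entropy in the limit.

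For the lower bound, the plan is to exhibit an invariant measure $\mu_f$ saturating the inequality. Under Assumption I, BIP, and finite Gurevich pressure, Sarig's Ruelle--Perron--Frobenius theorem produces a positive continuous eigenfunction $h$ of the transfer operator and a conformal probability $\nu$ at eigenvalue $e^{P(f)}$, normalized so that $\int h\,d\nu=1$. The measure $\mu_f=h\,\nu$ is $\sigma$-invariant, and applying Rokhlin's entropy formula to the induced Jacobian $e^{f-P(f)}h/(h\circ\sigma)$ yields $h(\mu_f)+\int f\,d\mu_f=P(f)$. The main obstacle is this RPF construction itself in the non-compact setting: existence of $h$ requires a Birkhoff--Hilbert cone contraction argument on a suitable cone of Walters-regular functions, and finiteness of $\int h\,d\nu$ uses the BIP assumption essentially through the estimate $\sum_i\exp(\sup f|_{[i]})<\infty$.
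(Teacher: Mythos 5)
The paper offers no proof of this theorem; it is stated as a citation to Sarig's lecture notes and to Daon. Against that backdrop, your blind attempt has a genuine gap: both halves of your argument lean on the BIP (finitely primitive) property, but the theorem as stated assumes only that the shift is topologically mixing. In the upper bound you close $n$-cylinders into periodic orbits through a fixed symbol by inserting admissible words of uniformly bounded length $K_0$ drawn from the finite sub-alphabet $\mathbb{F}$; without BIP those connecting words need not have uniformly bounded length, the inserted symbols need not lie in a fixed finite set, so the distortion is no longer controlled by $\operatorname{Var}_1(f)$ together with the Walters condition, and the partition sum $\sum_{C\in\mathcal{P}_n}\exp(\sup S_nf|_C)$ is not comparable to $Z_{n+K_0}(f,[a])$ by a factor independent of $n$. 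The estimate $\sum_i\exp(\sup f|_{[i]})<\infty$ you invoke in the truncation step is, as the paper itself notes, a consequence of BIP together with finite pressure and Assumption I, and is not available in the merely topologically mixing case.

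Your plan for the other inequality is also structurally misaimed: the variational principle asserts that $P(f)$ equals the supremum, not that the supremum is attained. The RPF theorem as quoted in the paper (Theorem 3) produces a positive eigenfunction $h$ and a conservative eigenmeasure $\nu$ finite on cylinders, but in general $h$ is unbounded and $\nu$ is infinite; the normalization $\int h\,d\nu = 1$ and the resulting finite invariant measure $d\mu_f = h\,d\nu$ require positive recurrence, which is again an extra hypothesis — one that BIP (together with your other assumptions) does supply, but the theorem here does not assume. Without BIP there may be no equilibrium measure at all, yet the variational principle still holds, so $P(f)\leq\sup\{h(\mu)+\int f\,d\mu\}$ must be obtained indirectly, e.g.\ by exhausting $\Sigma_A(\mathbb{N})$ by an increasing sequence of compact finite-alphabet sub-shifts, applying the classical variational principle on each, and showing that the corresponding pressures increase to $P(f)$, rather than by constructing a maximizer via Rokhlin's formula.
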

\noindent

The invariant Borel probability measures which attain the supremum (\ref{variational}) are called \textit{equilibrium measures}. Since the space $\Sigma_{A}(\N)$ is not compact, the existence of equilibrium measures is not a trivial problem and depends of a combination of properties of the infinite matrix $A$ which defines the shift $\Sigma_{A}(\N)$ and the potential $f$. 

From the dynamical point of view, the minimal assumption considered on the shift $\Sigma_{A}(\N)$ is to assume that the matrix $A$ is  \textit{irreducible}: for each pair of admissible symbols $a$ and $b$ in $\N$, there exist an admissible word $\o_{ab}$ which connects $a$ and $b$. In other words, $a\o_{ab} b$ is admissible. The matrix $A$ be irreducible is equivalent to the shift map $\s$ be \textit{transitive}. And each transitive countable Markov shift $\Sigma_{A}(\N)$ can be decomposed in a finite number of topological mixing Markov shifts and the equilibrium measures

$$L_f:C_b(\Sigma_A(\mathbb{N}))\rightarrow C_b(\Sigma_A(\mathbb{N})), \hspace{0.3cm}
(L_f g)(x):=\sum\limits_{\sigma(y)=x}e^{f(y)}g(y).$$
In this case $\displaystyle{\|L_f\|_0\leq \sum_{i\in \mathbb{N}}\exp{(\sup f|_{[i]})}}$. 

\begin{theorem}[Generalized Ruelle's Perron-Frobenius Theorem \cite{Daon,Sarig03}]\label{ruelle}
Suppose $f$ satisfies the Assumption I. There are a $\lambda>0$, a positive
continuous function $h$, and a conservative measure $\nu$ which is finite on cylinders,
such that $L_{f}h=\lambda h$, $L^*_{f}\nu=\lambda \nu$, and $\int hd\nu=1$. In this
case $\lambda=e^{P(f)}$ {and for every $a\in\mathbb{N}$,  $\lambda^{-n}(L^n_f \chi_{[a]})\rightarrow h \cdot \nu([a])$
uniformly on compacts.}
\end{theorem}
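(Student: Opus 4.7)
The plan is to follow the inducing strategy of Sarig (adapted to Walters potentials by Daon): reduce to a full-shift system by passing to the first-return map on a distinguished cylinder, apply the classical Ruelle--Perron--Frobenius theorem there, and transport the eigendata back to $\Sigma_A(\mathbb{N})$.

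First I fix a symbol $a\in B$ in the BIP set and consider the first-return map $\tilde{\sigma}\colon [a]\to[a]$, $\tilde{\sigma}(x)=\sigma^{\phi(x)}(x)$, where $\phi(x)=\inf\{n\geq 1:\sigma^n x\in[a]\}$. The BIP property (equivalently, finite primitivity) ensures that $([a],\tilde{\sigma})$ is topologically conjugate to a full shift over the countable alphabet $\mathcal{A}$ of admissible loops from $a$ back to $a$. Consider the induced normalized potential $\tilde{f}:=S_\phi f - P(f)\phi$. A central technical step is to verify that $\tilde{f}$ satisfies summable variations on the full-shift coding of $([a],\tilde{\sigma})$, together with the summability $\sum_{\ell\in\mathcal{A}}e^{\sup\tilde{f}|_{[\ell]}}<\infty$. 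The regularity uses the Walters condition on $f$ (which controls $\operatorname{Var}_{n+k}S_nf$) combined with $\operatorname{Var}_1(f)<\infty$; the summability follows from the discriminant identity characterizing Gurevich pressure.

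With these properties in hand, the classical RPF theorem applied to $L_{\tilde{f}}$ on the countable full shift yields eigenvalue $1$, a positive continuous eigenfunction $\tilde{h}$, a conservative eigenmeasure $\tilde{\nu}$ finite on cylinders, and exponential mixing of the normalized iterates. One then transports this data to $\Sigma_A(\mathbb{N})$: the eigenfunction $h$ is extended by a first-hitting-time formula $h(x)=e^{S_{\tau(x)}f(x)-\tau(x)P(f)}\tilde{h}(\sigma^{\tau(x)}x)$, with $\tau(x)=\inf\{n\geq 0:\sigma^n x\in[a]\}$, and $\tilde{\nu}$ extends to $\nu$ via the corresponding first-passage decomposition of the shift. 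A direct computation then shows $L_fh=e^{P(f)}h$ and $L_f^*\nu=e^{P(f)}\nu$, identifying $\lambda=e^{P(f)}$; conservativity and cylinder-finiteness of $\nu$ follow from the BIP structure and the coercivity of $f$, and the normalization $\int h\,d\nu=1$ is rescaling.

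Finally, the uniform convergence $\lambda^{-n}L_f^n\chi_{[a]}\to h\cdot\nu([a])$ on compact sets is deduced from the exponential mixing on the induced system combined with the first-passage decomposition. Compact subsets of $\Sigma_A(\mathbb{N})$ in the product topology sit inside a finite union of cylinders with bounded initial alphabet, so the Walters condition gives uniform estimates transferring the induced-level convergence. The main obstacle is the regularity analysis for the induced potential $\tilde{f}=S_\phi f-P(f)\phi$: return times $\phi$ can be arbitrarily large, so the Walters bounds on finite sums $S_nf$ must be parlayed---via BIP combinatorics and the exponential tail supplied by $e^{-P(f)\phi}$---into summable variations for $\tilde{f}$ on the loop alphabet $\mathcal{A}$. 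This is precisely the point where the extension from summable-variations to Walters potentials (Daon) requires genuine work beyond Sarig's original argument.
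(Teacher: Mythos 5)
Before comparing: the paper itself does not prove Theorem \ref{ruelle} at all --- it is quoted from the literature (the line right after the statement refers the reader to \cite{Daon} and \cite{Sarig03}), so your proposal has to stand on its own as a sketch of Sarig--Daon's theorem. Your overall strategy (induce on a cylinder $[a]$ with $a$ in the BIP set, pass to the induced potential $\tilde{f}=S_\phi f-\phi P(f)$ on the countable full shift of first-return loops, apply an RPF theorem there, and transport the eigendata back) is indeed the standard route in that literature, so the plan is reasonable; note only that the RPF theorem you invoke on the induced system is not the ``classical'' compact-alphabet one but the countable-full-shift version, which itself needs the summability you mention.

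The concrete problem is the transport of the eigenfunction. The forward hitting-time formula $h(x)=e^{S_{\tau(x)}f(x)-\tau(x)P(f)}\,\tilde{h}(\sigma^{\tau(x)}x)$ does not satisfy $L_f h=\lambda h$: the eigen-equation reconstructs $h(x)$ from the values of $h$ at the \emph{preimages} of $x$, so $h$ must be built from $\tilde{h}$ by a backward first-passage (renewal) series such as $h=\sum_{n\ge 0}\lambda^{-n}L_f^n\bigl(\chi_{[a]\cap\{\phi>n\}}\tilde{h}\bigr)$, for which a short computation using $L_{\tilde f}\tilde{h}=\tilde{h}$ gives $L_f h=\lambda h$. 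With your formula, a preimage $y\notin[a]$ of $x$ satisfies $h(y)=e^{f(y)-P(f)}h(x)$, so its contribution to $L_fh(x)$ is $e^{2f(y)-P(f)}h(x)$, and the sum over preimages does not close up to $e^{P(f)}h(x)$; moreover $\tau(x)=\infty$ on the (in general nonempty, e.g.\ in the full shift) set of points whose orbit never enters $[a]$, so the formula is not even defined everywhere. In addition, the three genuinely hard ingredients are only asserted, not proved: (i) summable variations of $\tilde{f}$ and $\sum_{\ell}e^{\sup\tilde{f}|_{[\ell]}}<\infty$, which is where BIP, finite Gurevich pressure and positive recurrence enter and where Daon's passage from summable variations to Walters regularity requires real work; (ii) convergence, positivity, continuity and two-sided bounds for the series defining $h$, together with conservativity and cylinder-finiteness of the extended $\nu$; and (iii) the uniform-on-compacts convergence $\lambda^{-n}L_f^n\chi_{[a]}\to h\,\nu([a])$, which is a renewal-theorem/spectral-gap argument in its own right and is exactly the part of the statement this paper later uses (Lemma \ref{propo1} and Lemma \ref{propo2}), so it cannot be dismissed as a corollary of ``exponential mixing'' without proof.
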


\noindent
{For a prove of the theorems \ref{teoremaSarig} and \ref{ruelle}, see \cite{Daon} and \cite{Sarig03}.
 
{We call an invariant measure $\mu$ supported {on the} shift space $\Sigma_A(\mathbb{N})$ a
\textit{Gibbs measure for $f$} (in the sense of Bowen) if there exist 
%a function $g:\Sigma_A(\mathbb{N}) \rightarrow \mathbb{R}$ and 
constants $C_1,C_2>0$ such that

\begin{equation}\label{Gibbseq}
C_1\leq \frac{\mu[x_0\ldots x_{n-1}]}{\exp(S_n f(x)-nP(f))}\leq C_2
\end{equation}
for all $x\in \Sigma_A(\mathbb{N})$.}

In the zero temperature case is considered the family of functions $\beta f$ and the possible limits of the above objects when $\beta \to \infty$. The thermodynamic interpretation of the parameter $\beta$ is as the inverse temperature of the system.

A  $\sigma$-invariant probability $\mu$ is said to be maximizing to $f$ if
$$m(f):=\int fd\mu=\sup_{\nu \in \mathcal{M}_{\sigma}(\Sigma_A(\mathbb{N}))}\int f d\nu.$$
%The main problem in Ergodic Optimization is to guarantee
%the existence and to describe the maximizing measures
%for the system.
 After {Jenkinson's lecture notes}
 \cite{Jenkinson}
the study of maximizing measures became known as
Ergodic Optimization.  
The main conjecture in the
area, until recently, was that for an expanding 
transformation the maximizing measure of a generic
H\"older or Lipschitz {function} is supported on a single periodic
orbit. After partial results \cite{CLT,AQuas}, 
the conjecture was proved by G. Contreras \cite{GC}. 

We will show that {$f$ has at least one maximizing measure} and any accumulation point of $\mu_\beta=\mu_{\beta f}$ when $\beta\to +\infty$ is maximizing to $f$ ({see \cite{JMU0,Morris} for  proofs} when $f$ has summable variation). 
In \cite{BG} is proved, for a locally H\"older continuous and coercive potential on 
 a finitely primitive shift, the existence of maximizing measure, moreover,
 their support lies in a particular Markov subshift on a finite
 alphabet. Recently, {
 	in} \cite{BF} (see Theorem $\ref{Bissacot}$) {is} proved that for a coercive potential with summable variation  over a topologically transitive shift there exists a maximizing 
 measure whose support lies on a subshift on a finite alphabet.  

The existence of the above limit  is not a
 straightforward problem. In \cite{Bremont} is proved the existence of the limit $\mu_\infty$ for locally constant potentials on a finite topologically mixing
Markov shift. This results were extended in \cite{Leplaideur} to equilibrium states
 $\mu_{\beta f+g}$, where $f$ is locally constant and g is H\"older continuous. However in \cite{Chazottes} is shown an example of H\"older function $f$ where $\mu_{\beta f}$ does not converge. In \cite{Kempton} is proved the convergence of $\mu_{\beta f}$ for a locally constant potential $f$ on a countable shift with the BIP property.

\vspace{0.5cm}

%In the analysis of the zero temperature case we propose to show results about sub-actions and Large Deviations. 

A \textit{sub-action} (for the potential $f$) is a  function $V\in C^0(\Sigma_A(\mathbb{N}))$ satisfying
\begin{equation}\label{ec01}
f(x)+V(x)-V(\sigma(x)) -m(f)\leq 0, \ \ \forall  x\in \Sigma_A(\mathbb{N}).
\end{equation}
We say that the sub-action is \textit{calibrated} if for each $y\in \Sigma_A(\mathbb{N})$ there exists $x\in \sigma^{-1}(y)$ verifying the equality in (\ref{ec01}).
We will show that, in the zero temperature limit, the eigenfunctions $h_\beta = h_{\beta f}$ for the Ruelle operator $L_{\beta f}$ will correspond to calibrated  sub-actions,
that is, any accumulation function of the family $\frac{1}{\beta}\log(h_\beta)$, with respect to the topology of convergence in compact sets, is a bounded calibrated sub-action.

We say a family of probabilities $\mu_\beta$, $\beta>0$, satisfies a \textit{Large Deviation Principle} if there exists a function $I:\Sigma_A \rightarrow \mathbb{R}\cup \infty$, (the deviation function) which is non-negative, lower
semi-continuous, and such that for any cylinder $C$, 
$$\lim_{\beta \rightarrow \infty}\frac{1}{\beta}\log \mu_\beta(C)=-\inf_{x\in C}I(x).$$

If the maximizing measure is unique then we will show that any two bounded calibrated sub-actions differ by a constant, this in particular show us that the rate function $I$ \text{in} Theorem \ref{teoremaA} is well defined. 

\section{Ergodic Optimization and the zero temperature limit}

In all this section  we suppose that $\Sigma_A(\mathbb{N})$ is a sub-shift finitely primitive and $f:\Sigma_A(\mathbb{N})\to\mathbb{R}$  satisfies the Walters condition, with $\operatorname{Var}_1(f)<\infty$. and $P(f)<\infty$.
In this case, for each $\beta >1 $ we have $P(\beta f)<\infty$. Following \cite{Sarig03}, from our hypothesis, there exists a constant $C_\beta>0$ such that $ C_{\beta}^{-1}<h_\beta<C_\beta$, where  $h_\beta$ is the eigenfuntion for the Ruelle Operator $L_{\beta f}$ associated with the eigenvalue $e^{P(\beta f)}$. In this way we can assume that the eigenmeasure $\nu_{\beta}$ is a probability and the eigenfunction is normalized from $\int h_\beta\, d\nu_\beta =1$. The probability $\mu_\beta$ defined from $d\mu_\beta = h_\beta d\nu_\beta$ is  the $\sigma-$invariant Gibbs measure of $\beta f$. As we will see below, if $\beta>1$ then $\mu_\beta$ is also the equilibrium measure of $\beta f$.

{The following lemma appears as a hypothesis in \cite{JMU0}. Here it is a consequence of our hypothesis.}
 
\begin{lemma}\label{lemmasumabble} 
 $$\sum_{i\in \mathbb{N}}\exp(\sup f|_{[i]})<\infty.$$
 Particularly, $f$ is bounded above and coercive, that is, $\displaystyle{\lim_{i\to\infty} \sup f|_{[i]} = -\infty}.$

\end{lemma}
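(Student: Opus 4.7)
My plan uses the Ruelle operator machinery from Theorem~\ref{ruelle}. Let $h$ and $\lambda = e^{P(f)}$ be the eigenfunction and eigenvalue; a key technical ingredient (due to Sarig under Assumption~I and BIP, cf.\ \cite{Sarig03, Daon}) is that $h$ is in fact \emph{uniformly} bounded above and below by positive constants, say $C^{-1} \leq h \leq C$ on all of $\Sigma_A(\mathbb{N})$. (Theorem~\ref{ruelle} as stated gives only positivity and continuity, but this two-sided uniform bound is standard in Sarig's framework and is precisely what the paper uses later when it records $C_{\beta}^{-1} < h_\beta < C_\beta$.)

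Write $iy := (i, y_0, y_1, \ldots)$ for an admissible concatenation. The eigen-equation $L_f h = \lambda h$ applied at $y$ reads
$$\sum_{i\,:\,A(i, y_0) = 1} e^{f(iy)} h(iy) \;=\; \lambda\, h(y),$$
and combined with $C^{-1} \leq h \leq C$ this yields
$$\sum_{i\,:\,A(i, y_0) = 1} e^{f(iy)} \;\leq\; \lambda C^2.$$
Because $\operatorname{Var}_1(f) < \infty$ and $iy \in [i]$, we have $f(iy) \geq \sup f|_{[i]} - \operatorname{Var}_1(f)$, whence
$$\sum_{i\,:\,A(i, y_0) = 1} e^{\sup f|_{[i]}} \;\leq\; e^{\operatorname{Var}_1(f)}\, \lambda\, C^2.$$

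To cover every admissible symbol, I invoke the BIP (finite primitivity) property: there is a finite set $B \subset \mathbb{N}$ such that for each admissible $i$ one can find $b_1, b_2 \in B$ with $b_1 i b_2$ admissible; in particular $A(i, b_2) = 1$. Choosing one base point $y^{(b)} \in [b]$ for each $b \in B$ and summing the previous bound,
$$\sum_{i \in \mathbb{N}} e^{\sup f|_{[i]}} \;\leq\; \sum_{b \in B}\;\sum_{i\,:\,A(i, b) = 1} e^{\sup f|_{[i]}} \;\leq\; |B|\, e^{\operatorname{Var}_1(f)}\, \lambda\, C^2 \;<\; \infty.$$
Coercivity $\sup f|_{[i]} \to -\infty$ is then immediate: the terms of a convergent series of positive numbers tend to zero. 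Boundedness above, $\sup f = \sup_i \sup f|_{[i]} < \infty$, is equally immediate since each term is dominated by the total sum.

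The only genuinely delicate step is the uniform bound $C^{-1} \le h \le C$; once it is in hand, everything reduces to one application of $L_f h = \lambda h$ together with the BIP covering. If one preferred a proof that avoids invoking this boundedness of $h$, an alternative route is to estimate $e^{n P(f)}$ from below by summing contributions of periodic orbits of length $\sim K_0+1$ that pass through $[i]$ and otherwise stay inside the finite alphabet $B$ (such orbits exist by the BIP property); this gives the same conclusion up to a multiplicative constant.
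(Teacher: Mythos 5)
Your primary argument is correct as a calculation, but it rests on a premise that risks circularity, and this is exactly why the paper chooses a more elementary route. The two-sided uniform bound $C^{-1}\le h\le C$ is, in Sarig's and Daon's development, established for BIP shifts hand in hand with (or after) the summability $\sum_i \exp(\sup f|_{[i]})<\infty$: that summability is precisely what guarantees the Ruelle operator $L_f$ has finite norm, $\|L_f\|_0\le \sum_i \exp(\sup f|_{[i]})$, which enters the construction of the RPF triple and the Gibbs bounds from which the uniform boundedness of $h$ is read off. So invoking the bound on $h$ in order to derive the summability is liable to run in a circle unless you pin down an independent proof of that bound. The paper sidesteps this entirely: its proof never touches the Ruelle operator or the eigenfunction. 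It uses only the definition of the Gurevich pressure (finiteness of $\sum_{\sigma^{n+1}x=x}e^{S_{n+1}f(x)}\chi_{[x_1]}$ for large $n$), the finitely primitive structure to build periodic orbits of the form $\overline{a\,x_1\cdots x_n}$ with $x_1,\ldots,x_n$ in the fixed finite alphabet $\mathbb{F}$, and the Walters condition plus $\operatorname{Var}_1(f)<\infty$ to control the $n$ tail terms of $S_{n+1}f$ uniformly in $a$. That is the argument you sketch in your last paragraph as an ``alternative route,'' and it is in fact the paper's actual proof; in this bottom-up development it is the one that should come first, precisely because it makes the later eigenfunction estimates available without circularity. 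Your bookkeeping in the main argument (the eigen-equation rearrangement, the $\operatorname{Var}_1$ comparison between $f(iy)$ and $\sup f|_{[i]}$, the BIP covering via $A(i,b)=1$ for some $b\in B$) is all fine; the issue is purely that you are borrowing a stronger fact than the lemma itself, and that fact's standard proof depends on the lemma.
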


\begin{proof} %(Lemma \ref{lemmasumabble})

For each $a\in \mathbb{N}$ and $n\geq K_0$ there exist $x_1,x_2,\ldots,x_n \in \mathbb{F}$
such that the periodic sequence $\overline{ax_1\cdots x_n}$ with period $ax_1x_2x_3\ldots x_n$ is admissible. For each fixed $x_1,\ldots,x_n \in \mathbb{F}$ denote by $X(x_1,\ldots,x_n)$ the set of symbols $a\in \mathbb{N}$ such that $\overline{ax_1\cdots x_n}$ is admissible.
Using that $P(f)$ is finite, {for each fixed $x_1 \in \mathbb{F}$ there exists a natural number $n$ large enough such that$$\sum_{\sigma^{n+1}(x)=x}\exp(S_{n+1}f(x))\chi_{[x_1]}(x)<\infty.$$
As $\mathbb{F}$ is a finite set we can fix a large $n$ independent of the choice of $x_1$.}

This implies that $$\sum_{a\in X(x_1,\ldots,x_n)} \exp(S_{n+1}f(\overline{ax_1\cdots x_n}))<\infty$$ for each $x_1,...,x_n \in \mathbb{F}$.
Thus
\begin{equation*}\label{ec62}
\sum_{a\in X(x_1,\ldots,x_n)}\exp\big[ f(\overline{ax_1\ldots x_n})+f(\overline{x_1\cdots  x_n a})+\cdots
+f(\overline{x_nax_1\cdots x_{n-1}})\big]<\infty.
\end{equation*}
As $ x_1,\ldots,x_n \in \mathbb{F}$, $f$ satisfies the Walters condition and $\operatorname{Var}_1(f)<\infty$, then $$f(\overline{x_1\cdots x_n a})+\cdots\\
+f(\overline{x_nax_1\cdots x_{n-1}})$$ is bounded by a constant that does not depend of $a,x_1,\ldots,x_n$.
%\begin{align*}
%&\{f(b_iab_jl_1\ldots): a\in C(b_i,b_j)\} \ \text{is bounded, since } \ \operatorname{Var}_1(f)<\infty.\\
%&\{f(b_jl_1\ldots l_{K_0}b_ia\ldots): a\in C(b_i,b_j)\}\  \text{is bounded, since}\ \operatorname{Var}_{K_0+2}(f)<\infty.\\
%\vdots\\
%&\{f(l_{K_0}b_ia\ldots): a\in C(b_i,b_j)\}  \ \text{is bounded, since} \  \operatorname{Var}_{2}(f)<\infty.
%\end{align*}

Therefore %from $(\ref{ec62})$ results
$$\sum_{a\in X(x_1,\ldots,x_n)}\exp(f(\overline{ax_1\cdots x_n}))<\infty,$$
and multiplying by $e^{\operatorname{Var}_1(f)}$ we get
$$\sum_{a\in X(x_1,\ldots,x_n)}\exp(\sup f|_{[a]})<\infty.$$
Each $a\in \mathbb{N}$ appears in at least one of the sets $ X(x_1,\ldots,x_n)$
therefore summing over the finite set of sequences $x_1,\ldots,x_n\in \mathbb{F}$ we get
$$\sum_{a\in \mathbb{N}}\exp(\sup f|_{[a]})<\infty.$$
\end{proof}

%\begin{definition}(Peierl's Barrier)
%The Peierl's barrier between $x$ and $y$ is defined by
%$$h_f(x,y):=\lim_{\epsilon \rightarrow 0}\limsup_{n\rightarrow \infty }\sup_{\substack{z:d(x,z)<\epsilon\\ %\sigma^n(z)=y}}\{S_n(f-m(f))(z)\}$$
%\end{definition}

%There are three limits commonly studied in temperature zero, that is, 
%\[\lim_{\beta \to \infty} \frac{P(\beta f)}{\beta}, \,\,\,\,\lim_{\beta \to \infty} \mu_\beta\,\,\,\, \text{and} %\,\,\,\,\lim_{\beta\to\infty}\frac{1}{\beta}\log(h_\beta).\]
% The characterization of the third one will be studied in this section, but the characterization of the first two of then is simple even in the non compact case as shows the following lemma.   

Now we study the family $\frac{1}{\beta}\log(h_\beta)$. The initial objective is to show that can be applied the Arzel\`{a}-Ascoli theorem for this family. We will prove that the accumulation points of this family as $\beta \to\infty$  are calibrated sub-actions for $f$. We start with the following lemma:

\begin{lemma}\label{propo1}
Given $k\geq 1$, for any $x,y \in \Sigma_A(\mathbb{N})$
such that $d(x,y)\leq r^{k}$  we have
$$h_\beta(x)\leq h_\beta(y)e^{\beta M_k},$$
where $M_k:=\sup_{n\geq 1}\operatorname{Var}_{n+k}S_nf$.
\end{lemma}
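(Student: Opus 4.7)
The plan is to exploit the eigenfunction equation together with Theorem \ref{ruelle}, setting up a termwise comparison between preimages of $x$ and of $y$ under iterates of $\sigma$.

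Since $h_\beta$ is the eigenfunction of $L_{\beta f}$ with eigenvalue $\lambda_\beta = e^{P(\beta f)}$, for any admissible symbol $a\in\mathbb{N}$ Theorem \ref{ruelle} gives
\begin{equation*}
h_\beta(x)\,\nu_\beta([a]) \;=\; \lim_{n\to\infty} \lambda_\beta^{-n}\,(L_{\beta f}^n \chi_{[a]})(x) \;=\; \lim_{n\to\infty} \lambda_\beta^{-n}\!\!\!\sum_{\substack{\sigma^n(z)=x \\ z_0 = a}}\!\!\! e^{\beta S_n f(z)},
\end{equation*}
and similarly for $y$. The point of going through the iterated Ruelle operator (rather than using $L_{\beta f} h_\beta = \lambda_\beta h_\beta$ directly) is that it lets us bypass the $h_\beta(z)$ factor, which we cannot control by itself.

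Next I would set up the key bijection. A preimage $z$ of $x$ under $\sigma^n$ starting with $a$ has the form $z = (a, z_1, \ldots, z_{n-1}, x_0, x_1, \ldots)$, where the word $a z_1 \cdots z_{n-1} x_0$ is $A$-admissible. Since $d(x,y)\leq r^k$ with $k\geq 1$ forces $x_0 = y_0$, the same word remains admissible when we replace the tail by $y$, producing $z' = (a, z_1, \ldots, z_{n-1}, y_0, y_1, \ldots) \in \sigma^{-n}(y)\cap[a]$. This gives a bijection between $\sigma^{-n}(x)\cap[a]$ and $\sigma^{-n}(y)\cap[a]$.

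The decisive estimate then comes from the Walters condition: since $z$ and $z'$ agree on their first $n+k$ coordinates,
\begin{equation*}
|S_n f(z) - S_n f(z')| \;\leq\; \operatorname{Var}_{n+k} S_n f \;\leq\; M_k,
\end{equation*}
so $e^{\beta S_n f(z)} \leq e^{\beta M_k}\, e^{\beta S_n f(z')}$ for each matched pair. Summing over preimages, dividing by $\lambda_\beta^n$, passing to the limit, and finally dividing by $\nu_\beta([a])>0$ yields $h_\beta(x) \leq e^{\beta M_k}\, h_\beta(y)$, as desired.

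I do not foresee any serious obstacle: the only mild subtlety is being explicit about the fact that $M_k<\infty$ (which is exactly the Walters hypothesis) and that the bijection of preimages is legitimate because $x_0=y_0$ makes the admissibility condition on the word $a z_1\cdots z_{n-1} x_0$ identical to the one on $a z_1 \cdots z_{n-1} y_0$.
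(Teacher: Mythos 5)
Your proposal is correct and follows essentially the same route as the paper. The paper introduces the set $P^n(x)$ of admissible $n$-prefixes and observes $P^n(x)=P^n(y)$ once $x_0=y_0$, which is exactly your bijection between $\sigma^{-n}(x)\cap[a]$ and $\sigma^{-n}(y)\cap[a]$; the termwise Walters estimate $S_nf(px)-S_nf(py)\le \operatorname{Var}_{n+k}S_nf\le M_k$ and the passage through $\lambda_\beta^{-n}L_{\beta f}^n\chi_{[a]}\to h_\beta\,\nu_\beta([a])$ are also identical.
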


\begin{proof}
We will prove that $h_\beta(x)- h_\beta(y)e^{\beta M_k}\leq 0.$
Given  $x\in \Sigma_A(\mathbb{N})$ we define
 $$P^n(x)=\{a=(a_0,\ldots,a_{n-1}):ax \in \Sigma_A(\mathbb{N})\}.$$
By hypothesis $d(x,y)<1$, and so $P^n(x)=P^{n}(y)$.

Since  $\lim\limits_{n\rightarrow \infty}\frac{1}{\lambda_\beta^n}L^n_{\beta f}\chi_{[a]}(x)=h_\beta(x)\nu_\beta([a])$
uniformly in compacts (see Theorem \ref{ruelle}), we have

\begin{align*}
(h_\beta(x)- h_\beta(y)e^{\beta M_k})\nu_\beta([a])&=%\lim_{n\rightarrow \infty}\frac{1}{\lambda_\beta^n}L^n_{\beta f}\chi_{[a]}(x)-\lim_{n\rightarrow \infty}\frac{1}{\lambda_\beta^n}L^n_{\beta f}\chi_{[a]}(y)e^{\beta M_k}\\
%&=
\lim_{n\rightarrow \infty} \frac{1}{\lambda_\beta^n}\big(L^n_{\beta f}\chi_{[a]}(x)-L^n_{\beta f}\chi_{[a]}(y)e^{\beta M_k} \big)\\
&=\lim_{n\rightarrow \infty} \frac{1}{\lambda_\beta^n} \bigg(\sum_{{p}\in [a]\cap P^n(x)} e^{\beta S_nf({p}x)}-e^{\beta S_nf({p}y)+\beta M_k}\bigg)\\
&=\lim_{n\rightarrow \infty} \frac{1}{\lambda_\beta^n} \sum_{{p}\in [a]\cap P^n(x)} e^{\beta S_nf({p}y)}\big[e^{\beta S_nf({p}x)-\beta S_nf({p}y)}-e^{\beta M_k}\big].
\end{align*}

Finally, we observe that for any $n\geq 0$,
$$
e^{\beta S_nf({p}x)-\beta S_nf({p}y)}-e^{\beta M_k}\leq 0 ,
$$
because by definition of $M_k$,
$$
S_nf({p}x)-S_nf({p}y)\leq \operatorname{Var}_{n+k}S_nf\leq  M_k.
$$
Thus, we conclude the proof.

\end{proof}

\begin{corollary}\label{variacaoh}
 For each $k$, $\operatorname{Var}_k(\frac{1}{\beta}\log(h_\beta)) $ is bounded by a constant $M_k$ that does not depend of $\beta$, such that $\lim\limits_{k\to\infty}M_k = 0$ .
\end{corollary}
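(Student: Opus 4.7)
The plan is to read this off directly from Lemma \ref{propo1} by a symmetry argument. Suppose $x,y \in \Sigma_A(\mathbb{N})$ satisfy $x_0=y_0,\ldots,x_{k-1}=y_{k-1}$, so that $d(x,y)\leq r^{k}$. Lemma \ref{propo1} gives $h_\beta(x) \leq h_\beta(y)\, e^{\beta M_k}$, and since the roles of $x$ and $y$ are interchangeable (the condition $d(x,y)\leq r^k$ is symmetric), we also get $h_\beta(y) \leq h_\beta(x)\, e^{\beta M_k}$. Taking logarithms, dividing by $\beta>0$, and using $h_\beta>0$, we obtain
\begin{equation*}
\Bigl|\tfrac{1}{\beta}\log h_\beta(x) - \tfrac{1}{\beta}\log h_\beta(y)\Bigr| \leq M_k.
\end{equation*}
Taking the supremum over all pairs $(x,y)$ agreeing on the first $k$ coordinates yields $\operatorname{Var}_k\bigl(\tfrac{1}{\beta}\log h_\beta\bigr) \leq M_k$, with $M_k$ independent of $\beta$.

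For the second assertion, I would invoke the Walters condition on $f$: by definition, $M_k = \sup_{n\geq 1}\operatorname{Var}_{n+k}S_nf$ is finite for every $k\geq 1$ and $\lim_{k\to\infty} M_k = 0$, which is precisely the limit required. There is no real obstacle here; the only thing to notice is that Lemma \ref{propo1} was stated in a one-sided form but the symmetry of the hypothesis $d(x,y)\leq r^k$ immediately promotes it to a two-sided bound, which is what the variation $\operatorname{Var}_k$ measures.
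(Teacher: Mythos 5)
Your proof is correct and follows the same route the paper indicates: apply Lemma \ref{propo1} in both directions (using the symmetry of the hypothesis $d(x,y)\leq r^k$) to get the two-sided bound on $\frac{1}{\beta}\log h_\beta$, then invoke the Walters condition to conclude $M_k\to 0$. The paper merely states this is "a consequence of the above lemma and of the Walters condition," and you have filled in exactly those details.
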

\begin{proof}
It is a consequence of the above lemma and of the Walters condition.
\end{proof}

\begin{lemma}\label{propo2}  There exist constants $C_1$ and $C_2$ such that for $\beta\geq 1$
\begin{equation*}
e^{C_1 \beta}\leq h_\beta \leq e^{C_2 \beta}.
\end{equation*}
\end{lemma}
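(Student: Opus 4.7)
The plan is to uniformly bound $V_\beta := \frac{1}{\beta}\log h_\beta$ between two $\beta$-independent constants, using four ingredients: (i) the variation bound $\operatorname{Var}_k(V_\beta) \leq M_k$ from Corollary \ref{variacaoh}; (ii) the iterated Ruelle equation $\lambda_\beta^n h_\beta = L_{\beta f}^n h_\beta$; (iii) the normalization $\int h_\beta\,d\nu_\beta = 1$; and (iv) the finitely primitive (BIP) property. First I would establish that $|\log\lambda_\beta|/\beta$ is uniformly bounded for $\beta \geq 1$: the upper bound follows from $\lambda_\beta \leq \|L_{\beta f}\|_0 \leq \sum_i e^{\beta \sup f|_{[i]}}$ together with the inequality $e^{\beta \sup f|_{[i]}} \leq e^{(\beta-1)\sup f}\,e^{\sup f|_{[i]}}$ (valid for $\beta \geq 1$ since $\sup f < \infty$ by Lemma \ref{lemmasumabble}), giving $\log\lambda_\beta \leq B\beta$; for the lower bound, evaluating the Gurevich-pressure formula at a periodic orbit $z$ of period $p$ with defining word in $\mathbb{F}$ (which exists by finite primitivity) gives $\log\lambda_\beta \geq \beta S_p f(z)/p \geq -B'\beta$, since $|S_p f(z)|/p$ is bounded by $z$ visiting only symbols in $\mathbb{F}$ and $\operatorname{Var}_1(f) < \infty$.

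The core of the argument is a uniform pointwise comparison: for any $x_1, x_2 \in \Sigma_A(\mathbb{N})$, I want $h_\beta(x_1)/h_\beta(x_2) \in [e^{-E\beta}, e^{E\beta}]$ for a $\beta$-independent $E$. Given $x_i$, the finitely primitive property produces a preimage $y_i = (i_1^{(i)}, \ldots, i_{K_0}^{(i)}, (x_i)_0, (x_i)_1, \ldots) \in \sigma^{-K_0}(x_i)$ with all $i_k^{(i)} \in \mathbb{F}$; since each $\sigma^k y_i$ for $0 \leq k < K_0$ begins with a symbol in the finite set $\mathbb{F}$ and $\operatorname{Var}_1(f) < \infty$, one has $|S_{K_0}f(y_i)| \leq K_0 D$ uniformly. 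A single-term estimate in $\lambda_\beta^{K_0}h_\beta(x_i) \geq e^{\beta S_{K_0}f(y_i)}h_\beta(y_i)$ together with step 1 gives $h_\beta(x_i)/h_\beta(y_i) \in [e^{-A\beta}, e^{A\beta}]$. To compare the reference values $h_\beta(y_1)$ and $h_\beta(y_2)$, prepend to $y_2$ a length-$K_0$ word in $\mathbb{F}$ connecting $(y_1)_0$ to $(y_2)_0$, producing $\tilde y \in [(y_1)_0]$ with $\sigma^{K_0+1}\tilde y = y_2$; applying Ruelle once more (with $|S_{K_0+1}f(\tilde y)|$ again bounded uniformly since $\tilde y$ spends its first $K_0+1$ steps in $\mathbb{F}$) and invoking Lemma \ref{propo1} to compare $h_\beta(y_1)$ and $h_\beta(\tilde y)$ in the common cylinder $[(y_1)_0]$ yields the uniform cross-cylinder bound, and chaining concludes the step.

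Finally, I invoke $\int h_\beta\,d\nu_\beta = 1$: since $\nu_\beta$ is a probability, $\sup h_\beta \geq 1$ and $\inf h_\beta \leq 1$, hence there exist $z^{**}$ with $h_\beta(z^{**}) \geq 1/2$ and $z_{*}$ with $h_\beta(z_{*}) \leq 2$. Applying the pointwise comparison to $x$ against each of $z^{**}$ and $z_{*}$ yields, for every $x$,
\begin{equation*}
\tfrac{1}{2}\,e^{-E\beta} \leq e^{-E\beta} h_\beta(z^{**}) \leq h_\beta(x) \leq e^{E\beta} h_\beta(z_{*}) \leq 2\,e^{E\beta},
\end{equation*}
which gives the claimed bounds $e^{C_1\beta} \leq h_\beta \leq e^{C_2\beta}$ for suitable constants $C_1, C_2$ and all $\beta \geq 1$. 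The main obstacle is the uniformity of the cross-cylinder comparison in the second step: one must verify that the Birkhoff sums along BIP-connecting paths between arbitrary $\mathbb{F}$-cylinders are bounded by a constant depending only on $\mathbb{F}$, $K_0$, $\sup f$, and $\operatorname{Var}_1(f)$ (independent of the particular cylinders or of $\beta$), which is precisely where the combination of finite subalphabet $\mathbb{F}$, the Walters condition, and $\operatorname{Var}_1(f) < \infty$ plays a central role.
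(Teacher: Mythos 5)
There is a genuine gap in the second step of your plan. You claim that the single-term estimate $\lambda_\beta^{K_0}h_\beta(x_i)\geq e^{\beta S_{K_0}f(y_i)}h_\beta(y_i)$, together with the bound on $|\log\lambda_\beta|/\beta$, gives the two-sided bound $h_\beta(x_i)/h_\beta(y_i)\in[e^{-A\beta},e^{A\beta}]$. It only gives one side: picking out one summand from $\lambda_\beta^{K_0}h_\beta(x_i)=\sum_{w\in\sigma^{-K_0}(x_i)}e^{\beta S_{K_0}f(w)}h_\beta(w)$ yields the lower bound $h_\beta(x_i)\geq e^{-A\beta}h_\beta(y_i)$, but it says nothing about an upper bound, since the omitted terms are nonnegative and their total size is exactly what you are trying to control. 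The same directionality issue infects the comparison of $h_\beta(y_1)$ with $h_\beta(y_2)$: your $\tilde y$ is a $\sigma^{K_0+1}$-preimage of $y_2$, so Ruelle gives only $h_\beta(y_2)\geq e^{-A'\beta}h_\beta(\tilde y)$, i.e.\ an upper bound on $h_\beta(\tilde y)$ in terms of $h_\beta(y_2)$. Combined with Lemma~\ref{propo1} this yields $h_\beta(y_1)\leq e^{(M_1+A')\beta}h_\beta(y_2)$ but no lower bound, and when you try to close the chain
\begin{equation*}
\frac{h_\beta(x_1)}{h_\beta(x_2)}=\frac{h_\beta(x_1)}{h_\beta(y_1)}\cdot\frac{h_\beta(y_1)}{h_\beta(y_2)}\cdot\frac{h_\beta(y_2)}{h_\beta(x_2)},
\end{equation*}
the first and third factors are each controlled in only one direction, and those two directions conflict. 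So the uniform two-sided comparison $h_\beta(x_1)/h_\beta(x_2)\in[e^{-E\beta},e^{E\beta}]$ is not established by your argument, and the final step that sandwiches $h_\beta$ between $e^{-E\beta}h_\beta(z^{**})$ and $e^{E\beta}h_\beta(z_*)$ does not follow.

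The paper sidesteps exactly this difficulty by treating the two bounds asymmetrically and never invoking a global two-sided ratio bound. For the lower bound, it takes a point $y$ with $h_\beta(y)\geq 1/2$, builds a $(K_0+1)$-preimage $z=y_0w_1\ldots w_{K_0}x$ of $x$ lying in the cylinder $[y_0]$, and uses only the one-sided Ruelle inequality $h_\beta(x)\geq\lambda_\beta^{-(K_0+1)}e^{\beta S_{K_0+1}f(z)}h_\beta(z)$ plus Lemma~\ref{propo1}. For the upper bound, it first bounds $h_\beta$ on the finitely many cylinders $[a]$, $a\in\mathbb F$, by running the one-sided Ruelle inequality in the other direction (a preimage $z'=aw_1\ldots w_{K_0}y$ of a point $y$ with $h_\beta(y)\leq 2$), and then, crucially, extends the bound from $\bigcup_{a\in\mathbb F}[a]$ to all of $\Sigma_A(\mathbb N)$ by comparing the full Birkhoff sums term by term in $\lambda_\beta^{-n}L^n_{\beta f}\chi_{[b]}$ against reference points $z^a\in[a]$, $a\in\mathbb F$, and passing to the limit via the Generalized Ruelle--Perron--Frobenius convergence. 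That last step, which gives $h_\beta(x)\leq\sum_{a\in\mathbb F}e^{\beta M}h_\beta(z^a)$ for \emph{arbitrary} $x$, is the ingredient your plan is missing; without it the upper bound is only available on cylinders indexed by $\mathbb F$, and the attempt to reach arbitrary $x$ through a two-sided pointwise comparison cannot be completed by the single-term Ruelle estimate alone.
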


\begin{proof}
%inferior
Fix $x\in \Sigma_A(\mathbb{N})$ and a point $y=(y_0,y_1,\ldots)\in \Sigma_A(\mathbb{N})$ satisfying $h_\beta(y)\geq 1/2$. Using that $\Sigma_A(\mathbb{N})$ is finitely primitive
we know that there exist $w_1,\ldots,w_{K_0}\in \mathbb{F}$ such that $y_0w_1\ldots w_{K_0}x_0$ is admissible. Let $z=y_0w_1\ldots w_{K_0}x$.
Then $d(y,z)<1$ and from Lemma $\ref{propo1}$
we have
\begin{equation*}\label{cap3-ec10}
\frac{1/2}{h_\beta(z)}\leq\frac{h_\beta(y)}{h_\beta(z)}\leq e^{\beta M_1}.
\end{equation*}
Therefore, if $\beta\geq 1$,
\begin{align*}
h_\beta(x)=\frac{(L^{K_0+1}_{\beta f}h_\beta)(x)}{\lambda^{K_0+1}_\beta}&\geq e^{\beta S_{K_0+1}(f)(z)-(K_0+1)P(\beta f)}h_\beta(z)\\
&\geq e^{\beta S_{K_0+1}(f)(z)-(K_0+1)\beta P(f)} \frac{e^{-\beta M_1}}{2} \\
%&\geq e^{\beta S_{K_0+1}(f)(z)-(K_0+1)\beta P(f)}e^{-lob(2)}e^{-\beta \operatorname{Var}(f)}\\
&\geq e^{\beta S_{K_0+1}(f)(z)+\beta C},\\
\end{align*}
where $C=-(K_0+1)P(f) -M_1-\log(2)$ is a constant.

As $f$ satisfies the Walters condition and $\operatorname{Var}_1(f)<\infty$ we can find $x' \in \mathbb{F}^{\mathbb{N}}$  such that
\begin{align*}
S_{K_0+1}(f)(z)&=f(y_0w_0\ldots w_{K_0-1}x)+f(w_0\ldots w_{K_0-1}x)+\cdots +f(w_{K_0-1}x) \\
&\geq f(y_0w_0\ldots w_{K_0-1}x')+\cdots+
f(w_{K_0-1}x')-\sup_{k\geq 1}\operatorname{Var}_{k+1}S_kf-\operatorname{Var}_1(f)\\
&\geq (K_0+1)m-\sup_{k\geq 1}\operatorname{Var}_{k+1}S_kf-\operatorname{Var}_1(f) =:\tilde{C},
\end{align*}
where
$$m:=\min\{f(w)\,| w \in [\{y_0\}\cup \mathbb{F}]^{\mathbb{N}}\}> -\infty.$$
We conclude that
\begin{equation*}
e^{\beta (C+\tilde{C})}<h_\beta(x),
\end{equation*}

\bigskip

Now we prove the other inequality. Let $y$ be a point satisfying $h_\beta(y)\leq 2$ and fix $a\in \mathbb{F}$.  From the finitely primitive assumption, there exist $w_1,\ldots, w_{K_0}\in \mathbb{F}$ such that $aw_1\ldots w_{K_0}y_0$  is admissible.
 Fix $x\in[a]$ and define
 $z':=aw_1\ldots w_{K_0}y$. Thus $d(z',x)<1$  and from Lemma  $\ref{propo1}$
we get
\begin{equation*}\label{cap3-ec11}
\frac{h_\beta(x)}{h_\beta(z')}\leq e^{\beta M_1}.
\end{equation*}
Then
\begin{align*}
h_\beta(y)&=\frac{(L^{K_0+1}_{\beta f}h_\beta)(y)}{\lambda^{K_0+1}_\beta}\geq e^{\beta S_{K_0+1}(f)(z')-(K_0+1) P(\beta f)}h_\beta(z')\\
&\geq e^{\beta S_{K_0+1}(f)(z')-(K_0+1)\beta P(f)} h_\beta(x)e^{-\beta M_1}.\\
\end{align*}
Therefore
\[h_\beta(x) \leq 2e^{ -\beta S_{K_0+1}(f)(z')+(K_0+1)\beta P(f) +\beta M_1}.\]
As $a, w_1,\ldots,w_{K_0}\in \mathbb{F}$, following computations as above, for $\beta\geq 1$,

$$h_\beta(x)< e^{\beta \bar{C_2}},$$
for some constant $\bar{C_2}$ depending of $a$ but not of $x$. 
As $\mathbb{F}$ is finite we can suppose this inequality true for any $a\in \mathbb{F}$ and any $x\in[a]$.

%-----------------------------------------------------------------------------------------------------

Fix a point $z^a\in [a]$, for each $a\in \mathbb{F}$. Given any $x\in \Sigma_A$, for every point of the form
$(\xi_0,\ldots,\xi_{n-1},x)$ we can find some $a \in \mathbb{F}$ such that
$(\xi_0,\ldots,\xi_{n-1},z^a) \in \Sigma_A$.
As $f$ satisfies the Walters condition and $\operatorname{Var}_1(f)<\infty$,
\begin{align*}
&S_nf(\xi_0,\ldots,\xi_{n-1},x)-S_nf(\xi_0,\ldots,\xi_{n-1},z^a)\\
&=(S_{n-1}f(\xi_0,\ldots,\xi_{n-1},x)-S_{n-1}f(\xi_0,\ldots,\xi_{n-1},z^a))+(f(\xi_{n-1}x)-f(\xi_{n-1}z^a))\\
&\leq \sup_{k\geq 1}\operatorname{Var}_{k+1}S_k f+\operatorname{Var}_1(f)=:M.
\end{align*}

Then
$$e^{\beta S_nf(\xi_0,\ldots,\xi_{n-1},x)}\leq e^{\beta M}e^{\beta S_nf(\xi_0,\ldots,\xi_{n-1},z^a)}$$
and summing over all possible $\xi$ and $a's$ we get {for any fixed $b\in\mathbb{N}$} 
$$\lambda^{-n}_{\beta}(L^n_{\beta f} \chi_{[b]})(x)\leq \sum_{a\in F} e^{\beta M}\lambda^{-n}_{\beta}(L^n_{\beta f} \chi_{[b]})(z^a).$$
Taking the limit when $n\rightarrow \infty$ we obtain
\begin{equation*}
h_\beta(x)\nu_\beta([b])\leq \sum_{a\in F} e^{\beta M}h_\beta(z^a)\nu_\beta([b]).
\end{equation*}
Using that $h_\beta(z^a)\leq e^{\beta \bar{C_2}} \ \text{for every} \ a\in \mathbb{F}$, we conclude the existence of a constant $C_2$ such that for $\beta\geq 1$
$$h_\beta(x)\leq e^{\beta C_2}.$$
\end{proof}

\begin{corollary}\label{equicontinuous}
 The sequence $V_\beta:=\frac{1}{\beta}\log h_\beta$ is equicontinuous and
uniformly bounded. Particularly, $V_\beta$ has a subsequence that converges uniformly in each compact set of $\Sigma_A(\mathbb{N})$. 
\end{corollary}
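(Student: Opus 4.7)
The proof is essentially an assembly of what has already been established. First, uniform boundedness follows directly from Lemma \ref{propo2}: for $\beta \geq 1$, applying $\frac{1}{\beta}\log(\cdot)$ to the inequalities $e^{C_1\beta} \leq h_\beta \leq e^{C_2\beta}$ yields $C_1 \leq V_\beta(x) \leq C_2$ for every $x \in \Sigma_A(\mathbb{N})$ and every $\beta \geq 1$, so the family $\{V_\beta\}_{\beta \geq 1}$ is uniformly bounded by constants independent of both $\beta$ and $x$.

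For equicontinuity, I would invoke Corollary \ref{variacaoh}, which provides $\operatorname{Var}_k(V_\beta) \leq M_k$ with $M_k \to 0$ as $k \to \infty$, independently of $\beta$. Given $\varepsilon > 0$, choose $k_0$ with $M_{k_0} < \varepsilon$. Since $d(x,y) \leq r^{k_0}$ forces $x_i = y_i$ for $0 \leq i \leq k_0$, one has $|V_\beta(x) - V_\beta(y)| \leq \operatorname{Var}_{k_0}(V_\beta) \leq M_{k_0} < \varepsilon$ for every $\beta \geq 1$. This is a modulus of continuity that does not depend on $\beta$, which is exactly uniform equicontinuity of the family.

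For the extraction of a subsequence converging uniformly on compact sets, the plan is to apply the Arzel\`a--Ascoli theorem in its form for separable metric spaces, since $\Sigma_A(\mathbb{N})$ is Polish but generally not locally compact. Given any sequence $\beta_n \to \infty$, fix a countable dense set $\{y_j\}_{j\geq 1}$; by uniform boundedness, a Cantor diagonal argument produces a subsequence $(V_{\beta_{n_k}})$ such that $V_{\beta_{n_k}}(y_j)$ converges in $\mathbb{R}$ for every $j$. Now fix a compact set $K \subset \Sigma_A(\mathbb{N})$ and $\varepsilon > 0$: by equicontinuity pick $k_0$ with $M_{k_0} < \varepsilon/3$, and cover $K$ by finitely many cylinders of length $k_0 + 1$ (i.e.\ balls of radius $r^{k_0}$), each containing some $y_{j_\ell}$. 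A standard three-$\varepsilon$ argument combining pointwise convergence at $y_{j_1}, \ldots, y_{j_N}$ with the uniform variation bound $M_{k_0}$ then yields uniform convergence of $V_{\beta_{n_k}}$ on $K$.

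No genuine obstacle arises: Corollary \ref{variacaoh} and Lemma \ref{propo2} do all the work. The only mild subtlety is the lack of local compactness of $\Sigma_A(\mathbb{N})$, which forces the convergence to be stated only on compact subsets rather than globally, but this is exactly what the corollary asserts.
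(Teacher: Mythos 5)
Your proof is correct and follows exactly the route the paper takes: uniform boundedness from Lemma \ref{propo2}, equicontinuity from the uniform variation bound of Corollary \ref{variacaoh}, and a Cantor-diagonal Arzel\`a--Ascoli argument on the separable space $\Sigma_A(\mathbb{N})$; the paper simply states these three ingredients without unpacking them. (One cosmetic nit: $d(x,y)\le r^{k_0}$ gives $x_i=y_i$ for $0\le i< k_0$, not $0\le i\le k_0$, but this has no effect since $\operatorname{Var}_{k_0}$ only requires the first $k_0$ coordinates to agree.)
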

\begin{proof} It is a consequence of Corollary \ref{variacaoh}, Lemma \ref{propo2} and the Arzel\`{a}-Ascoli theorem to separable spaces.
\end{proof}

\begin{Remark}
In the absence of the BIP assumption the eigenfunction $h_\beta$
could be closer to zero and/or the eigenmesure $\nu_\beta(\Sigma_A(\N))=\infty$
(see example 2 in \cite{Sarig01}).
\end{Remark}

\bigskip

Now, we prove the main result of this section: 

\begin{proposition} \label{propo-subaction}
 Any possible limit function
  $V(x):=\lim\limits_{\beta_i \rightarrow \infty}\frac{1}{\beta_i}\log h_{\beta_i}(x)$ is a bounded calibrated sub-action for the potential $f$.
\end{proposition}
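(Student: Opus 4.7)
The plan is to start from the Ruelle eigenfunction equation $L_{\beta f}h_\beta=\lambda_\beta h_\beta$ with $\lambda_\beta=e^{P(\beta f)}$, which, written in terms of $V_\beta:=\frac{1}{\beta}\log h_\beta$, becomes
$$\frac{1}{\beta}\log\sum_{\sigma(y)=x}e^{\beta(f(y)+V_\beta(y))}=V_\beta(x)+\frac{P(\beta f)}{\beta}. \qquad (\star)$$
Boundedness of $V$ is immediate from Lemma \ref{propo2}: $C_1\le V_\beta\le C_2$ uniformly in $\beta\ge 1$ forces $C_1\le V\le C_2$. For the sub-action inequality, each summand in $(\star)$ is bounded by the exponential of the right-hand side, so
$$f(y)+V_\beta(y)-V_\beta(x)\le \frac{P(\beta f)}{\beta}\quad\forall\,y\in\sigma^{-1}(x).$$
Passing to the limit along the subsequence $\beta_i\to\infty$ of Corollary \ref{equicontinuous}, using pointwise convergence $V_{\beta_i}\to V$ together with the standard variational-principle fact $P(\beta f)/\beta\to m(f)$, yields $f(y)+V(y)-V(\sigma(y))\le m(f)$ for every $y$.

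For calibration, I would normalize $(\star)$ and rewrite it as
$$1=\sum_{\sigma(y)=x}e^{\beta(f(y)+V_\beta(y)-V_\beta(x)-P(\beta f)/\beta)}.$$
Indexing preimages by their first coordinate $y=iy^{(i)}$, I split this sum into a finite head ($i<N$) plus a tail ($i\ge N$). Using coercivity of $f$ (Lemma \ref{lemmasumabble}), the uniform bounds $C_1\le V_\beta\le C_2$, and the identity $e^{\beta\sup f|_{[i]}}=e^{\sup f|_{[i]}}\cdot e^{(\beta-1)\sup f|_{[i]}}$ valid for $\beta\ge 1$, the tail is dominated by a constant times $e^{(\beta-1)\sup_{i\ge N}\sup f|_{[i]}}\sum_i e^{\sup f|_{[i]}}$. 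Choosing $N$ so large that $\sup f|_{[i]}$ is very negative for $i\ge N$ makes the tail less than any prescribed $\varepsilon>0$ for all sufficiently large $\beta$.

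The head therefore exceeds $1-\varepsilon$, and since it has at most $N$ positive terms, its largest exponent satisfies
$$\max_{i<N,\,A(i,x_0)=1}\beta\bigl(f(iy^{(i)})+V_\beta(iy^{(i)})-V_\beta(x)-P(\beta f)/\beta\bigr)\ge\log(1-\varepsilon)-\log N.$$
Dividing by $\beta$ and letting $\beta_i\to\infty$, the right-hand side tends to zero while the left converges, along the finite admissible head, to $\max_{i<N,\text{ adm.}}(f(iy^{(i)})+V(iy^{(i)})-V(x)-m(f))$. This quantity is therefore $\ge 0$; combined with the already proven sub-action inequality (each term $\le 0$), it must equal zero, and being a maximum over a finite set it is attained at some $i^*$. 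The corresponding preimage $y^*=i^*y^{(i^*)}$ realizes the equality defining calibration.

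The main obstacle is the tail estimate in the calibration step. In the compact (finite-alphabet) setting $\sigma^{-1}(x)$ is finite, so log-sum-exp asymptotics trivially select a maximizing preimage; in the countable alphabet one must rule out that mass escapes to infinity in the Ruelle equation as $\beta\to\infty$. Three ingredients conspire to make this work: coercivity of $f$, the uniform two-sided bounds for $V_\beta$ (Lemma \ref{propo2}), and summability $\sum_i e^{\sup f|_{[i]}}<\infty$ (Lemma \ref{lemmasumabble}), combined with the splitting of $e^{\beta\sup f|_{[i]}}$ that transfers summability from $\beta=1$ uniformly to all $\beta\ge 1$.
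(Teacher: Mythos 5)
Your proposal is correct and takes essentially the same route as the paper: both rely on the Ruelle eigenequation, the uniform two-sided bounds on $V_\beta=\frac{1}{\beta}\log h_\beta$ from Lemma~\ref{propo2}, the limit $P(\beta f)/\beta\to m(f)$, the one-summand-$\le$-sum comparison for the sub-action inequality, and a head/tail split of the preimage sum with the $\beta\mapsto(\beta-1)$ exponent-splitting trick (so that Lemma~\ref{lemmasumabble} controls the tail uniformly in $\beta\ge1$) to locate a calibrating preimage in the finite head. The paper phrases it as proving the identity $m(f)+V(x)=\sup_{j}\bigl(f(jx)+V(jx)\bigr)$ rather than working with the normalized equation $1=\sum_{\sigma(y)=x}e^{\beta(f(y)+V_\beta(y)-V_\beta(x)-P(\beta f)/\beta)}$, but the technical content is the same.
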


\begin{proof}

%From remark $\ref{remark1}$ and the Theorem $\ref{teo1}$,
There exists a subsequence such that $\frac{1}{\beta_n}\log h_{\beta_n}$ converges uniformly
in every compact set $K\subset \Sigma_A(\mathbb{N})$. %, that is
%
%\begin{equation*}
%\lim_{n \rightarrow \infty}\frac{1}{\beta_n}\log h_{\beta_n}(x):=V(x),\ \forall x\in  K\subset \Sigma_A(\mathbb{N}) \ \text{compact},
%\end{equation*}

Abusing of the notation we write
$$V(x):=\lim_{\beta \rightarrow \infty}\frac{1}{\beta}\log h_{\beta}(x).
$$
Then $h_\beta(z)=e^{\beta(V(z)+ \epsilon_\beta(z))}$, with
$\lim\limits_{\beta\rightarrow \infty}|\epsilon_\beta|=0$ uniformly in compacts.
Also, as there exists a constant $C$ such that for $\beta \geq 1$
\[e^{-\beta C} \leq h_\beta \leq e^{\beta C}\] we conclude that $-C\leq V\leq C$ and consequently $-2C\leq \epsilon_\beta \leq 2C$ for any $\beta\geq 1$.

We want to show that $V$ is a calibrated sub-action. In this way we
fix  $x \in \Sigma_A(\mathbb{N})$. In order to simplify the arguments below we will write the sums in all $j\geq 0$ instead of to take the sum only on $j's$ such that $jx_0$ is admissible.  From Lemma \ref{lemmasumabble} there exists $j_0$ large enough such that
\begin{equation*}\label{cap3-ec6}
\sum_{j=j_0+1}^\infty e^{f(jx)}<1
\end{equation*}
and consequently $f$ is coercive. Then, as $V$ and $\epsilon_\beta$ are bounded, there exists $j_1$ large enough such
that
\begin{equation*} \label{cap3-ec7}
\sup_{j\in \mathbb{N}} (f(jx)+V(jx))= \max_{0\leq j\leq j_1-1}(f(jx)+V(jx))=:\alpha
\end{equation*}
and for any $\beta\geq 1$
\begin{equation*} \label{cap3-new}
\sup_{j\in \mathbb{N}} (f(jx)+V(jx)+\epsilon_\beta(jx))= \max_{0\leq j\leq j_1-1}(f(jx)+V(jx)+\epsilon_\beta(jx))=:\alpha_\beta.
\end{equation*}
Particularly
\[\lim_{\beta\to\infty}\alpha_\beta = \lim_{\beta\to\infty}\max_{0\leq j\leq j_1-1}(f(jx)+V(jx)+\epsilon_\beta(jx)) =\lim_{\beta\to\infty}\max_{0\leq j\leq j_1-1}(f(jx)+V(jx))=\alpha.\] 

Writing
\begin{equation*}
\sum_{j=0}^\infty e^{\beta f(jx)}h_\beta(jx)=\sum_{j=0}^{j_0} e^{\beta f(jx)}h_\beta(jx)+\sum_{j=j_0+1}^\infty e^{\beta f(jx)}h_\beta(jx)
\end{equation*}
we have
$$\sum_{j=0}^{j_0} e^{\beta f(jx)}h_\beta(jx)=\sum_{j=0}^{j_0} e^{\beta f(jx)}e^{\beta(V(jx)+ \epsilon_\beta(jx))}\leq (j_0+1)e^{\beta \alpha_\beta}$$
and
\begin{align*}
\sum_{j=j_0+1}^\infty e^{\beta f(jx)}h_\beta(jx)&=\sum_{j=j_0+1}^\infty e^{\beta\big(f(jx)+V(jx)\big)+\beta \epsilon_\beta(jx)}\\
&=\sum_{j=j_0+1}^\infty e^{f(jx)+(\beta-1)\big[f(jx)+V(jx)+\epsilon_\beta(jx)\big]+V(jx)+ \epsilon_{\beta}(jx)}\\
&\leq \bigg(\sum_{j=j_0+1}^\infty e^{f(jx)}\bigg)e^{(\beta-1)(\alpha_\beta)+C} \\
&<1\cdot e^{(\beta-1)(\alpha_\beta)+C}.
\end{align*}

Thus, we obtain
\[\sum_{j=0}^\infty e^{\beta f(jx)}h_\beta(jx)\leq (j_0+1)e^{ \beta \alpha_\beta } +  e^{(\beta-1)(\alpha_\beta)+C} \]
and so
\begin{align*}
m(f)+V(x) & = \lim_{\beta \to \infty}\frac{1}{\beta}\log(\lambda_\beta\cdot h_\beta(x))=     \lim_{\beta \to \infty}\frac{1}{\beta}\log \left(\sum_{j=0}^\infty e^{\beta f(jx)}h_\beta(jx)\right)\\
&\leq  \max\left\{\lim_{\beta\to\infty} \frac{1}{\beta}\log\left( (j_0+1)e^{\beta\alpha_\beta}\right) , \lim_{\beta\to\infty} \frac{1}{\beta}\log\left(  e^{(\beta-1)(\alpha_\beta)+C}\right)\right\}\\
&=\alpha=\sup_{j\in \mathbb{N}} (f(jx)+V(jx)).
\end{align*}

In order to prove the other inequality we observe that if $$(f+V)(i_0x)=\max\limits_{0\leq j\leq j_1-1}(f+V)(jx)=\sup_{j\in \mathbb{N}} (f(jx)+V(jx)),$$ 

then
\begin{align*}
V(x) +m(f) &= \lim_{\beta\to\infty} \frac{1}{\beta} \log\left(\sum_{i=0}^\infty e^{\beta f(ix)}h_\beta(ix)\right)\\
&\geq \lim_{\beta\to\infty} \frac{1}{\beta} \log\left(e^{\beta f(i_0x)}h_\beta(i_0x)\right)\\
&=  f(i_0x)+V(i_0x)=\sup_{j\in \mathbb{N}} (f(jx)+V(jx)).
\end{align*}

Consequently $V$ is a calibrated sub-action.
\end{proof}

Now we study the probability $\mu_\beta$ defined from $d\mu_\beta = h_\beta d\nu_\beta$. For $\beta \geq 1$ it is the $\sigma-$invariant Gibbs measure of $\beta f$. We prove below the existence of an exponential control of the constants in equation (\ref{Gibbseq})  with the parameter $\beta$. This will be very important in the proof of a Large Deviation Principle.

\begin{lemma}\label{gibbsinequality}
	There exist constants $\eta_1$ and $\eta_2$ such that for any $\beta > 1$, $x = (x_0,x_1,x_2,...)\in \Sigma_A$ and $m\geq 1$
	\begin{equation*}
	e^{\beta \eta_1}\leq \frac{\mu_\beta[x_0\cdots x_{m-1}]}{e^{\beta S_m(f(x)) - mP(\beta f)}} \leq e^{\beta \eta_2}.
	\end{equation*}
\end{lemma}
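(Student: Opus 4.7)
The strategy is to express $\mu_\beta$ on cylinders through the Ruelle duality and the relation $\mu_\beta=h_\beta\nu_\beta$, then track the $\beta$-dependence of every factor. Using $L_{\beta f}^{*}\nu_\beta=e^{P(\beta f)}\nu_\beta$ with $g=h_\beta\chi_{[x_0\cdots x_{m-1}]}$,
\begin{equation*}
\mu_\beta([x_0,\ldots,x_{m-1}])=e^{-mP(\beta f)}\int L_{\beta f}^{m}\bigl(h_\beta\chi_{[x_0\cdots x_{m-1}]}\bigr)(z)\,d\nu_\beta(z).
\end{equation*}
The inner sum collapses: the sole $w\in[x_0\cdots x_{m-1}]$ with $\sigma^m w=z$ is $w=x_0\cdots x_{m-1}z$, and this requires $A(x_{m-1},z_0)=1$. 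Writing $A_{x_{m-1}}:=\{z:A(x_{m-1},z_0)=1\}$, the integrand reduces to $\chi_{A_{x_{m-1}}}(z)\,e^{\beta S_m f(x_0\cdots x_{m-1}z)}h_\beta(x_0\cdots x_{m-1}z)$.

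Next I would invoke the two regularity ingredients already exploited in Lemma \ref{propo2}: the Walters condition with $k=1$ together with $\operatorname{Var}_1(f)<\infty$ gives $|S_m f(x_0\cdots x_{m-1}z)-S_m f(x)|\leq M$, a bound independent of $m$, $z$ and the cylinder; and Lemma \ref{propo2} supplies $e^{\beta C_1}\leq h_\beta\leq e^{\beta C_2}$. Substituting,
\begin{equation*}
e^{\beta(C_1-M)}\nu_\beta(A_{x_{m-1}})\leq \frac{\mu_\beta([x_0\cdots x_{m-1}])}{e^{\beta S_m f(x)-mP(\beta f)}}\leq e^{\beta(C_2+M)}\nu_\beta(A_{x_{m-1}}).
\end{equation*}
Since $\nu_\beta(A_{x_{m-1}})\leq 1$, the upper estimate follows at once with $\eta_2:=C_2+M$.

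For the lower estimate, BIP provides $b_2\in B$ with $[b_2]\subseteq A_{x_{m-1}}$, so the task reduces to a uniform lower bound $\min_{b\in B}\nu_\beta([b])\geq e^{-\beta\cdot\text{const}}$ valid for all $\beta\geq 1$. This is the main obstacle, as it forces control of $\nu_\beta$ on the BIP cylinders independently of $\beta$. I would address it by a second application of Ruelle duality, this time at level $n=K_0+1$: for each $z$, finite primitivity produces $w(z)\in\mathbb{F}^{K_0}$ with $bw(z)z\in\Sigma_A$, giving $L_{\beta f}^{K_0+1}\chi_{[b]}(z)\geq e^{\beta S_{K_0+1}f(bw(z)z)}$. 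Every iterate $\sigma^i(bw(z)z)$ with $0\leq i\leq K_0$ begins with a symbol in the finite set $\{b\}\cup\mathbb{F}$, and $\operatorname{Var}_1(f)<\infty$ combined with Lemma \ref{lemmasumabble} makes $f$ bounded on each such cylinder, yielding $S_{K_0+1}f(bw(z)z)\geq -C_0$ with $C_0$ independent of $z$ and $\beta$. Combined with the elementary estimate $P(\beta f)\leq \log\sum_{i\in\mathbb{N}}e^{\beta\sup f|_{[i]}}\leq \beta P^*$ for $\beta\geq 1$ (again from Lemma \ref{lemmasumabble} and $\sup f<\infty$), this produces $\nu_\beta([b])\geq e^{-\beta((K_0+1)P^*+C_0)}$, hence an admissible $\eta_1:=C_1-M-(K_0+1)P^*-C_0$.
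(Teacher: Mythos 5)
Your proof is correct. The paper reaches the same conclusion but by a different bookkeeping: it starts from $\mu_\beta[x_0\cdots x_{m-1}]=\int \lambda_\beta^{-(m+K_0)}L^{m+K_0}_{\beta f}\bigl(h_\beta\chi_{[x_0\cdots x_{m-1}]}\bigr)\,d\nu_\beta$, so the $K_0$ extra iterates build the BIP connector directly into the cylinder computation; the lower bound then keeps only one connector word with symbols in $\mathbb{F}$, and the upper bound dominates the sum over all connector words by $e^{-C_1\beta}L^{K_0}_{\beta f}h_\beta$, using $\int\lambda_\beta^{-K_0}L^{K_0}_{\beta f}h_\beta\,d\nu_\beta=1$. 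You instead iterate only $m$ times, which collapses the inner sum to the single preimage $x_0\cdots x_{m-1}z$ and leaves the factor $\nu_\beta(A_{x_{m-1}})$, and you then bound that factor separately: trivially by $1$ above, and below via a second application of the eigenmeasure equation at level $K_0+1$ on $\chi_{[b]}$. The two routes use exactly the same toolbox (Ruelle duality, the bounds $e^{\beta C_1}\leq h_\beta\leq e^{\beta C_2}$, the Walters plus $\operatorname{Var}_1$ comparison of Birkhoff sums, finite primitivity, and $P(\beta f)\leq\beta\cdot\text{const}$ for $\beta\geq 1$), but your separation into a level-$m$ Gibbs identity plus a uniform-in-$\beta$ lower bound on $\nu_\beta$ of BIP cylinders is more modular, and your upper bound is slightly tighter since it yields $\eta_2=C_2+M$ rather than the paper's $C_2-C_1+M_2$. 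One cosmetic point: for the linear-in-$\beta$ pressure bound you could simply cite $P(\beta f)\leq\beta P(f)$ for $\beta\geq 1$ (a direct consequence of the variational principle, used verbatim in the paper), which avoids the step of showing $\log\sum_i e^{\beta\sup f|_{[i]}}\leq\beta P^*$.
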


\begin{proof}
	We will follow ideas from \cite{Sarig03}. From Lemma  $\ref{propo2}$, there exist constants $C_1$ and $C_2$ such that
	$e^{\beta C_1}	\leq h_\beta \leq e^{\beta C_2}$. Let $M_2:=[\operatorname{Var}_1(f)+\sup_{n\geq 1}\operatorname{Var}_{n+1}S_nf]$ and let $\alpha$ be the minimal value of $f$ over the compact set $F^{\mathbb{N}}$. Then 
%		jointly with the lemma $\ref{propo2}$. }
	\begin{align*}
	\mu_\beta[x_0\ldots x_{m-1}]&=\int \lambda_\beta^{-(m+K_0)}(L^{m+K_0}_{\beta f} h_\beta\cdot1_{[x_0\ldots x_{m-1}]})(z)d\nu_\beta(z)\\
	&\geq e^{C_1 \beta}\int \lambda_\beta^{-(m+K_0)} (L^{m+K_0}_{\beta f} 1_{[x_0\ldots  x_{m-1}]})(z)d\nu_\beta(z)\\
	&= e^{C_1 \beta}\int \lambda_\beta^{-(m+K_0)} \sum_{w_0...w_{K_0-1}}e^{S_{m+K_0}(\beta f)(x_0...x_{m-1}w_0...w_{K_0-1}z)} d\nu_\beta(z).
	\end{align*}
	(the summation above  is over all possible words $w_0...w_{K_0-1}$ such that $x_{m-1}w_0...w_{K_0-1}z$ is admissible)
		\begin{align*}
	\geq
	e^{C_1\beta}\lambda_\beta^{-(m+K_0)}&\int  \sum_{w_0...w_{K_0-1}}e^{S_{m}(\beta f)(x_0...x_{m-1}w_0...w_{K_0-1}z)} e^{S_{K_0}(\beta f)(w_0...w_{K_0-1}z)} d\nu_\beta(z)\\
	&\geq e^{\beta C_1 } \lambda_\beta^{-(m+K_0)}e^{S_m(\beta f)(x)+ K_0\beta \alpha-2\beta M_2 }
	\end{align*}
	
	In this way
	\[\frac{\mu_\beta[x_0\ldots x_{m-1}]}{e^{S_{m}(\beta f)(x)-mP(\beta f)}}\geq e^{\beta (C_1-2M_2  +K_0\alpha-K_0P(f))},\]
	where we use that $P(\beta f)\leq \beta P(f)$ if $\beta>1$.
	
	Besides, similarly we obtain 
	\begin{align*}
	\mu_\beta[x_0\ldots x_{m-1}] &\leq  e^{C_2 \beta}\int \lambda_\beta^{-(m+K_0)} \sum_{w_0...w_{K_0-1}}e^{S_{m+K_0}(\beta f)(x_0...x_{m-1}w_0...w_{K_0-1}z)} d\nu_\beta(z)\\
	%&\leq e^{C_2 \beta}e^{S_m(\beta f)(x) +\beta M_2}\lambda_\beta^{-m} \int \lambda_\beta^{-K_0} \sum_{w_0...w_{K_0-1}}e^{S_{K_0}(\beta f)(w_0...w_{K_0-1}z)} d\nu_\beta(z)\\
	&\leq e^{C_2 \beta}e^{S_m(\beta f)(x) +\beta M_2 }\lambda_\beta^{-m} \int \lambda_\beta^{-K_0} \sum_{i_0...i_{K_0-1}}e^{S_{K_0}(\beta f)(i_0...i_{K_0-1}z)} d\nu_\beta(z).
	\end{align*}
	
	(the last summation is over all possible words $i_1...i_{K_0-1}$ such that $ i_0...i_{K_0-1}z$ is admissible. This summation includes in general more terms than the summation on $w_0...w_{K_0-1}$)
	\begin{align*}
	&\leq e^{C_2 \beta} e^{S_m(\beta f)(x) +\beta M_2 }\lambda_\beta^{-m} e^{-C_1 \beta}\int \lambda_\beta^{-K_0} (L^{K_0}_{\beta f}h_\beta)(z) d\nu_\beta(z)\\
	&=e^{C_2 \beta}e^{S_m(\beta f)(x) +\beta M_2 }\lambda_\beta^{-m}e^{-C_1 \beta} \cdot 1.
	\end{align*}
	
	In this way
	\[\frac{\mu_\beta[x_0\ldots x_{m-1}]}{e^{S_m(\beta f)(x)-mP(\beta f)}} \leq e^{\beta (C_2-C_1+ M_2)}.\]
\end{proof}

\begin{corollary} If $\beta>1$ then the Gibbs measure $\mu_\beta$ is the equilibrium measure of $\beta f$.
\end{corollary}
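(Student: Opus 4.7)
The goal is to verify that $\mu_\beta$ achieves the supremum in the variational principle of Theorem \ref{teoremaSarig} applied to the potential $\beta f$. Note that $\sup(\beta f)=\beta\sup f<\infty$ by Lemma \ref{lemmasumabble}, so that theorem is available, and I must show that
$$ h(\mu_\beta) + \beta \int f\, d\mu_\beta = P(\beta f), \qquad \int f\, d\mu_\beta > -\infty.$$

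The first step is to use Lemma \ref{gibbsinequality} with $m=1$ to obtain $\mu_\beta[a]\leq e^{\beta\eta_2}e^{\beta\sup f|_{[a]}-P(\beta f)}$ for every admissible symbol $a$. Since $f$ is coercive by Lemma \ref{lemmasumabble} and $\operatorname{Var}_1(f)<\infty$, I have $|\inf f|_{[a]}|\leq|\sup f|_{[a]}|+\operatorname{Var}_1(f)$, so the Gibbs bound, combined with boundedness of $|t|e^{\beta t/2}$ as $t\to-\infty$, yields $\int|f|\,d\mu_\beta<\infty$. The same estimate, together with $-\mu_\beta[a]\log\mu_\beta[a]\leq \mu_\beta[a]\bigl(P(\beta f)-\beta\eta_1+\beta|\sup f|_{[a]}|\bigr)$, shows that the entropy of the partition $\mathcal{P}_1$ into $1$-cylinders is finite, $H_{\mu_\beta}(\mathcal{P}_1)<\infty$.

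The second step is to take logarithms in Lemma \ref{gibbsinequality} and integrate with respect to $\mu_\beta$. Since $\int\log\mu_\beta[x_0\cdots x_{m-1}]\,d\mu_\beta(x)=-H_{\mu_\beta}(\mathcal{P}_m)$ and $\int S_m f\,d\mu_\beta=m\int f\,d\mu_\beta$ by $\sigma$-invariance, I obtain
$$ \beta\eta_1\;\leq\;-H_{\mu_\beta}(\mathcal{P}_m)-\beta m\int f\,d\mu_\beta+mP(\beta f)\;\leq\;\beta\eta_2 .$$
Dividing by $m$ and letting $m\to\infty$ yields $h(\mu_\beta,\mathcal{P}_1)=P(\beta f)-\beta\int f\,d\mu_\beta$, where the entropy $h(\mu_\beta,\mathcal{P}_1)=\lim_m\frac{1}{m}H_{\mu_\beta}(\mathcal{P}_m)$ exists thanks to the finiteness of $H_{\mu_\beta}(\mathcal{P}_1)$. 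Because the cylinder partition $\mathcal{P}_1$ is a generator for the shift, $h(\mu_\beta,\mathcal{P}_1)=h(\mu_\beta)$. Hence $h(\mu_\beta)+\beta\int f\,d\mu_\beta=P(\beta f)$, and by Theorem \ref{teoremaSarig} the measure $\mu_\beta$ is an equilibrium state for $\beta f$.

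The main obstacle is the first step: in the non-compact setting neither $\int f\,d\mu_\beta>-\infty$ nor $H_{\mu_\beta}(\mathcal{P}_1)<\infty$ is automatic, and both are indispensable for the variational argument that follows. Both rely on the exponential decay of $\mu_\beta[a]$ provided by Lemma \ref{gibbsinequality} together with coercivity of $f$ and the assumption $\operatorname{Var}_1(f)<\infty$; once these are in place, the integration of the Gibbs inequality and passage to the limit are routine.
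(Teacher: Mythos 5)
Your proposal is correct and takes essentially the same route as the paper. The paper's proof is a one-line citation to Mauldin--Urba\'nski and Morris, whose argument (adapted to the present setting) is precisely what you have written out: use Lemma \ref{gibbsinequality} to get $\int f\,d\mu_\beta>-\infty$ and $H_{\mu_\beta}(\mathcal{P}_1)<\infty$, then integrate the logarithmic form of the Gibbs inequality, divide by $m$, and pass to the limit to obtain $h(\mu_\beta)+\beta\int f\,d\mu_\beta=P(\beta f)$, using that $\mathcal{P}_1$ is a finite-entropy generator so that Kolmogorov--Sinai applies. The only place worth being slightly more explicit is the summability $\sum_a\mu_\beta[a]\,|\sup f|_{[a]}|<\infty$: from the upper Gibbs bound this reduces to $\sum_a|t_a|e^{\beta t_a}<\infty$ with $t_a=\sup f|_{[a]}$, which follows from $\sum_a e^{t_a}<\infty$ (Lemma \ref{lemmasumabble}) after absorbing $|t_a|e^{(\beta-1)t_a}$ into a constant, valid since $\beta>1$ and $t_a\to-\infty$; this is exactly the role of the hypothesis $\beta>1$ in the corollary's statement and you use it correctly.
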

\begin{proof}
It follows from the same arguments of \cite{MU} and \cite{Morris}  using our hypothesis and the above lemma.
\end{proof}

\begin{lemma}  \label{pressure}
	-
	
	\vspace{0.2cm}
	
	\noindent
	1. \, $\displaystyle{\lim_{\beta \to\infty} \frac{P(\beta f)}{\beta} = m(f);}$

	\vspace{0.2cm}

    \noindent
	2. \, $\displaystyle{\lim_{\beta \to\infty} \int f\, d\mu_{\beta} = m(f);}$
	
	\vspace{0.2cm}

	\noindent
	3. \,The family $(\mu_\beta)_{\beta>1}$ has an accumulation point when $\beta \to\infty$ (weak* topology). Furthermore, if $\mu_{\beta_j}\to \mu_{\infty}$ then $\mu_\infty$ is maximizing to $f$.

\end{lemma}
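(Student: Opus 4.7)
My plan is to prove item 1 first by bracketing $P(\beta f)/\beta$ between $m(f) - o(1)$ and $m(f) + o(1)$, then deduce item 2 from item 1 via a convexity argument, and finally establish item 3 by tightness of $(\mu_\beta)_{\beta>1}$ combined with item 2. For the lower bound in item 1, given $\epsilon > 0$ I pick an invariant measure $\mu$ with $\int f\,d\mu > m(f) - \epsilon$; since $\int f\,d\mu > -\infty$ and $P(\beta f) < \infty$, the variational principle (Theorem \ref{teoremaSarig}) forces $h(\mu) < \infty$ and yields $P(\beta f) \geq h(\mu) + \beta\int f\,d\mu$, so $\liminf_{\beta\to\infty} P(\beta f)/\beta \geq m(f) - \epsilon$; letting $\epsilon \to 0$ gives the lower bound. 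For the upper bound, each periodic point $x$ of period $n$ determines the $\sigma$-invariant probability $\frac{1}{n}\sum_{j=0}^{n-1}\delta_{\sigma^j x}$, so $\frac{1}{n}S_n f(x) \leq m(f)$; for $\beta \geq 1$,
\begin{equation*}
e^{\beta S_n f(x)} = e^{S_n f(x)}\, e^{(\beta-1) S_n f(x)} \leq e^{S_n f(x)}\, e^{(\beta-1) n\, m(f)},
\end{equation*}
and summing over the periodic orbits contributing to $Z_n(\beta f)$ (with $x_0 = a$ fixed) gives $Z_n(\beta f) \leq e^{(\beta-1) n\, m(f)} Z_n(f)$; taking the Gurevich limit yields $P(\beta f) \leq (\beta - 1) m(f) + P(f)$, so $\limsup_{\beta\to\infty} P(\beta f)/\beta \leq m(f)$.

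For item 2, $g(\beta) := P(\beta f)$ is convex in $\beta$ on $(1,\infty)$ and, under the uniqueness of the equilibrium state (which holds for $\beta > 1$ by the corollary preceding the lemma), is differentiable there with $g'(\beta) = \int f\,d\mu_\beta$. Since $g'$ is non-decreasing, $L := \lim_{\beta\to\infty} g'(\beta)$ exists in $\mathbb{R}\cup\{+\infty\}$. Using $g(\beta) = g(\beta_0) + \int_{\beta_0}^\beta g'(t)\,dt$ together with the tangent inequality $g(\beta) \geq g(\beta_0) + g'(\beta_0)(\beta - \beta_0)$, one obtains both $\limsup g(\beta)/\beta \leq L$ and $\liminf g(\beta)/\beta \geq L - \epsilon$ for every $\epsilon > 0$; combined with item 1 this forces $L = m(f)$, i.e., $\int f\,d\mu_\beta \to m(f)$.

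For item 3, tightness of $(\mu_\beta)_{\beta>1}$ follows from Lemma \ref{gibbsinequality}: dividing the upper bound for $\mu_\beta([i])$ by the lower bound for $\mu_\beta([a])$ (with $a$ a fixed admissible symbol) and using $\mu_\beta([a]) \leq 1$ yields
\begin{equation*}
\mu_\beta([i]) \leq e^{\beta\bigl(\eta_2 - \eta_1 + \inf f|_{[i]} - \sup f|_{[a]}\bigr)}.
\end{equation*}
By coercivity (Lemma \ref{lemmasumabble}) the exponent is $\leq 0$ for all sufficiently large $i$, whence for $\beta \geq 1$ one has $\mu_\beta([i]) \leq e^{\eta_2 - \eta_1 + \inf f|_{[i]} - \sup f|_{[a]}}$, a bound summable in $i$ with tail vanishing uniformly in $\beta$. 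The $\sigma$-invariance of $\mu_\beta$ extends this to every coordinate, giving tightness in the Polish space $\Sigma_A(\mathbb{N})$; Prokhorov and passage to the weak* limit preserve the probability and $\sigma$-invariance, producing a $\sigma$-invariant probability measure $\mu_\infty$. Since $f$ is bounded above by some $M$ (Lemma \ref{lemmasumabble}), $M - f$ is continuous and non-negative, and the Portmanteau theorem yields $\int f\,d\mu_\infty \geq \limsup_j \int f\,d\mu_{\beta_j} = m(f)$ (using item 2 along the convergent subsequence); combined with $\int f\,d\mu_\infty \leq m(f)$, we conclude that $\mu_\infty$ is maximizing.

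The main obstacle is the identification $g'(\beta) = \int f\,d\mu_\beta$ underlying item 2: this tangent-slope description of the derivative of the pressure at $\beta f$ in terms of the unique equilibrium state is classical in the compact setting, but in the Walters/BIP countable-Markov-shift case it must be invoked from (or verified in the spirit of) the general theory, using the uniqueness of $\mu_\beta$ from the preceding corollary. Once this identification is granted, the remaining convex-analytic deduction is routine.
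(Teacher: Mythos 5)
Your proposal is correct, but it departs from the paper's argument at several points, most notably in item~2.

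For item~1, you prove upper and lower bounds separately: a direct estimate $Z_n(\beta f) \leq e^{(\beta-1)n\,m(f)} Z_n(f)$ on the Gurevich partition function using periodic-orbit measures, plus the variational principle for the lower bound. The paper instead observes that $\beta \mapsto P(\beta f) - \beta m(f)$ is non-increasing for $\beta > 1$ (by comparing the value of the variational functional at $\mu_{\beta f}$ against $P((\beta-\epsilon)f)$) and non-negative, hence bounded, which gives the same limit. Both are valid; your periodic-orbit bound is arguably more explicit.

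The more significant difference is in item~2. You invoke the identification $\frac{d}{d\beta}P(\beta f) = \int f\,d\mu_\beta$ and then run a convex-analysis argument, and you rightly flag the differentiability of the pressure as the point that needs to be imported from the general theory (it does hold in the Walters/BIP setting, but proving it is a nontrivial piece of Sarig's machinery). The paper avoids this entirely with an elementary argument: by applying the variational principle twice at $\beta_1 < \beta_2$, it shows directly that $\beta \mapsto \int f\,d\mu_\beta$ is increasing and $\beta \mapsto h(\mu_\beta)$ is decreasing (hence bounded, since $h \geq 0$); item~2 then falls out of item~1 via $\frac{P(\beta f)}{\beta} = \int f\,d\mu_\beta + \frac{h(\mu_\beta)}{\beta}$. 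This is more self-contained and sidesteps differentiability of the pressure altogether.

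For item~3, you give an explicit tightness argument derived from Lemma~\ref{gibbsinequality} (bounding $\mu_\beta([i])$ uniformly in $\beta$ by a summable sequence, then spreading to all coordinates via $\sigma$-invariance), followed by a Portmanteau argument applied to the non-negative continuous function $M - f$. The paper simply cites \cite{JMU0} for existence of the weak* accumulation point, and then handles the unboundedness of $f$ by truncating: it sets $f_n = \max\{f,-n\}$, uses the monotonicity from item~2 to get $\mu_{\beta_j}(f_n) > m(f) - \epsilon$ eventually, passes to the limit in $j$, and then applies the Monotone Convergence Theorem in $n$. Your Portmanteau step and the paper's truncation step are functionally equivalent; your tightness argument is a genuine (and correct) addition that the paper outsources to a citation. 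One small point: your inequality $\mu_\beta([i]) \leq e^{\beta(\eta_2 - \eta_1 + \inf f|_{[i]} - \sup f|_{[a]})}$ is obtained correctly by taking the sharper sides of the Gibbs bounds (the upper Gibbs bound at the infimizing point of $[i]$, the lower at the supremizing point of $[a]$), so the appearance of $\inf f|_{[i]}$ and $\sup f|_{[a]}$ is right, but this should be stated explicitly since the naive division would give $\sup f|_{[i]} - \inf f|_{[a]}$.
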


\vspace{0.3cm}

\begin{proof} 
We observe that for $0<\epsilon< \beta -1$,  
\begin{align*}
P((\beta-\epsilon)f) -(\beta -\epsilon)m(f) &\geq \int \beta f\,d\mu_{\beta f} - \int \epsilon f\, d\mu_{\beta f} +h(\mu_{\beta f}) -  (\beta -\epsilon)m(f)\\
&= P(\beta f) -\beta m(f) - \int \epsilon f\, d\mu_{\beta f} +\epsilon m(f)\\ 
& \geq P(\beta f) -\beta m(f).
\end{align*}
Then $\beta \mapsto [P(\beta f) -\beta m(f)], \beta >1$ is not increasing. As $P(\beta f) -\beta m(f) \geq 0$ we conclude that it is bounded.
Therefore
\[\lim_{\beta \to\infty} \frac{P(\beta f)}{\beta} = \lim_{\beta \to\infty} \frac{1}{\beta}(P(\beta f)-\beta m(f))+ m(f) = m(f).\]
This proves 1. 
\newline
 Fix $\beta_1<\beta_2$. Then
\[  \int \beta_2 f\, d\mu_{\beta_2} +h(\mu_{\beta_2}) >   \int \beta_2 f\, d\mu_{\beta_1} +h(\mu_{\beta_1})\]
and
\[\int \beta_1 f\, d\mu_{\beta_2} +h(\mu_{\beta_2}) <   \int \beta_1 f\, d\mu_{\beta_1} +h(\mu_{\beta_1}).  \]
Therefore 
\[\int (\beta_2 -\beta_1 ) f\, d\mu_{\beta_2} > \int (\beta_2 -\beta_1 ) f\, d\mu_{\beta_1}  \]
or equivalently 
\[ \int f\, d\mu_{\beta_2} > \int  f\, d\mu_{\beta_1}.     \]
This shows that $\mu_\beta(f)$ is increasing with $\beta$. %As it is bounded above by $m(f)$ there exists the limit $\alpha=\lim_{\beta \to \infty} \mu_\beta(f)$. So, given $\epsilon>0$ there exists $j_0$ such that for any $j\geq j_0$,
	%\[\mu_{\beta_j}(f) \in (\alpha - \epsilon, \alpha + \epsilon).\]
Particularly, if $\beta_1<\beta_2$,	
	\begin{align*}
	h(\mu_{\beta_{1}}) &= P(\beta_{1}f) - \beta_{1}\mu_{\beta_{1}}(f)\\
	&=P(\beta_{1}f) -P(\beta_2 f) + P(\beta_2 f) - \beta_{1}\mu_{\beta_{1}}(f)+\beta_{1}\mu_{\beta_2}(f)\\ &\hspace{1cm}  -\beta_{1}\mu_{\beta_2}(f) +\beta_2\mu_{\beta_2}(f)-\beta_2\mu_{\beta_2}(f)\\
	&>(\beta_{1}-\beta_{2})\mu_{\beta_2}(f) + P(\beta_2 f) - \beta_{1}[\mu_{\beta_{1}}(f) -\mu_{\beta_2}(f)]+\\
	& \hspace{1cm} + (\beta_2-\beta_{1})\mu_{\beta_2}(f)-\beta_2\mu_{\beta_2}(f)\\
	&= P(\beta_2 f) - \beta_{1}[\mu_{\beta_{1}}(f) -\mu_{\beta_2}(f)] -\beta_2\mu_{\beta_2}(f)\\
	&=h(\mu_{\beta_2}) - \beta_{1}[\mu_{\beta_{1}}(f) -\mu_{\beta_2}(f)] > h(\mu_{\beta_2}).
	\end{align*}
	%then 
	%\[h(\mu_{\beta_j})\leq h(\mu_{\beta_{j_0}}) + \beta_{j_0}(2\epsilon).\]
	It follows that ${h(\mu_{\beta})}$ is decreasing and bounded below by zero, therefore using
	item 1, we get  
	\[ m(f) = \lim_{\beta\to\infty} \frac{P(\beta f)}{\beta} =\lim_{\beta\to\infty} \int f\, d\mu_{\beta } + \frac{h(\mu_{\beta})}{\beta} = \lim_{\beta\to\infty} \int f\, d\mu_{\beta }. \]
This proves 2.	
\newline	
The proof of existence of the limit for a subsequence of $(\mu_\beta)_{\beta>1}$ follows the same arguments found in \cite{JMU0}. As $f$ is not bounded we can not say directly that  $\mu_{\beta_j}(f) \to \mu_{\infty}(f)$ in order to prove that $\mu_{\infty}(f)=m(f)$. For each $n$ define $f_n(x)=\max\{f(x),-n\}$. Given $\epsilon>0$ there exists $j_0$ such that $\mu_{\beta_{j_0}}(f)> m(f)-\epsilon$. It follows that for $j\geq j_0$ and for any $n$, $\mu_{\beta_{j}}(f_n)> m(f)-\epsilon$.  Then  $\mu_{\infty}(f_n) \geq m(f) - \epsilon$ and applying the Monotone Convergence Theorem we get $\mu_\infty(f) \geq m(f)-\epsilon$.

\end{proof}

\begin{corollary}
	There exists a maximizing measure to $f$.
\end{corollary}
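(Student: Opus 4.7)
The corollary is essentially immediate from item 3 of Lemma \ref{pressure}, which already established that any weak* accumulation point $\mu_\infty$ of the family $(\mu_\beta)_{\beta>1}$ exists and is maximizing for $f$. My plan is simply to apply that item: extract a subsequence $\beta_j \to \infty$ such that $\mu_{\beta_j} \to \mu_\infty$ in the weak* topology, and conclude that $\mu_\infty$ is the desired maximizing measure.

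There is no real work left for the corollary itself, since the two substantive points are already handled in the preceding lemma. For context, those two points are the following. First, one needs weak* relative compactness of $(\mu_\beta)_{\beta > 1}$, which in the non-compact setting is not automatic; tightness of the family would be the natural route, using the exponential Gibbs bounds of Lemma \ref{gibbsinequality} combined with the summability $\sum_i \exp(\sup f|_{[i]}) < \infty$ of Lemma \ref{lemmasumabble} to control the mass escaping to infinity uniformly in $\beta$. Second, one needs to pass the convergence $\mu_\beta(f) \to m(f)$ (item 2 of Lemma \ref{pressure}) to the weak* limit. Since $f$ is bounded above (by Lemma \ref{lemmasumabble}) but unbounded below, the functional $\mu \mapsto \mu(f)$ is not weak*-continuous, and this is the step where care is required.

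The hard part in the underlying lemma is exactly this last point, resolved by a standard truncation: set $f_n := \max(f, -n)$, which is continuous and bounded; weak* convergence gives $\mu_{\beta_j}(f_n) \to \mu_\infty(f_n)$ for each $n$, from which $\mu_\infty(f_n) \geq m(f) - \varepsilon$; then Monotone Convergence as $f_n \downarrow f$ (with the $f_n$ uniformly bounded above by $\sup f < \infty$) yields $\mu_\infty(f) \geq m(f)$, and hence equality. With that done in Lemma \ref{pressure}, the corollary reduces to a one-line invocation.
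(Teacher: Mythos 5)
Your proposal is correct and matches the paper exactly: the corollary is indeed stated as an immediate consequence of item 3 of Lemma \ref{pressure}, with no separate proof given, and your commentary on the two substantive points (tightness, handled in the paper by citing \cite{JMU0}, and the truncation $f_n=\max\{f,-n\}$ plus Monotone Convergence to pass $\mu_\beta(f)\to m(f)$ to the limit) is precisely the content of the paper's proof of that lemma.
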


Now we prove some additional results concerning sub-actions and maximizing measures.  
In order to study some properties of the sub-actions is usual consider the Ma\~n\'e potential
	$$S_f(x,y):=\lim_{\epsilon \rightarrow 0}\sup_{n\geq 1 }\sup_{\substack{z:d(x,z)<\epsilon\\ \sigma^n(z)=y}}\{S_n(f-m(f))(z)\}$$
and the	Aubry set
	$$\Omega(f,\sigma):=\{x\in\Sigma_A(\mathbb{N})\, |\, S_f(x,x) = 0\}. $$}

\begin{proposition}\label{propo-propiedades}
	Properties of the Ma\~n\'e potential: 
	\begin{enumerate}
		\item $S_f(x,y)\leq V(y)-V(x)$, for any bounded calibrated sub-action $V$ and $x,y\in \Sigma_A(\mathbb{N})$,
		\item $S_f(x,x)\leq 0$, for every $x\in \Sigma_A(\mathbb{N})$,
		\item $S_f(x,y)+S_f(y,z)\leq S_f(x,z)$ for any $x,y,z \in \Sigma_A(\mathbb{N})$,
		%\item For any $x\in \Omega(f,\sigma)$, $S_f(x,\cdot)>-\infty$.
		%\item For any  fixed $x$, $y \rightarrow S_f(x,y)$ is uniformly continuous,
		%  \item $h_f(x,\cdot)=S_f(x,\cdot)$ when $x\in \Omega(f,\sigma)$.
		% \item Assuming that $\Omega(f,\sigma)=\operatorname{supp}(\mu)$ we have that,
		% for $x\in \Omega(f,\sigma)$, $S_f(x,\cdot)$ is a calibrated subaction.
		%\item $\Omega(f,\sigma)=\{x\in \Sigma_A(\mathbb{N}): S_f(x,x)=0\}.$
		\item  $\operatorname{supp}(\mu) \subset \Omega(f,\sigma)$ for any $\mu\in \mathcal{M}_{\max}(f)$.
		\end{enumerate}
\end{proposition}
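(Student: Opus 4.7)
The plan is to treat the four items in sequence, exploiting that (2) is an immediate consequence of (1) and that (4) reuses the sub-action technology of (1) together with a recurrence argument. For (1), I would iterate the sub-action inequality $f+V-V\circ\sigma-m(f)\leq 0$ along an orbit to obtain $S_n(f-m(f))(z)\leq V(\sigma^n z)-V(z)$; restricting to $z$ with $\sigma^n(z)=y$ and $d(x,z)<\epsilon$, taking the supremum over such $z$ and $n$, letting $\epsilon\to 0$, and invoking continuity of $V$ at $x$ yields $S_f(x,y)\leq V(y)-V(x)$. Setting $y=x$ gives (2).

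For the subadditivity (3), my plan is a concatenation argument. Fix $\delta,\epsilon>0$ and use the Walters condition to choose $k$ with $r^{k}<\epsilon/2$ and $\sup_{n\geq 1}\operatorname{Var}_{n+k+1}S_n f<\delta$. Select $(n_1,z_1)$ with $d(x,z_1)<\epsilon/2$, $\sigma^{n_1}(z_1)=y$ and $S_{n_1}(f-m(f))(z_1)\geq S_f(x,y)-\delta$; then select $(n_2,z_2)$ with $d(y,z_2)<r^{k+1}$, $\sigma^{n_2}(z_2)=z$ and $S_{n_2}(f-m(f))(z_2)\geq S_f(y,z)-\delta$. Splice to form $w:=(z_1^{(0)},\ldots,z_1^{(n_1-1)},z_2^{(0)},z_2^{(1)},\ldots)$: admissibility holds because $z_2^{(0)}=y_0=z_1^{(n_1)}$, and $\sigma^{n_1+n_2}(w)=z$. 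Since $z_2$ agrees with $y$ in the first $k+1$ coordinates, $w$ agrees with $z_1$ in the first $n_1+k+1$ coordinates, so $d(w,x)<\epsilon$ and $|S_{n_1}f(w)-S_{n_1}f(z_1)|<\delta$. Adding the two Birkhoff contributions gives $S_{n_1+n_2}(f-m(f))(w)\geq S_f(x,y)+S_f(y,z)-3\delta$, and letting $\delta,\epsilon\to 0$ yields (3).

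For (4), fix a bounded calibrated sub-action $V$ (existing by Proposition \ref{propo-subaction}). The function $g:=f+V-V\circ\sigma-m(f)$ is continuous and nonpositive; by $\sigma$-invariance of $\mu$ and boundedness of $V$, $\int g\,d\mu=\int f\,d\mu-m(f)=0$, so $g\equiv 0$ $\mu$-almost everywhere, and by continuity $g\equiv 0$ on $\operatorname{supp}(\mu)$. Iterating, $S_n(f-m(f))(y)=V(\sigma^n y)-V(y)$ for every $y\in\operatorname{supp}(\mu)$ and every $n\geq 1$. Fix $x\in\operatorname{supp}(\mu)$ and $\epsilon,\delta>0$, and pick $N$ large enough that $r^{2N+1}<\epsilon$, $\operatorname{Var}_{2N+1}V<\delta/2$ (from Corollary \ref{variacaoh} passed to the limit $\beta\to\infty$), and $\sup_{n}\operatorname{Var}_{n+2N+1}S_n f<\delta/2$. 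Since $\mu([x_0,\ldots,x_{2N}])>0$, Poincar\'e recurrence furnishes $y\in\operatorname{supp}(\mu)\cap[x_0,\ldots,x_{2N}]$ and arbitrarily large $n_k$ with $\sigma^{n_k}(y)\in[x_0,\ldots,x_{2N}]$. Define $z:=(y_0,\ldots,y_{n_k-1},x_0,x_1,\ldots)$, admissible because $y_{n_k}=x_0$; then $\sigma^{n_k}(z)=x$, $d(z,x)\leq r^{2N+1}<\epsilon$, and since $z,y$ coincide in their first $n_k+2N+1$ coordinates, the Walters estimate combined with the identity at $y$ gives $S_{n_k}(f-m(f))(z)\geq -\delta$. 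Hence $S_f(x,x)\geq 0$, and together with (2) this forces $x\in\Omega(f,\sigma)$.

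The technical heart lies in (4): one must propagate the cohomological identity from a.e.\ to $\operatorname{supp}(\mu)$ (straightforward once $V$ is bounded and $m(f)$ is finite), and verify that the splicing of $y$ with a tail of $x$ is controlled uniformly under Walters---rather than summable---variation regularity. For this the uniform modulus of continuity of $V$ inherited from Corollary \ref{variacaoh}, together with the Walters modulus of $S_n f$, is exactly the right tool.
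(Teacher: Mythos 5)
Your items (1), (2), (3) follow essentially the paper's own route: iterate the sub-action inequality for (1) and (2), and prove (3) by the same splicing argument (pick two approximating orbit segments, concatenate using the tail, and control the cost of the surgery by the Walters modulus $\sup_n\operatorname{Var}_{n+k}S_nf$). The details you spell out --- that the spliced point $w$ shares $n_1+k+1$ coordinates with the first segment, so the error is at most $\sup_n\operatorname{Var}_{n+k+1}S_nf$ --- are exactly the computation the paper carries out.

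Item (4) is where you genuinely diverge. The paper reduces to an ergodic maximizing measure and invokes Lemma~2.2 of Ma\~n\'e, i.e.\ an Atkinson-type statement producing a point $z$ near $p$ with $\sigma^n(z)$ near $p$ \emph{and} $|S_nf(z)-n\mu(f)|\le\epsilon$ simultaneously; splicing then yields $S_f(p,p)\ge 0$. You instead fix a bounded calibrated sub-action $V$ coming from the limit construction of Proposition~\ref{propo-subaction}, observe that $g:=f+V-V\circ\sigma-m(f)\le 0$ with $\int g\,d\mu=0$, hence $g\equiv 0$ on $\operatorname{supp}(\mu)$ by continuity, and obtain the exact cohomological identity $S_n(f-m(f))(y)=V(\sigma^n y)-V(y)$ there. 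Then plain Poincar\'e recurrence (no Birkhoff control needed) gives recurrent $y$ in $\operatorname{supp}(\mu)\cap[x_0,\ldots,x_{2N}]$, and the Birkhoff sum along the recurrence is controlled not by an ergodic average but by the oscillation of $V$ on the cylinder, $\operatorname{Var}_{2N+1}(V)$, which is small by Corollary~\ref{variacaoh} passed to the limit. This trades the ergodic-theoretic input (Ma\~n\'e's lemma plus the reduction to ergodic components) for the analytic input (uniform modulus of continuity of the sub-action $V$), avoids decomposing $\mu$ into ergodic components altogether, and makes the role of the calibrated sub-action in item (4) explicit and parallel to item (1). Both approaches are correct; the paper's is closer to the classical weak-KAM literature, yours leans more heavily on the quantitative control of $V$ that this paper already establishes, so it is a self-contained alternative that fits the paper's toolkit well.
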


\begin{proof}%For a proof see \cite{Pe}. 
From the definition of calibrated sub-action we have
\[S_n(f-m(f))(z)\leq V(\sigma^n(z)) - V(z)\]
for any $z$ and consequently 1. and 2.\newline
In order to prove 3. we fix an error $\rho>0$. Let $k$ be sufficiently large such that $\sup_n Var_{n+k}S_n(f) <\rho$ (from the Walters property) and consider points $y_z=y_z(k)$, $x_y=x_y(k)$ and numbers $n_1=n_1(k)$, $n_2=n_2(k)$ such that
\[d(x_y,x)\leq r^{k},\,\, \sigma^{n_1}(x_y)=y,\,\,S_{n_1}(f-m(f))(x_y)\geq S_{f}(x,y)-\rho \]
and
\[d(y_z,y)\leq r^{k},\,\, \sigma^{n_2}(y_z)=z,\,\,S_{n_2}(f-m(f))(y_z)\geq S_{f}(y,z)-\rho.  \]  	
Writing $x_y=(a_1a_2...a_{n_1}y)$ set $x_z:=(a_1a_2...a_{n_1}y_z)$.
Then $d(x_z,x)\leq r^{k},\,\,\sigma^{n_1+n_2}(x_z)=z$ and
\begin{align*}
S_{n_1+n_2}(f-m(f))(x_z) &= S_{n_1}(f-m(f))(x_z)+ S_{n_2}(f-m(f))(y_z) \\
&\geq (S_{n_1}(f-m(f))(x_y) -\rho) + S_{n_2}(f-m(f))(y_z)\\
&\geq S_{f}(x,y)+S_{f}(y,z) -3\rho.
\end{align*}
Making $k\to +\infty$ we obtain 
\[S_f(x,z) \geq S_{f}(x,y)+S_{f}(y,z) -3\rho.\]
This concludes the proof of 3. \newline
The proof of 4. is a consequence of Lemma 2.2 in \cite{Mane}. If $p \in \operatorname{supp}(\mu)$ then $p$ belongs to the support of some ergodic maximizing measure, thus, we can suppose $\mu$ ergodic. Given $\epsilon =r^{k}$ for some $k>0$, as $p\in\operatorname{supp}(\mu)$,  $\mu(B(p,\epsilon))>0$. There exist $z \in B(p,\epsilon)$ and $n\in \mathbb{N}$ such that $\sigma^n(z) \in B(p,\epsilon)$ and $|S_{n}f(z)-n\mu(f)| \leq \epsilon$. Thus, writing $z=(z_1z_2z_3...)$ and defining $z'=(z_1...z_np)$ we have $d(z,z')\leq r^{n+k}$. Then $d(z',p)\leq \epsilon$, $\sigma^{n}(z')=p$ and
\[S_{n}f(z')-n\mu(f) \geq S_{n}f(z)-n\mu(f) - \sup_l \operatorname{Var}_{k+l}(S_l f) \geq -r^k - \sup_l 
\operatorname{Var}_{k+l}(S_l f) \]
When $k \to +\infty$ we obtain $S_f(p,p) \geq 0 $ 
and applying the item 2. we conclude the proof.     

\end{proof}

\begin{proposition}\label{twosubaction}
 If the maximizing measure $\mu$ to $f$ is unique, then 
for any fixed $a\in supp(\mu)$ and bounded continuous calibrated sub-action $V$,
\[V(x) = V(a) + S_f(a,x) \,\,\,\,\,\forall \,\, x\in \Sigma_A(\mathbb{N}).\]
Particularly, if the maximizing measure is unique, any two bounded calibrated sub-actions differ by a constant.
\end{proposition}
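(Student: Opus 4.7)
The easy inequality $S_f(a,x) \leq V(x) - V(a)$ is already Proposition \ref{propo-propiedades}(1), so the entire task is to establish the reverse inequality $V(x) - V(a) \leq S_f(a,x)$; once this is done, the ``particularly'' clause is immediate, since two bounded calibrated sub-actions $V_1, V_2$ would both satisfy the formula with the same $S_f(a,\cdot)$, forcing their difference to equal the constant $V_1(a) - V_2(a)$.

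My plan is to use the calibrated property of $V$ to build, by successive backward choices, an orbit $(x_k)_{k \geq 0}$ with $x_0 = x$, $\sigma(x_{k+1}) = x_k$ and $V(x_k) - V(x_{k+1}) = f(x_{k+1}) - m(f)$ at every step. A telescoping sum then yields
\[V(\sigma^j(x_n)) - V(x_n) = S_j(f - m(f))(x_n) \quad \text{for every } 0 \leq j \leq n,\]
and, in particular, $V(x) - V(x_n) = S_n(f - m(f))(x_n)$. Since $V$ is bounded (Lemma \ref{propo2} and Corollary \ref{equicontinuous}), this forces $\tfrac{1}{n} S_n f(x_n) \to m(f)$. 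I then study the empirical measures $\nu_n := \tfrac{1}{n} \sum_{j=0}^{n-1} \delta_{\sigma^j(x_n)}$ and claim that $\nu_n \to \mu$ in the weak-* topology.

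The bulk of the work, and the step I expect to be the main obstacle, is this claim in the non-compact ambient space. Tightness of $(\nu_n)$ I plan to extract from the coercivity of $f$ (Lemma \ref{lemmasumabble}): if a fixed positive fraction of the orbit spent its time in cylinders $[i]$ with arbitrarily large $i$, the Birkhoff average $\tfrac{1}{n} S_n f(x_n)$ would drop strictly below $m(f)$, contradicting the telescoping identity. Invariance of any weak-* accumulation point $\nu^*$ comes from the standard estimate $\int (g \circ \sigma - g)\, d\nu_n = \tfrac{1}{n}(g(x) - g(x_n)) = O(1/n)$ for bounded continuous $g$. Because $f$ is continuous and bounded above, $-f$ is lower semi-continuous and bounded below, giving $\int f\, d\nu^* \geq \limsup \int f\, d\nu_n = m(f)$; combined with the variational bound $\int f\, d\nu^* \leq m(f)$, this forces $\nu^*$ to be maximizing, and uniqueness of $\mu$ pins down $\nu^* = \mu$, whence $\nu_n \to \mu$.

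Having $\nu_n \to \mu$, the conclusion is routine. For each $\epsilon > 0$, the open ball $B(a,\epsilon)$ has $\mu(B(a,\epsilon)) > 0$ because $a \in \operatorname{supp}(\mu)$, so by the Portmanteau theorem there exist, for all sufficiently large $n$, indices $0 \leq j_n < n$ with $z_n := \sigma^{j_n}(x_n) \in B(a,\epsilon)$; setting $N_n := n - j_n$, I may assume $N_n \geq 1$ by shrinking $\epsilon$ (the degenerate case $x = a$ is handled directly via $S_f(a,a) = 0$, which follows from Proposition \ref{propo-propiedades}(2) together with the inclusion $\operatorname{supp}(\mu) \subset \Omega(f,\sigma)$ from item (4) there). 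Subtracting the two telescoping identities gives
\[S_{N_n}(f - m(f))(z_n) = V(x) - V(z_n),\]
and continuity of $V$ forces $V(z_n) \to V(a)$ as $\epsilon \to 0$. Plugging into the definition of $S_f(a,x)$ and sending $\epsilon \to 0$ yields $S_f(a,x) \geq V(x) - V(a)$, completing the argument.
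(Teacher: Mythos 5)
Your proof follows the paper's backward-orbit strategy almost exactly: use calibration to build a backward orbit $(x_k)$ with $R=0$ at every step, form the empirical measures $\nu_n$, show that any weak-* accumulation point is $\sigma$-invariant and $f$-maximizing hence equals $\mu$ by uniqueness, and recover $S_f(a,x)\geq V(x)-V(a)$ from the telescoping identity together with the fact that $\operatorname{supp}(\mu)$ is hit by the orbit. The paper's final chain is a bit more compressed than yours, and it establishes $\int R\,d\nu=0$ by truncation and monotone convergence rather than by the Portmanteau bound for the lsc function $-f$, but those are cosmetic differences.

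The one place you flag as the main obstacle is also where your proposed argument, as written, would fail. In a countable Markov shift, controlling a Birkhoff average via coercivity (or even confining the first coordinate of every orbit point to $\{0,\dots,M\}$) does \emph{not} give tightness of $(\nu_n)$: a finite union of first-coordinate cylinders $\bigcup_{i<M}[i]$ is in general not compact, because the later coordinates can still escape to infinity. The fix is to replace the averaged argument by the pointwise one the paper uses. Since $R(x_k)=0$ and $V$ is bounded, $f(x_k)=m(f)-V(x_k)+V(\sigma(x_k))\geq m(f)-2\|V\|_\infty$ for every $k\geq 1$, so coercivity forces the first symbol of each $x_k$ into a fixed finite set $B$. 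Writing $x_k=b_k\cdots b_1x$ with each $b_i\in B$ and tail equal to the fixed sequence $x$, the entire backward orbit lies in the closure of $\{b_n\cdots b_1 x:\ n\geq 0,\ b_i\in B\}$, which is a single compact subset of $\Sigma_A(\mathbb{N})$; tightness (indeed precompactness of the supports) is then automatic, and no separate Birkhoff-average argument is needed. With that substitution, the rest of your argument goes through.
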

\begin{proof}
	We follow the ideas of \cite{BLT}. Let $V$ be any bounded calibrated sub-action and define $R=f +V -V\circ \sigma - m(f)$. By definition of calibrated sub-action we have $R \leq 0$.  Given $x_0 \in \Sigma_A(\mathbb{N})$, consider a sequence of points $x_n$ such that $\sigma(x_n) = x_{n-1}$ and $R(x_n) = 0,\,n=1,2,3,...$ (this sequence exists because $V$ is calibrated) and define a sequence of probabilities $\nu_n$ by
	$\nu_n = \frac{\delta_{x_1} +\cdots +\delta_{x_n}}{n}$. As $V$ is bounded and $f$ is coercive, there exists a finite set $B\subset \mathbb{N}$ such that any $x_n$ is of the form $b_n\ldots b_1x_0,\,b_i\in B,\, n\geq 1$. This shows that $\nu_n$ has an accumulation measure $\nu$ (in the weak* topology) when $n\to\infty$.
	It is easy to see that $\nu$ is $\sigma$-invariant. Also if $R_k(x):=\max\{R(x),-k\}$ then $R_k$ is continuous, bounded and $\int R_k\, d\nu = 0$, because $\int R_k\, d\nu_n = 0$ for all $n$. Applying the Monotone Convergence Theorem we conclude that $\int R\, d\nu = 0$. Then $\nu$ maximizes the integral of $R$ and therefore the integral of $f$. 	
	Suppose $\mu$ is the unique maximizing measure to $f$. Particularly $\nu=\mu$ and we conclude that $\mu$ is an accumulation measure of $\nu_n$. Fix a point $a\in \operatorname{supp}(\mu)$.
	It follows that, for large enough $n$, $\nu_n(\{z:d(z,a)\leq \epsilon\}) >0$. Then $a$ is an accumulation point of the sequence $\{x_n\}$. Writing $x_{n_k}\to a$, particularly we get
	\begin{align*}
	V(x_0)-V(a)\geq S_f(a,x_0) &\geq \lim_{n_k\to\infty}[S_{n_k}(f)(x_{n_k}) -n_km(f)]\\
	&=\lim_{n_k\to\infty} [S_{n_k}(R)(x_{n_k}) -V(x_{n_k})+V(x_0)]\\
	& =V(x_0)-V(a).
	\end{align*}
	
\end{proof}

\section{Large Deviation Principle}
In this section we prove a Large Deviation Principle for the 
family $(\mu_\beta)_{\beta}$. 

From now on  we suppose that $\Sigma_A(\mathbb{N})$ is a sub-shift finitely primitive and $f:\Sigma_A(\mathbb{N})\to\mathbb{R}$  satisfies the Walters condition, with $\operatorname{Var}_1(f)<\infty$ and $P(f)<\infty$.
Furthermore,  we assume that  $f$ has a unique maximizing measure $\mu$.

%In order to help with the reading of the paper  
%we list now the main steps to obtain the Theorem $\ref{teoremaA}$:

%\subsection*{Ideas of the proof}

%\begin{enumerate}
%\item Show that the eigenfunction $h_\beta$ is uniformly bounded 
%away from zero and infinity. 

%\item Construct a calibrated sub-action $V$ for a Markov shift. 
%\item Guarantee that two calibrated sub-actions differ by a constant.
%\item Prove that the family $\frac{1}{\beta}\log \mu_\beta(C)$ is bounded for
%any cylinder $C$. 
 %\item Show that $ \lim\limits_{\beta_i\rightarrow \infty} \frac{1}{\beta_i}\log\mu_{\beta_i}%(C)=-\inf\limits_{x\in C}\tilde{I}(x) \ \
%\text{for all cylinder} \ C; $
%where 
%$$\tilde{I}(x):=-\lim_{n\rightarrow \infty} \lim_{\beta_i\rightarrow \infty} \frac{1}%{\beta_i}\log\mu_{\beta_i}([x_0\ldots x_{n}]). $$
%\item Show that $I(x):=\sum\limits_{j=0}^\infty (V\circ \sigma-V-f+m(f))\circ \sigma^j(x)=%\tilde{I}(x)$.
%\end{enumerate}

%Our aim, for the rest of the paper, will be to obtain the remaining steps: 5 and 6, 
%from the list mentioned above. 

From Lemma $\ref{twosubaction}$ we know that any two {bounded} calibrated sub-actions differ by a constant.
Define
\begin{align*}
g_\beta&:=\beta f+\log h_\beta-\log (h_\beta \circ \sigma)-P(\beta f)\\
R_ {-}&:= f + V - V\circ\sigma -m(f)\\
R=R_{+} &:= -f -V + V\circ\sigma +m(f),\\
\end{align*}
where $V$ is any bounded calibrated sub-action to $f$ and $m(f)=\int f\,d\mu$. 

Observe that $R=R_{+}\geq 0$ because $V$ is a calibrated sub-action. 
 We write
 $R^n(x):=\sum\limits_{j=0}^{n-1}R(\sigma^j(x))$
and $R^\infty:=\lim\limits_{n \rightarrow \infty}R^n$. 
The function $R^{\infty}$ can {attain} the value $+\infty$ in some points.

\begin{lemma}\label{convergeR}
The functions $\frac{g_{\beta}}{\beta}$ converge to $R_{-}$ uniformly  in compact sets.
There exists a sequence of numbers $\{M_n\}$ such that for $n$ large enough
\[\operatorname{Var}_n(g_\beta /\beta) \leq M_n,\,\,\,\,\operatorname{Var}_n(R_{-}) \leq M_n \,\,\,\,\text{and}\,\,\,\, \lim_{n\to\infty}M_n=0.\] 
Furthermore
\[\operatorname{Var}_{n+k}(R_{-}^n) \leq \operatorname{Var}_{n+k}(S_n(f)) +M_{n+k} +M_k.\]

\end{lemma}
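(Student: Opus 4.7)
The plan is to unfold each quantity in terms of $V_\beta := \frac{1}{\beta}\log h_\beta$ and the fixed bounded calibrated sub-action $V$, and then invoke the convergence and equicontinuity results of Section 3. For the uniform convergence of $g_\beta/\beta$ to $R_{-}$ I would write
$$\frac{g_\beta}{\beta} = f + V_\beta - V_\beta \circ \sigma - \frac{P(\beta f)}{\beta},$$
and use Lemma \ref{pressure}(1) to replace $P(\beta f)/\beta$ by $m(f)$ in the limit. It then remains to show that $V_\beta - V_\beta \circ \sigma$ converges uniformly on each compact set to $V - V\circ\sigma$. By Corollary \ref{equicontinuous} the family $\{V_\beta\}$ is equicontinuous and uniformly bounded; every accumulation point in the compact-open topology is, by Proposition \ref{propo-subaction}, a bounded calibrated sub-action; and uniqueness of the maximizing measure forces any two such accumulation points to differ by a constant (Proposition \ref{twosubaction}). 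Since the constant cancels in the difference $V_\beta - V_\beta\circ\sigma$, the family of differences has a unique accumulation point $V-V\circ\sigma$ and therefore converges as a whole. Upgrading subsequential convergence to full convergence via the uniqueness of $\mu$ is the main conceptual step of the lemma.

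For the variation bound in the second assertion, the triangle-type inequality
$$\operatorname{Var}_n\!\left(\tfrac{g_\beta}{\beta}\right) \leq \operatorname{Var}_n(f) + \operatorname{Var}_n(V_\beta) + \operatorname{Var}_n(V_\beta \circ \sigma)$$
reduces the task to three ingredients: Corollary \ref{variacaoh} supplies a $\beta$-independent sequence $\widetilde{M}_n \to 0$ with $\operatorname{Var}_n(V_\beta) \leq \widetilde{M}_n$; the Walters condition applied with $n=1$ (together with $\operatorname{Var}_1(f)<\infty$) gives $\operatorname{Var}_n(f) \to 0$; and the elementary observation that $x,y$ agreeing on the first $n$ coordinates implies $\sigma x, \sigma y$ agree on the first $n-1$ coordinates yields $\operatorname{Var}_n(V_\beta\circ\sigma) \leq \widetilde{M}_{n-1}$. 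I would then set $M_n$ to be the sum of these three contributions. For $\operatorname{Var}_n(R_{-})$ the same estimate passes to the limit: fixing a cylinder $[a_0,\dots,a_{n-1}]$ and $x,y$ inside it, the pointwise convergence of $V_{\beta_j}$ along a suitable subsequence (constants cancel in the difference $V(x)-V(y)$) gives $|V(x)-V(y)| \leq \widetilde{M}_n$, whence $\operatorname{Var}_n(V) \leq \widetilde{M}_n$ and the same bound applies to $R_{-}$.

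For the third assertion I would telescope:
$$R_{-}^n(x) = S_n f(x) + V(x) - V(\sigma^n x) - n\,m(f),$$
so the constant $nm(f)$ does not contribute to $\operatorname{Var}_{n+k}$ and
$$\operatorname{Var}_{n+k}(R_{-}^n) \leq \operatorname{Var}_{n+k}(S_n f) + \operatorname{Var}_{n+k}(V) + \operatorname{Var}_{n+k}(V\circ \sigma^n).$$
The middle term is bounded by $M_{n+k}$ from the previous step. For the last term, noting that if $x,y$ agree on their first $n+k$ coordinates then $\sigma^n x,\sigma^n y$ agree on their first $k$ coordinates, we get $\operatorname{Var}_{n+k}(V\circ\sigma^n) \leq \operatorname{Var}_k(V) \leq M_k$, which completes the bound. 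Apart from the initial upgrade from subsequential to full convergence, the remaining steps are routine variation bookkeeping.
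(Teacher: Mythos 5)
Your proposal is correct and follows essentially the same route as the paper: decompose $g_\beta/\beta = f + V_\beta - V_\beta\circ\sigma - P(\beta f)/\beta$, use Corollary~\ref{equicontinuous} (Arzel\`a--Ascoli) together with Propositions~\ref{propo-subaction} and~\ref{twosubaction} to force a unique accumulation point of the cocycle differences (hence full, not merely subsequential, uniform convergence on compacts), and then deduce the variation bounds from Corollary~\ref{variacaoh}, the Walters condition, and the telescoping identity $R_-^n = S_n f + V - V\circ\sigma^n - n\,m(f)$. You merely spell out two steps the paper leaves implicit --- the upgrade from subsequential to full convergence and the appeal to Proposition~\ref{twosubaction} --- but the substance is identical.
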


\begin{proof}
For any fixed $x$, the family $\frac{g_{\beta}(x)}{\beta}$ is bounded in the variable $\beta$ and from Proposition \ref{propo-subaction} any accumulation point has the form $R_{-}(x)=f(x) + V(x) - V(\sigma(x)) - m(f)$ where $V$ is any bounded calibrated sub-action. 

It follows from Corollary \ref{equicontinuous} that for a fixed compact $K \subset \Sigma_A(\mathbb{N})$ we have $g_\beta/\beta$ bounded, equicontinuous and converging pointwise to $R_{-}$. Therefore converges uniformly on $K$.

{
From Corollary $\ref{variacaoh}$,  there exists a sequence of numbers $\{M_n\}_{n\geq 3}$ such that
\[\operatorname{Var}_n(f)+\operatorname{Var}_n(\frac{1}{\beta}\log(h_\beta))+\operatorname{Var}_n(\frac{1}{\beta}\log(h_\beta\circ\sigma)) \leq M_n \,\,\,\,\text{and}\,\,\,\, \lim_{n\to\infty}M_n=0.\] 
As $g_\beta /\beta = f + \frac{1}{\beta}\log(h_\beta)-\frac{1}{\beta}\log(h_\beta\circ\sigma)-\frac{1}{\beta}\log(P(\beta))$, we obtain 
\[\operatorname{Var}_n(g_\beta/\beta) \leq M_n \,\,\,\, and \,\,\,\,\operatorname{Var}_n(R_{-}) \leq M_n.\] }
Also, as
\[R_-^n = S_n(f)  +V - V\circ\sigma^{n} - n\cdot m(f),\]
from Corollary \ref{variacaoh} again, we conclude that
\[\operatorname{Var}_{n+k}(R_-^n) \leq \operatorname{Var}_{n+k}(S_n(f)) +M_{n+k} +M_k.\]

\end{proof}

From Lemma \ref{gibbsinequality}, for any cylinder $C$, the family $\frac{1}{\beta}\log \mu_\beta(C)$ is bounded. There is a countable set of cylinders, therefore, applying a Cantor's diagonal argument,  there exists a 
subsequence $\beta_i$ such that, for any cylinder $C$,  $\frac{1}{\beta_i}\log \mu_{\beta_i}(C)$ converges.

\begin{proposition}\label{devfunction}
Under the same assumptions of Theorem $\ref{teoremaA}$ and besides suppose that for any
cylinder $C$ there exists  $\lim\limits_{\beta_i\rightarrow \infty} \frac{1}{\beta_i}\log\mu_{\beta_i}$
then
\begin{enumerate}
  \item The function $$\tilde{I}(x):=-\lim_{n\rightarrow \infty} \lim_{\beta_i\rightarrow \infty} \frac{1}{\beta_i}\log\mu_{\beta_i}([x_0\ldots x_{n}])\geq 0 $$
is well defined and lower semi-continuous.
  \item $$ \lim_{\beta_i\rightarrow \infty} \frac{1}{\beta_i}\log\mu_{\beta_i}(C)=-\inf_{x\in C}\tilde{I}(x) \ \
\text{for all cylinder} \ C. $$
  \item $\tilde{I}(x)\geq \tilde{I}(\sigma(x))$. Particularly, exists $\tilde{I}_0(x):=\lim\limits_{n\rightarrow \infty} \tilde{I}(\sigma^n(x))$.
\end{enumerate}
\end{proposition}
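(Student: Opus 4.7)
Parts (1) and (3) are routine monotonicity arguments. For (1), the nested sequence of cylinders $[x_0,\ldots,x_n]$ decreases in $n$, so $\mu_{\beta_i}[x_0,\ldots,x_n]$ is non-increasing in $n$; hence $\ell_n(x):=\lim_{\beta_i\to\infty}\frac{1}{\beta_i}\log\mu_{\beta_i}[x_0,\ldots,x_n]$ is non-increasing in $n$ and bounded above by $0$, so $\tilde{I}(x)=\sup_n(-\ell_n(x))\in[0,+\infty]$ is well-defined. Since $-\ell_n$ depends only on $(x_0,\ldots,x_n)$ it is locally constant, hence continuous, and $\tilde{I}$ as a supremum of continuous functions is lower semi-continuous. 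For (3), the inclusion $[x_0,\ldots,x_n]\subset\sigma^{-1}[x_1,\ldots,x_n]$ together with $\sigma$-invariance of $\mu_{\beta_i}$ gives $\ell_n(x)\leq\ell_{n-1}(\sigma x)$, whence $\tilde{I}(x)\geq\tilde{I}(\sigma x)$; the sequence $\{\tilde{I}(\sigma^n x)\}_n$ is therefore non-increasing and converges to $\tilde{I}_0(x)$.

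Part (2) is the substantive statement. Write $C=[a_0,\ldots,a_m]$. The lower bound $\lim_i\frac{1}{\beta_i}\log\mu_{\beta_i}(C)\geq-\inf_{x\in C}\tilde{I}(x)$ is immediate: for $x\in C$ and $n\geq m$, $[x_0,\ldots,x_n]\subset C$, so $\ell_n(x)\leq\lim_i\frac{1}{\beta_i}\log\mu_{\beta_i}(C)$; letting $n\to\infty$ and taking the sup over $x\in C$ yields the inequality. For the upper bound, decompose $C=\bigsqcup_b[a_0,\ldots,a_m,b_{m+1},\ldots,b_n]$ over admissible tuples $b=(b_{m+1},\ldots,b_n)$, and split this countable sum into the ``finite part'' $C_F^n:=\bigsqcup_{b\in F^{n-m}}[a_0,\ldots,a_m,b_{m+1},\ldots,b_n]$ for a finite $F\subset\mathbb{N}$ and its tail $C\setminus C_F^n$. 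Via Lemma \ref{gibbsinequality} and the summability $\sum_i e^{\sup f|_{[i]}}<\infty$ of Lemma \ref{lemmasumabble}, choose $F$ containing every $i$ with $\sup f|_{[i]}>0$ and satisfying $\sum_{i\notin F}e^{\sup f|_{[i]}}<\epsilon$; the bound $e^{\beta\sup f|_{[i]}}\leq e^{\sup f|_{[i]}}$ (valid for $\beta\geq 1$ and $i\notin F$) then controls the tail at exponential scale. The finite part satisfies $\mu_{\beta_i}(C_F^n)\leq |F|^{n-m}\max_{b\in F^{n-m}}\mu_{\beta_i}[a_0,\ldots,a_m,b_{m+1},\ldots,b_n]$, and after taking $\frac{1}{\beta_i}\log$ the cardinality factor vanishes and the max over a finite set commutes with the limit, giving $\lim_i\frac{1}{\beta_i}\log\mu_{\beta_i}(C)\leq -\inf_{x\in C\cap F^{\mathbb{N}}}\tilde{I}_n(x)+O(\epsilon)$ for every $n$, where $\tilde{I}_n:=-\ell_n$.

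To pass from $\inf_x\tilde{I}_n$ to $\inf_x\tilde{I}$ we exploit compactness of $C\cap F^{\mathbb{N}}$: for each $n$ pick $x^{(n)}$ almost minimizing $\tilde{I}_n$, extract a convergent subsequence $x^{(n_k)}\to x^*$, and use that $\tilde{I}_m$ is locally constant on $(m+1)$-cylinders to deduce $\tilde{I}_m(x^*)\leq\limsup_k\inf_{C\cap F^{\mathbb{N}}}\tilde{I}_{n_k}$; sending $m\to\infty$ gives $\tilde{I}(x^*)\leq\sup_n\inf_{C\cap F^{\mathbb{N}}}\tilde{I}_n$, and combining with the previous bound yields $-\lim_i\frac{1}{\beta_i}\log\mu_{\beta_i}(C)\geq\inf_{x\in C\cap F^{\mathbb{N}}}\tilde{I}(x)-O(\epsilon)$. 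Letting $F\nearrow\mathbb{N}$ and $\epsilon\to 0$ closes the argument. The main obstacle is the tail estimate: coercivity of $f$ (via $\sum_i e^{\sup f|_{[i]}}<\infty$) and the hypothesis $\beta\geq 1$ must cooperate to turn $\beta$-dependent exponential sums into $\beta$-independent summable series, which is precisely what enables the reduction to a finite-alphabet compact argument.
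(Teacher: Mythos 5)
Your treatment of parts (1) and (3), and of the easy ``$\geq$'' inequality in part (2), is correct and coincides with the paper's (the paper outsources (1) and (3) to \cite{Mengue2}). Your route to the ``$\leq$'' inequality is, however, genuinely different from the paper's and has a gap in the tail estimate.

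\textbf{Comparison of approaches.} The paper proceeds symbol by symbol: starting from $C=[x_0\ldots x_m]$, it shows that
$\lim_\beta\frac1\beta\log\mu_\beta[x_0\ldots x_m]=\max_{0\leq i<l}\lim_\beta\frac1\beta\log\mu_\beta[x_0\ldots x_m i]$
for a \emph{finite} $l$ depending on the Gibbs constants and on $\inf f|_{[0]}$, picks a maximizer $y_0$, and iterates to produce a single point $z=(x_0,\ldots,x_m,y_0,y_1,\ldots)\in C$ with $-\tilde I(z)$ equal to the limit. You instead truncate all $n-m$ symbols at once to a fixed finite alphabet $F$, then pass from $\tilde I_n$ to $\tilde I$ by compactness of $C\cap F^{\mathbb N}$, and finally let $F\nearrow\mathbb N$. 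Both strategies must show that the countable-alphabet tail is negligible; once that is secured, both produce a point in $C$ achieving the infimum (the paper constructively, you by extracting a limit $x^*$ of near-minimizers). Your compactness step is fine, but the tail control is where the two arguments diverge and where yours does not close as written.

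\textbf{The gap.} To dismiss the tail you invoke only the pointwise bound $e^{\beta\sup f|_{[i]}}\leq e^{\sup f|_{[i]}}$ for $\beta\geq 1$, $i\notin F$. This yields $\sum_{i\notin F}e^{\beta\sup f|_{[i]}}\leq \epsilon$, and hence a bound on the tail measure of the form $e^{\beta\,\mathrm{const}}\cdot\epsilon$, where the constant comes from Lemma~\ref{gibbsinequality} and is $\beta$-uniform but \emph{not} small. Taking $\frac1\beta\log$ and sending $\beta\to\infty$, the tail's exponential rate is then $\mathrm{const}+\lim_\beta\frac{\log\epsilon}{\beta}=\mathrm{const}$, a quantity that does not tend to $-\infty$ as $\epsilon\to 0$. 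In particular the asserted ``$+O(\epsilon)$'' error in your inequality
$\lim_i\frac1{\beta_i}\log\mu_{\beta_i}(C)\leq -\inf_{x\in C\cap F^{\mathbb N}}\tilde I_n(x)+O(\epsilon)$
is not justified, and the final passage $\epsilon\to 0$ does not remove the tail contribution. What you need is the stronger inequality
$\sum_{i\notin F}e^{\beta\sup f|_{[i]}}\leq\bigl(\sum_{i\notin F}e^{\sup f|_{[i]}}\bigr)^{\beta}\leq\epsilon^{\beta}$
(the $\ell^\beta\leq\ell^1$ comparison, valid here because $\sup f|_{[i]}\leq 0$ for $i\notin F$ and $\beta\geq 1$), which makes the tail's rate $\mathrm{const}+\log\epsilon\to-\infty$ as $\epsilon\to 0$. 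This is precisely the device the paper uses, phrased as $\sum_{i\geq l}e^{\beta b}e^{\beta\sup f|_{[i]}}\leq\bigl(\sum_{i\geq l}e^{b}e^{\sup f|_{[i]}}\bigr)^\beta\leq e^{\beta a}e^{\beta\inf f|_{[0]}}\leq\mu_\beta[x_0\ldots x_m 0]$, which additionally absorbs the tail into the finite max with no residual error. With the power bound inserted in place of the pointwise one, your argument can be made to work; without it, the tail is not controlled.
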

\begin{proof}
We will prove the item 2. The proofs of items 1. and 3. are consequence of the definition of $\tilde{I}$ and can be found in \cite{Mengue2}. 

\bigskip

%$$ \lim_{\beta_i\rightarrow \infty} \frac{1}{\beta_i}\log\mu_{\beta_i}(C)=-\inf_{x\in C}\tilde{I}(x).$$
{Given $x=(x_0,x_1,x_2...)\in C$,} 
\begin{align*}
\lim_{\beta_i\rightarrow \infty} \frac{1}{\beta_i}\log\mu_{\beta_i}(C)&=
\lim_{n\rightarrow \infty}\lim_{\beta_i\rightarrow \infty} \frac{1}{\beta_i}\log\mu_{\beta_i}(C)\\
&\geq \lim_{n\rightarrow \infty}\lim_{\beta_i\rightarrow \infty} \frac{1}{\beta_i}\log\mu_{\beta_i}([x_0\ldots x_n])\\
&= -\tilde{I}(x),
\end{align*}
%thus $-\tilde{I}(x)\leq \lim\limits_{\beta_i\rightarrow \infty} \frac{1}{\beta_i}\log\mu_{\beta_i}(C)$
%so  $\sup\limits_{x\in C}-\tilde{I}(x)\leq \lim\limits_{\beta_i\rightarrow \infty} \frac{1}{\beta_i}\log\mu_{\beta_i}(C)$,
Taking any possible $x$ we obtain $$\lim_{\beta_i\rightarrow \infty} \frac{1}{\beta_i}\log\mu_{\beta_i}(C) \geq -\inf_{x\in C} \tilde{I}(x).$$

Now we prove the other inequality.
Indeed, put $C=[x_0\ldots x_m]$ for some $m$.
From Lemma \ref{gibbsinequality} we can take constants {$a$ and $b$, depending on $f, x_0,...,x_m$ such that}

\begin{equation*} \label{cap3-ec2}
e^{\beta a}e^{\beta \inf f|_{[i]}}\leq \mu_{\beta}[x_0\ldots x_m i]\leq e^{\beta b}e^{\beta \sup f|_{[i]}}.
\end{equation*}
As $\operatorname{Var}_1(f)<\infty$ we have $\inf f|_{[0]}> -\infty$. Then from Lemma \ref{lemmasumabble} there exists some $l_n$ such that
\[\sum_{i\geq l_n}e^{ b}e^{ \sup f|_{[i]}} \leq e^{a}e^{\inf f|_{[0]}}.\]
Particularly, for any $\beta>1$
\[ \sum_{i\geq l_n}e^{\beta b}e^{\beta \sup f|_{[i]}}\leq \left(\sum_{i\geq l_n}e^{ b}e^{ \sup f|_{[i]}}\right)^{\beta} \leq e^{\beta a}e^{\beta \inf f|_{[0]}}.\]
Therefore, for any $\beta>1$
\[\sum_{i\geq l_n}\mu_{\beta}[x_0\ldots x_m i] \leq \mu_{\beta}[x_0\ldots x_m 0]\leq \sum_{i=0}^{l_n-1}\mu_{\beta}[x_0\ldots x_m i].\]
Then

\begin{align*}
\lim_{\beta \to\infty}\frac{1}{\beta}\log \mu_{\beta}[x_0\ldots x_m]& = \max\left\{\lim_{\beta \to\infty}\frac{1}{\beta}\log \sum_{i\geq l_n}\mu_{\beta}[x_0\ldots x_m i], \lim_{\beta \to\infty}\frac{1}{\beta}\log \sum_{i=0}^{l_n-1}\mu_{\beta}[x_0\ldots x_m i]\right\}\\
&=\lim_{\beta \to\infty}\frac{1}{\beta}\log \sum_{i=0}^{l_n-1}\mu_{\beta}[x_0\ldots x_m i]\\
&=\max_{0\leq i< l_n}\left\{\lim_{\beta \to\infty}\frac{1}{\beta}\log \mu_{\beta}[x_0\ldots x_m i]\right\}.
\end{align*}

Particularly, there exists some $y_0\in \mathbb{N}$ such that
\[\lim_{\beta \to\infty}\frac{1}{\beta}\log \mu_{\beta}[x_0\ldots x_m] =\lim_{\beta \to\infty}\frac{1}{\beta}\log \mu_{\beta}[x_0\ldots x_my_0].\]
%Proceeding this construction we can construct some point $x=x_1x_2...$ such that
%\[I(x) = -\lim_{\beta \to\infty}\frac{1}{\beta}\log \mu_{\beta}[x_0\ldots x_m].\]

Using an inductive argument  we can construct a sequence   $y_0,y_1,\ldots$ of elements in $\mathbb{N}$
such that
\begin{align*}
\lim_{\beta_i \rightarrow \infty}\frac{1}{\beta_i} \log \mu_{\beta_i}(C)&=
 \lim_{\beta_i \rightarrow \infty}\frac{1}{\beta_i} \log \mu_{\beta_i}([x_0\ldots x_m y_0])\\
&= \lim_{\beta_i \rightarrow \infty}\frac{1}{\beta_i} \log \mu_{\beta_i}([x_0\ldots x_m y_0y_1])= \cdots
\end{align*}

Let $z$ be the intersection point of the cylinder sets $[x_0\ldots x_m y_0\ldots y_n]$, $n\geq 1$.
Thus, when $n \rightarrow \infty$
\begin{align*}
\lim_{\beta_i \rightarrow \infty}\frac{1}{\beta_i} \log \mu_{\beta_i}(C)&=
\lim_{n \rightarrow \infty}\lim_{\beta_i \rightarrow \infty}\frac{1}{\beta_i} \log \mu_{\beta_i}([x_0\ldots x_m y_0\ldots y_n])\\
&=-\tilde{I}(z)\\
&\leq -\inf_{x \in C}\tilde{I}(x).
\end{align*}

\end{proof}

\begin{lemma}\label{lemaA} For any $\beta >1$,
  
$$\mu_{\beta}([x_1\ldots x_n])=\int_{[x_0\ldots x_n]}e^{-g_{\beta}}d\mu_\beta.$$

Particularly we obtain
\begin{equation*} \label{ec44}
\bigg(\inf _{z\in [x_0\ldots x_n]} e^{-g_\beta(z)}\bigg)\mu_\beta([x_0\ldots x_n])\leq \mu_\beta([x_1\ldots x_n])\leq
\bigg(\sup _{z\in [x_0\ldots x_n]} e^{-g_\beta(z)}\bigg)\mu_\beta([x_0\ldots x_n]).
\end{equation*}

\end{lemma}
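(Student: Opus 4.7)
The plan is to rewrite the integrand using the defining relations of the Ruelle--Perron--Frobenius data and then invoke the eigenmeasure identity $L_{\beta f}^*\nu_\beta = \lambda_\beta\,\nu_\beta$ from Theorem \ref{ruelle}, with $\lambda_\beta = e^{P(\beta f)}$. Unpacking the definition of $g_\beta$ and using $d\mu_\beta = h_\beta\, d\nu_\beta$ shows that the identity we want is equivalent to
\[
\mu_\beta([x_1\ldots x_n]) \;=\; \lambda_\beta\int \chi_{[x_0\ldots x_n]}\, e^{-\beta f}\,(h_\beta\circ\sigma)\, d\nu_\beta,
\]
which is precisely what the duality $\int L_{\beta f}\varphi\, d\nu_\beta = \lambda_\beta \int \varphi\, d\nu_\beta$ produces when applied to $\varphi := \chi_{[x_0\ldots x_n]}\, e^{-\beta f}\,(h_\beta\circ\sigma)$ --- provided one can show that $L_{\beta f}\varphi = h_\beta\,\chi_{[x_1\ldots x_n]}$.

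The central step is therefore a direct pointwise evaluation of $L_{\beta f}\varphi$. The factor $e^{\beta f(y)}$ in the definition of the Ruelle operator cancels the $e^{-\beta f(y)}$ in $\varphi$, so
\[
L_{\beta f}\varphi(x) \;=\; h_\beta(x) \sum_{\sigma(y)=x} \chi_{[x_0\ldots x_n]}(y).
\]
Each preimage is of the form $y = a\cdot x$ for an admissible symbol $a\in\mathbb{N}$, and the constraint $y\in[x_0\ldots x_n]$ forces $y_0 = x_0$ and $y_i = x_{i-1}$ for $1 \leq i \leq n$. The sum is thus non-empty only when $x \in [x_1\ldots x_n]$, in which case it contains the single preimage $y = x_0\cdot x$. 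Hence $L_{\beta f}\varphi = h_\beta\,\chi_{[x_1\ldots x_n]}$, and integrating against $\nu_\beta$ yields the desired identity.

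A brief integrability check is in order: $\varphi$ is bounded, because $h_\beta$ is bounded by Lemma \ref{propo2} and $f$ is bounded on $[x_0]$ (using $\operatorname{Var}_1 f<\infty$ together with $\sup f|_{[x_0]} < \infty$ from Lemma \ref{lemmasumabble}), so the duality formula applies without further comment. The ``Particularly'' clause then follows immediately by integrating the pointwise bounds $\inf_{z\in[x_0\ldots x_n]} e^{-g_\beta(z)} \leq e^{-g_\beta} \leq \sup_{z\in[x_0\ldots x_n]} e^{-g_\beta(z)}$ against the restriction of $\mu_\beta$ to the cylinder $[x_0\ldots x_n]$. The only delicate point --- and it is minor --- is the coordinate bookkeeping involved in identifying the unique $\sigma$-preimage lying inside $[x_0\ldots x_n]$; the rest is a direct manipulation of the Ruelle--Perron--Frobenius eigen-data.
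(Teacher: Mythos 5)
Your proof is correct and follows the same route the paper invokes by citing Proposition 3.2 of Parry--Pollicott: set $\varphi=\chi_{[x_0\ldots x_n]}\,e^{-\beta f}\,(h_\beta\circ\sigma)$, observe that the pointwise computation gives $L_{\beta f}\varphi = h_\beta\,\chi_{[x_1\ldots x_n]}$, and integrate against $\nu_\beta$ using $L_{\beta f}^*\nu_\beta=\lambda_\beta\nu_\beta$ together with $d\mu_\beta=h_\beta\,d\nu_\beta$. The coordinate bookkeeping you flag (the symbols $x_0,\dots,x_n$ doing double duty as the fixed cylinder word and as coordinates of the evaluation point) is handled correctly, and the integrability remark via Lemmas~\ref{propo2} and~\ref{lemmasumabble} is exactly the right justification for applying the duality.
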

The proof follows the same computations present in the proof of Proposition 3.2 in \cite{PP}.

\begin{proposition}\label{propoB}
%If $\lim\limits_{\beta \rightarrow \infty}\frac{g_\beta(x)}{\beta} = -R(x)$ then
%({\color{red} na verdade $\beta$ ï¿½ $(\beta_i)$})
\begin{equation*}
\lim_{n \rightarrow \infty} \lim_{\beta \rightarrow \infty} \frac{1}{\beta}
\log \frac{\mu_\beta[x_1\ldots x_n]}{\mu_\beta[x_0\ldots x_n]}=R_+(x),
\end{equation*}
where $x=(x_0,x_1,x_2,...)$

\end{proposition}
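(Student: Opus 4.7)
The plan is to combine the integral identity in Lemma \ref{lemaA} with the uniform variational control provided by Lemma \ref{convergeR}. From Lemma \ref{lemaA}, the ratio can be rewritten as the average of $e^{-g_\beta}$ over the cylinder, which immediately gives the sandwich
\[
\inf_{z\in [x_0\ldots x_n]} e^{-g_\beta(z)} \;\leq\; \frac{\mu_\beta[x_1\ldots x_n]}{\mu_\beta[x_0\ldots x_n]} \;\leq\; \sup_{z\in [x_0\ldots x_n]} e^{-g_\beta(z)}.
\]
Taking $\frac{1}{\beta}\log$ reduces the statement to controlling $-g_\beta(z)/\beta$ uniformly in $z \in [x_0\ldots x_n]$.

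Next, I would exploit the two conclusions of Lemma \ref{convergeR}: (i) $g_\beta/\beta \to R_{-}$ uniformly on compact sets, in particular at the single point $x$; and (ii) $\operatorname{Var}_{n+1}(g_\beta/\beta) \leq M_{n+1}$, a bound uniform in $\beta$. Since any $z \in [x_0\ldots x_n]$ agrees with $x$ on coordinates $0,\dots,n$, property (ii) yields
\[
\left|\frac{g_\beta(z)}{\beta} - \frac{g_\beta(x)}{\beta}\right| \leq M_{n+1}
\]
for every $\beta > 1$, while (i) gives $g_\beta(x)/\beta \to R_{-}(x)$. Combining,
\[
-R_{-}(x) - M_{n+1} - \varepsilon(\beta) \;\leq\; -\frac{g_\beta(z)}{\beta} \;\leq\; -R_{-}(x) + M_{n+1} + \varepsilon(\beta),
\]
with $\varepsilon(\beta)\to 0$ as $\beta\to\infty$, uniformly in $z\in [x_0\ldots x_n]$.

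Plugging these bounds into the sandwich and passing to the limit $\beta\to\infty$ gives
\[
-R_{-}(x) - M_{n+1} \;\leq\; \liminf_{\beta\to\infty} \frac{1}{\beta}\log \frac{\mu_\beta[x_1\ldots x_n]}{\mu_\beta[x_0\ldots x_n]} \;\leq\; \limsup_{\beta\to\infty} \frac{1}{\beta}\log \frac{\mu_\beta[x_1\ldots x_n]}{\mu_\beta[x_0\ldots x_n]} \;\leq\; -R_{-}(x) + M_{n+1}.
\]
Finally, sending $n\to\infty$ and using $M_{n+1}\to 0$ yields the claimed limit $-R_{-}(x) = R_{+}(x)$.

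The only delicate point is that the cylinder $[x_0\ldots x_n]$ is not compact (the tail coordinates range over all of $\mathbb{N}$), so uniform convergence of $g_\beta/\beta$ on the cylinder itself is not available; the $\beta$-independent variation bound from Lemma \ref{convergeR} is exactly what circumvents this obstacle, since it replaces uniform convergence on the cylinder by pointwise convergence at $x$ plus a $\beta$-uniform modulus of continuity. Everything else is a routine manipulation of inequalities.
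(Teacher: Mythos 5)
Your argument is correct and is essentially identical to the paper's own proof: both apply Lemma \ref{lemaA} to obtain the $\inf/\sup$ sandwich, then invoke the $\beta$-uniform variation bound and pointwise convergence of $g_\beta/\beta$ from Lemma \ref{convergeR} to pass to the limit first in $\beta$ and then in $n$. Your remark on why the $\beta$-uniform modulus is needed (non-compactness of the cylinder) accurately captures the point the paper is implicitly relying on.
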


\begin{proof}
{Given $x=(x_0,x_1,x_2,...)$,} applying  Lemma $\ref{lemaA}$ to the function $g_\beta$, we have
%$$\bigg(\inf _{z\in [x_0\ldots x_n]} e^{-g_\beta(z)}\bigg)\mu_\beta([x_0\ldots x_n])\leq \mu_\beta([x_1\ldots x_n])\leq
%\bigg(\sup _{z\in [x_0\ldots x_n]} e^{-g_\beta(z)}\bigg)\mu_\beta([x_0\ldots x_n]) $$
%that is equivalent to
\begin{equation*} \label{ec38}
\inf_{z\in [x_0\ldots x_n]}-\frac{g_\beta(z)}{\beta}\leq
\frac{1}{\beta}\log \frac{\mu_\beta[x_1\ldots x_n]}{\mu_\beta[x_0\ldots x_n]}\leq \sup_{z\in [x_0\ldots x_n]}-\frac{g_\beta(z)}{\beta}.
\end{equation*}

From Lemma \ref{convergeR}, there exist constants $M_n$ converging to zero when $n\to\infty$ such that
\begin{equation*}
\sup_{z\in [x_0\ldots x_n]}-\frac{g_\beta(z)}{\beta} \leq -\frac{g_\beta(x)}{\beta}+M_n
%\end{equation*}
\,\,\,\,\,\text{and}\,\,\,\,\,
%\begin{equation*}
\inf_{z\in [x_0\ldots x_n]}-\frac{g_\beta(z)}{\beta} \geq -\frac{g_\beta(x)}{\beta}-M_n.
\end{equation*}

Thus 
\begin{equation*}
-M_n-\frac{g_\beta(x)}{\beta}\leq \frac{1}{\beta}\log \frac{\mu_\beta[x_1\ldots x_n]}{\mu_\beta[x_0\ldots x_n]}
\leq M_n-\frac{g_\beta(x)}{\beta}.
\end{equation*}
Taking the limit on $\beta$ and then the limit on $n$ we get

\begin{equation*}
\lim_{n \rightarrow \infty} \lim_{\beta \rightarrow \infty} \frac{1}{\beta}
\log \frac{\mu_\beta[x_1\ldots x_n]}{\mu_\beta[x_0\ldots x_n]}=R_+(x).
\end{equation*}

\end{proof}

In the next proofs we make strong use of the uniqueness hypothesis of the maximizing measure.

\begin{proposition}\label{propoA}
Let $x \in \Sigma_A$ such that $R_+^\infty(x)<\infty$ and $p \in \operatorname{supp}(\mu)$. Then $p$ is an accumulation point of $\sigma^n(x)$.
\end{proposition}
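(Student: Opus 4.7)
The plan is to show that the forward orbit $\{\sigma^n(x)\}$ is eventually trapped in a compact subset of $\Sigma_A(\mathbb{N})$ and that its empirical measures converge weakly-$*$ to $\mu$; the accumulation statement for $p\in\operatorname{supp}(\mu)$ then follows by portmanteau.

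First I would observe that since $R_{+}\ge 0$ and $R_+^\infty(x)<\infty$, the terms $R_+(\sigma^j(x))$ tend to zero. Writing $R_+=-f-V+V\circ\sigma+m(f)$ and using the boundedness of the calibrated sub-action $V$, this forces $f(\sigma^j(x))$ to stay bounded below by a constant $L$ independent of $j$. By the coerciveness of $f$ provided by Lemma \ref{lemmasumabble}, the set $B:=\{i\in\mathbb{N}:\sup f|_{[i]}\ge L\}$ is finite, so $x_j\in B$ for every $j\ge J_0$. Consequently the orbit tail lies in the compact shift $B^{\mathbb{N}}\cap \Sigma_A(\mathbb{N})$.

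Next I would form the empirical measures $\nu_n:=\frac{1}{n}\sum_{j=0}^{n-1}\delta_{\sigma^j(x)}$. They form a tight family, and the relation $\sigma_*\nu_n-\nu_n=\frac{1}{n}(\delta_{\sigma^n(x)}-\delta_x)$ shows that every weak-$*$ accumulation point $\nu$ is $\sigma$-invariant and supported in $B^{\mathbb{N}}\cap\Sigma_A(\mathbb{N})$. Summing the definition of $R_+$ telescopes to
\begin{equation*}
\frac{1}{n}S_n(f)(x)=m(f)+\frac{V(\sigma^n(x))-V(x)}{n}-\frac{R_+^n(x)}{n}\longrightarrow m(f),
\end{equation*}
since $R_+^n(x)\to R_+^\infty(x)$ is finite and $V$ is bounded. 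As $\int f\,d\nu_n=\frac{1}{n}S_n(f)(x)$, truncating $f$ from below by $f_K:=\max\{f,-K\}$ (as in the proof of Lemma \ref{pressure}) and letting $K\to\infty$ via the monotone convergence theorem yields $\int f\,d\nu\ge m(f)$; $\sigma$-invariance then turns this into equality, so $\nu$ is maximizing. By the uniqueness hypothesis $\nu=\mu$, hence the entire sequence $\nu_n$ converges weakly-$*$ to $\mu$.

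Finally, for any $p\in\operatorname{supp}(\mu)$ and any open neighborhood $U$ of $p$, the portmanteau theorem gives $\liminf_n\nu_n(U)\ge\mu(U)>0$, which is incompatible with $\sigma^j(x)\in U$ for only finitely many $j$ (that would yield $\nu_n(U)=O(1/n)$). Hence the orbit visits $U$ infinitely often, so $p$ is an accumulation point. The main obstacle is the interplay between the non-compactness of $\Sigma_A(\mathbb{N})$ and the fact that $f$ is unbounded below: both the existence of accumulation measures of $\nu_n$ and the passage $\int f\,d\nu_n\to \int f\,d\nu$ can fail in general, and both are resolved by the coerciveness argument that confines the orbit tail to a compact subshift.
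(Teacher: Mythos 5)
Your proof is correct and follows essentially the same strategy as the paper: confine the orbit to a compact subshift via coerciveness of $f$ (equivalently $R_-$), form the empirical measures, identify the weak-$*$ accumulation point as the unique maximizing measure $\mu$, and conclude via a mass-on-neighborhoods argument. One small sharpening worth noting: since each term $R_+(\sigma^j(x))$ is non-negative and bounded by $R_+^\infty(x)$, the \emph{entire} orbit (not just its tail) already lies in a compact subshift, which also makes the truncation/monotone-convergence step dispensable because $f$ restricted to that compact set is bounded.
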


%\begin{corollary} \label{coro1}
%\end{corollary}
\begin{proof}
We define for each  $n\in \mathbb{N}$ the
probability measure
$\mu_n=\frac{1}{n}\sum_{j=0}^{n-1}\delta_{\sigma^j(x)}.$
Using that $f$ is coercive and $V$ is bounded we get that $R_-= f+ V-V\circ\sigma -m(f)$ is coercive. Particularly, as $R_+^\infty(x)<\infty$ we conclude that $x=(x_0,x_1,x_2,...)$ where all the symbols $x_i$ belong to a finite alphabet $B$. This shows that the probabilities $\mu_n$ have support in a same compact set. Particularly, there exist accumulation measures in the weak* topology. It is easy to see that any accumulation measure is invariant. If $M=R_{+}^\infty(z)$ then $$0\leq \int R_{+}\,d\mu_n = \frac{1}{n}R_{+}^{n}(x) \leq \frac{M}{n}\to 0.$$ Therefore, the accumulation measure maximizes the integral of $f$ and so coincides with $\mu$.  Thus, for a fixed point $p \in \operatorname{supp}(\mu)$,  $\mu_n(\{x\in \Sigma_A(\mathbb{N}):d(x,p)\leq \epsilon)$ is positive for sufficiently large $n$. Therefore $p$ is an accumulation point of $\sigma^n(x)$.

\end{proof}
In order to conclude the proof of the Theorem \ref{teoremaA} we present the following proposition.

\begin{proposition}
%Suppose there exist the limit $\lim\limits_{\beta_i\to\infty}\frac{1}{\beta_i}\log(\mu_{\beta_i}(C))$ for any cylinder $C$. Then \textbf{the probabilities $\mu_{\beta_i}$ satisfies a $L.D.P.$ with deviation function}
%$$\tilde{I}(x)=\lim\limits_{n \rightarrow \infty}\sum_{j=0}^{n-1}R_+(\sigma^j(x)).$$
%Particularly, for any cylinder $C$ we have
For any cylinder $C$ we have
\begin{equation*}
\lim_{\beta \rightarrow \infty}\frac{1}{\beta}\log \mu_{\beta}(C)=-\inf_{x\in C}\sum_{j=0}^\infty R_+(\sigma^j(x)).
\end{equation*}
\end{proposition}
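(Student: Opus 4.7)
The plan is to show that every subsequence $\beta_i\to\infty$ admits a sub-subsequence along which $\frac{1}{\beta_i}\log\mu_{\beta_i}(C)\to-\inf_{x\in C}R_+^\infty(x)$; since $\frac{1}{\beta}\log\mu_\beta(C)$ is bounded in $\beta$ by Lemma~\ref{gibbsinequality}, this forces the full limit to exist and equal the asserted value. A Cantor diagonal argument on the countable family of cylinders extracts a sub-subsequence along which $\frac{1}{\beta_i}\log\mu_{\beta_i}(C')$ converges for every cylinder $C'$, and a further extraction via Lemma~\ref{pressure} and uniqueness of the maximizing measure ensures $\mu_{\beta_i}\to\mu$ weak-$*$; relabel this subsequence as $\beta_i$, and let $\tilde I\geq 0$ be the lower semi-continuous function supplied by Proposition~\ref{devfunction}, so $\lim_i\frac{1}{\beta_i}\log\mu_{\beta_i}(C')=-\inf_{x\in C'}\tilde I(x)$. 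Iterating Proposition~\ref{propoB} via the telescoping $\log\frac{\mu_\beta[x_j\ldots x_n]}{\mu_\beta[x_0\ldots x_n]}=\sum_{k=0}^{j-1}\log\frac{\mu_\beta[x_{k+1}\ldots x_n]}{\mu_\beta[x_k\ldots x_n]}$ and passing to limits in $\beta$ and then $n$ yields $\tilde I(x)-\tilde I(\sigma^j(x))=R_+^j(x)$ for all $j\geq 0$; since $\tilde I\geq 0$, letting $j\to\infty$ gives $\tilde I(x)\geq R_+^\infty(x)$, and hence $\inf_C\tilde I\geq\inf_C R_+^\infty$.

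Next I show $\tilde I\equiv 0$ on $\operatorname{supp}(\mu)$. Weak-$*$ convergence implies $\mu_{\beta_i}(D)\to\mu(D)$ for every cylinder $D$; when $D\cap\operatorname{supp}(\mu)\neq\emptyset$, $\mu(D)>0$ and hence $\inf_{x\in D}\tilde I(x)=0$. On the other hand, if $R_+^\infty(x)<\infty$, Proposition~\ref{propoA} says every $p\in\operatorname{supp}(\mu)$ is an accumulation point of $\{\sigma^n(x)\}$, and the lower semi-continuity of $\tilde I$ applied to the non-increasing sequence $\tilde I(\sigma^n(x))=\tilde I(x)-R_+^n(x)\searrow \tilde I(x)-R_+^\infty(x)$ yields $\tilde I(p)\leq \tilde I(x)-R_+^\infty(x)$. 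Setting $C^*:=\sup_{p\in\operatorname{supp}(\mu)}\tilde I(p)$ and noting $\tilde I(x)=\infty$ whenever $R_+^\infty(x)=\infty$, we obtain $\tilde I\geq C^*$ everywhere; combined with $\inf_D\tilde I=0$, this forces $C^*=0$ and hence $\tilde I\equiv 0$ on $\operatorname{supp}(\mu)$.

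For the reverse inequality $\inf_C\tilde I\leq\inf_C R_+^\infty$, fix $x\in C$ with $R_+^\infty(x)<\infty$, $\epsilon>0$, and any $p\in\operatorname{supp}(\mu)$. By the Walters bound $\operatorname{Var}_{n+k}(R_+^n)\leq\operatorname{Var}_{n+k}(S_n f)+M_{n+k}+M_k$ of Lemma~\ref{convergeR}, one can choose $L$ large enough that $\operatorname{Var}_{N+L}(R_+^N)<\epsilon$ for every $N\geq 0$; by Proposition~\ref{propoA} there then exists $N\geq |C|$ with $\sigma^N(x)\in[p_0\ldots p_{L-1}]$. Set $y:=(x_0,\ldots,x_{N-1},p_0,p_1,\ldots)$: admissibility follows from $x_N=p_0$, $y\in C$ since $N\geq |C|$, $\sigma^N(y)=p$, and $y$ agrees with $x$ on the first $N+L$ coordinates, so $|R_+^N(y)-R_+^N(x)|<\epsilon$. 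The telescoping identity applied to $y$ together with $\tilde I(p)=0$ then yields $\tilde I(y)=R_+^N(y)+\tilde I(p)=R_+^N(y)<R_+^N(x)+\epsilon\leq R_+^\infty(x)+\epsilon$, establishing $\inf_C\tilde I\leq\inf_C R_+^\infty$. The main obstacle is the second step: showing $\tilde I\equiv 0$ on $\operatorname{supp}(\mu)$ requires delicately combining weak-$*$ convergence on cylinders with Proposition~\ref{propoA}, lower semi-continuity of $\tilde I$, and the monotonicity of $\tilde I\circ\sigma^n$ to preclude a positive constant value of $\tilde I$ on the support, which would otherwise obstruct the identification of the rate function.
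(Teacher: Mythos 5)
Your proof is correct and follows essentially the same route as the paper: boundedness of $\frac{1}{\beta}\log\mu_\beta(C)$ via Lemma~\ref{gibbsinequality}, a Cantor diagonal extraction so that Proposition~\ref{devfunction} supplies a candidate rate function $\tilde I$, telescoping Proposition~\ref{propoB} to get $\tilde I(x)=R_+^n(x)+\tilde I(\sigma^n(x))$, and then Proposition~\ref{propoA} combined with lower semi-continuity and the Walters variation bounds of Lemma~\ref{convergeR} to prove $\tilde I=R_+^\infty$. The one cosmetic difference is that you establish $\tilde I\equiv 0$ on $\operatorname{supp}(\mu)$ through a slightly roundabout argument with $C^*=\sup_{p\in\operatorname{supp}\mu}\tilde I(p)$, whereas the paper reads this off directly from the definition of $\tilde I$ once $\mu_{\beta_i}\to\mu$ weak-$*$ gives $\mu_{\beta_i}([y_0\ldots y_n])\to\mu([y_0\ldots y_n])>0$ for $y\in\operatorname{supp}(\mu)$.
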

\begin{proof}
{Fix a cylinder $C$. From Lemma \ref{gibbsinequality} the family  $\frac{1}{\beta} \log \mu_{\beta}(C)$ is bounded. We will show that any accumulation point  is equal to $-\inf\limits_{x\in C}\sum\limits_{j=0}^\infty R(\sigma^j(x))$. In this way we suppose that $\frac{1}{\beta_i} \log \mu_{\beta_i}(C)$ converges. From Cantor's diagonal argument we can suppose the limit exists for any cylinder set. Then we assume that the probabilities $\mu_{\beta_i}$ satisfies a $L.D.P.$ with deviation function $\tilde{I}$ given in Proposition \ref{devfunction}.  In order to finish the proof we need to show that}
	\begin{equation*}
	\tilde{I}(x)=R_+^\infty(x).
	\end{equation*}

	%$$\tilde{I}(x)=\lim\limits_{n \rightarrow \infty}\sum_{j=0}^{n-1}R_+(\sigma^j(x)).$$}
	
 Proposition $\ref{propoB}$ implies
\begin{equation*}
\lim_{n \rightarrow \infty} \lim_{\beta_i \rightarrow \infty} \frac{1}{\beta_i}
\log \frac{\mu_{\beta_i}[x_1\ldots x_n]}{\mu_{\beta_i}[x_0\ldots x_n]}=R_+(x).
\end{equation*}
Then
\begin{equation*}
R_+=\lim_{n \rightarrow \infty}
\big( \lim_{\beta_i \rightarrow \infty}\frac{1}{\beta_i}
\log \mu_{\beta_i}[x_1\ldots x_n]-\lim_{\beta_i \rightarrow \infty} \frac{1}{\beta_i}
\log \mu_{\beta_i}[x_0\ldots x_n]\big)
\end{equation*}
and so
\begin{equation*}
\tilde{I}(x)=R_+(x)+\tilde{I}(\sigma(x)).
\end{equation*}

Therefore, for each  $n$
\begin{equation*}
\tilde{I}(x)=R_+^n(x)+\tilde{I}(\sigma^n(x)),
\end{equation*}
so, from Proposition \ref{devfunction}, if  $n \rightarrow \infty$ we get
\begin{equation*}
\tilde{I}(x)=R_+^\infty(x)+\tilde{I}_0(x).
\end{equation*}
This concludes the proof that  $\tilde{I}(x)\geq R_+^\infty(x)$.

In order to proof the opposite inequality we can suppose
$R_+^\infty(x)<\infty$.
Fix $x=(x_0,x_1,x_2,\ldots)$ such that $R_+^\infty(x)<\infty$ and
 $y \in \operatorname{supp}(\mu)$. Observe that $\tilde{I}(y)=0$.
Furthermore, from Proposition \ref{propoA},  $y$
 is an accumulation point of the sequence $\{\sigma^n x\}$. Then, fixed $k$, there exists  $m$
 such that  $x_{m}x_{m+1}\ldots x_{m+k-1}=y_0y_1\ldots y_{k-1}$.
As $\tilde{I}$ is lower semi-continuous,
\begin{equation}\label{desigualdadeIR}
\tilde{I}(x)\leq \liminf_{m \rightarrow \infty}\tilde{I}(x_0\ldots x_{m-1}y) =  \liminf_{m \rightarrow \infty}(R_+^m(x_0\ldots x_{m-1}y)).
\end{equation}

Applying the Lemma \ref{convergeR} we get
\begin{align*}
R_+^m(x_0\ldots x_{m-1}y)&\leq R_+^m(x)+\sup_{n\geq 1}\operatorname{Var}_{n+k}(R_+^{n})\\
&\leq R_+^{\infty}(x)+\sup_{n\geq 1}[\operatorname{Var}_{n+k}(S_n(f)) +M_{n+k} +M_k].
\end{align*}

Thus, letting $k \rightarrow \infty$  and using (\ref{desigualdadeIR})  we obtain
\begin{equation*}
\tilde{I}(x)\leq R^\infty(x).
\end{equation*}

\end{proof}

We point out that in \cite{CLO} is presented an example due to R. Leplaideur where the maximizing measure $\mu$ of potential $f$ is unique, but there is a point $x$ outside of  
the $\operatorname{supp}{(\mu)}$ satisfying $R_+^{\infty}(x) = 0$. It follows that there are examples where $I$ can be zero outside of the support.

\begin{proposition}
Suppose that $f$ has a unique maximizing measure $\mu$. Then
\[ \Omega(f,\sigma)= \{x \in \Sigma_A(\mathbb{N}): I(x)=0\}.  \]	
\end{proposition}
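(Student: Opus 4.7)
The plan is to reduce both inclusions to the symmetric identity $S_f(\sigma^n x, x) = -S_n \varphi(x)$, where $\varphi := f - m(f)$, by combining the triangle-type inequality of Proposition \ref{propo-propiedades}(3) with the representation $V(y) - V(a) = S_f(a, y)$ from Proposition \ref{twosubaction} (valid for any $a \in \operatorname{supp}(\mu)$). Before starting I would observe that, since $R_+ \geq 0$, we have $I(x) = 0$ if and only if $R_-^n(x) = 0$ for every $n \geq 1$, equivalently $V(\sigma^n x) - V(x) = S_n \varphi(x)$ for all $n$.

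For the inclusion $\{I = 0\} \subseteq \Omega(f,\sigma)$, I would assume $I(x) = 0$ and use Proposition \ref{propoA} (applicable because $R_+^\infty(x) = 0 < \infty$) to pick, for any fixed $p \in \operatorname{supp}(\mu)$, a subsequence $n_k \to \infty$ with $\sigma^{n_k}(x) \to p$. I would then build the explicit preimage $z_k := (x_0, \ldots, x_{n_k - 1}, p)$, admissible for large $k$ because eventually $x_{n_k} = p_0$; it satisfies $\sigma^{n_k}(z_k) = p$ and $d(z_k, x) \to 0$. Using Walters I get $|S_{n_k}\varphi(z_k) - S_{n_k}\varphi(x)| \to 0$, while $I(x) = 0$ rewrites $S_{n_k}\varphi(x) = V(\sigma^{n_k} x) - V(x) \to V(p) - V(x)$. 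Hence $S_f(x, p) \geq V(p) - V(x)$, and Proposition \ref{propo-propiedades}(1) yields the reverse inequality. Proposition \ref{twosubaction} with $a = p$ then gives $S_f(p, x) = V(x) - V(p)$, and the triangle inequality forces $S_f(x, x) \geq S_f(x,p) + S_f(p,x) = 0$, which combined with Proposition \ref{propo-propiedades}(2) delivers $S_f(x,x) = 0$.

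For the reverse inclusion, I would fix $x \in \Omega(f,\sigma)$ and any $a \in \operatorname{supp}(\mu)$. From Proposition \ref{twosubaction} together with Proposition \ref{propo-propiedades}(3) applied to $(a, \sigma^n x, x)$,
\[
V(\sigma^n x) - V(x) \;=\; S_f(a, \sigma^n x) - S_f(a, x) \;\leq\; -S_f(\sigma^n x, x),
\]
so combined with the sub-action bound $V(\sigma^n x) - V(x) \geq S_n\varphi(x)$ everything will collapse to equality once I establish $S_f(\sigma^n x, x) \geq -S_n\varphi(x)$. I would produce this by choosing, from $S_f(x,x) = 0$, a sequence $z_k \to x$ and $N_k$ with $\sigma^{N_k}(z_k) = x$ and $S_{N_k}\varphi(z_k) \to 0$. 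For fixed $n$ and large $k$, $\sigma^n(z_k) \to \sigma^n(x)$, $\sigma^{N_k - n}(\sigma^n z_k) = x$, and continuity gives
\[
S_{N_k - n}\varphi(\sigma^n z_k) \;=\; S_{N_k}\varphi(z_k) - S_n \varphi(z_k) \;\longrightarrow\; -S_n\varphi(x),
\]
which supplies the required bound. The resulting identities $V(\sigma^n x) - V(x) = S_n \varphi(x)$ for all $n$ give $R_-(\sigma^j x) = 0$ for every $j \geq 0$, hence $I(x) = 0$.

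The only delicate points will be the two passages to the limit $S_{n_k}\varphi(z_k) - S_{n_k}\varphi(x) \to 0$ and $S_n\varphi(z_k) \to S_n\varphi(x)$; both rest on the Walters condition, which guarantees $\sup_m \operatorname{Var}_{m + L}(S_m f) \to 0$ as $L \to \infty$, so that long prefix agreement between $z_k$ and $x$ transfers cheaply to Birkhoff sums. Everything else is a direct manipulation of the structural facts in Propositions \ref{propo-propiedades} and \ref{twosubaction}, together with the orbit-accumulation statement of Proposition \ref{propoA}.
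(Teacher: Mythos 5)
Your argument for $\{I=0\}\subseteq\Omega(f,\sigma)$ is essentially the paper's: both use Proposition \ref{propoA} to obtain an accumulation point of $\{\sigma^n x\}$ in $\operatorname{supp}(\mu)$, build the preimages $(x_0,\ldots,x_{n_k-1},p)$, transfer the Birkhoff sums via Walters, identify the limit with $V(p)-V(x)$ using $I(x)=0$, and close with the triangle inequality; the paper simply takes $V=S_f(b,\cdot)$ outright, where you invoke Propositions \ref{propo-propiedades}(1) and \ref{twosubaction} in separate steps. For the reverse inclusion $\Omega(f,\sigma)\subseteq\{I=0\}$ you take a genuinely different route. The paper fixes an arbitrary bounded calibrated $V$, chooses $x_k\to x$ with $\sigma^{n_k}(x_k)=x$ and $S_{n_k}(f-m(f))(x_k)\to 0$, and telescopes the sub-action inequality along $\sigma^{m+1}(x_k),\ldots,\sigma^{n_k+m-1}(x_k)$ to conclude $f(\sigma^m x)+V(\sigma^m x)-V(\sigma^{m+1}x)=0$ for every $m$ --- notably without appealing to uniqueness of $\mu$ for this direction. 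You instead lean on the representation $V=V(a)+S_f(a,\cdot)$ of Proposition \ref{twosubaction} (which does require uniqueness, available here) and the triangle inequality, reducing the whole direction to the single estimate $S_f(\sigma^n x,x)\geq -S_n(f-m(f))(x)$, obtained by shifting the returning orbits provided by $S_f(x,x)=0$. Both are correct; the paper's is leaner in hypotheses for this direction, yours makes the algebra of the Ma\~n\'e potential more transparent. One small point to fill in: the passage from $(z_k,N_k)$ to $(\sigma^n z_k,N_k-n)$ tacitly needs $N_k>n$ eventually; when the $N_k$ stay bounded the usual dichotomy gives $x$ periodic of some period $N$, and then $S_f(\sigma^n x,x)\geq -S_n(f-m(f))(x)$ follows at once by taking $u=\sigma^n x$ with $\sigma^{N-n}(u)=x$, so nothing is lost, but it should be said.
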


\begin{proof} We can suppose $m(f)=0$.
Let $x \in 	\Omega(f,\sigma)$, that means, $S_f(x,x)=0$. Then there exist $x_k \to x$ and $n_k \to +\infty$ such that
\[ \sigma^{n_k}(x_k)=x,\,\, \lim_{k \to +\infty} \sum_{j=0}^{n_k-1}f(\sigma^{j}(x_k))=0. \]
Let $V$ be any bounded calibrated sub-action. For each fixed $m$ we will show that $f(\sigma^m(x))+V(\sigma^m(x))-V(\sigma^{m+1}(x))=0$, proving that $I(x)=0$.
As $n_k\to +\infty$ we can suppose $m< n_k$. Then  
\begin{align*}
V(\sigma^m(x))  &= V\circ\sigma(\sigma^{n_k+m-1}(x_k))  \\
&\geq V(\sigma^{m+1}(x_k)) + \sum_{j=m+1}^{n_k+m-1}f(\sigma^{j}(x_k)) \\
&= V(\sigma^{m+1}(x_k)) +  \sum_{j=0}^{n_k-1}f(\sigma^{j}(x_k)) - \sum_{j=0}^{m}f(\sigma^{j}(x_k))+\sum_{j=n_k}^{n_k+m-1}f(\sigma^{j}(x_k)) \\
&= V(\sigma^{m+1}(x_k)) +  \sum_{j=0}^{n_k-1}f(\sigma^{j}(x_k)) - \sum_{j=0}^{m}f(\sigma^{j}(x_k)) +\sum_{j=0}^{m}f(\sigma^{j}(x)) - f(\sigma^{m}(x)). 
\end{align*}	
Making $k\to \infty$ we obtain
\[ V(\sigma^m(x))  \geq V(\sigma^{m+1}(x)) -  f(\sigma^m(x)). \]
As $V$ is a calibrated sub-action, the opposite inequality is also verified, therefore   $f(\sigma^m(x))+V(\sigma^m(x))-V(\sigma^{m+1}(x))=0$. This proves that $I(x)=0$.	

Now we suppose that $I(x)=0$. We want to show that $S_{f}(x,x)=0$.
From Proposition \ref{propoA} there exist $n_k \to \infty$ and $b \in \operatorname{supp}(\mu)$ such that $\sigma^{n_k}(x)\to b$. From Proposition \ref{propo-propiedades}, $b \in \Omega(f,\sigma)$. Let $V(z) :=S_f(b,z)$. It is a bounded calibrated sub-action by Proposition \ref{twosubaction}.
%As $I(x)= 0$, we have
%\[V(\sigma^{n_k}(x))- V(x) =\sum_{j=0}^{n_k-1}f(\sigma^{j}(x)) \leq  S_{f}(x,\sigma^{n_k}(x)).\]
%On the other hand, from Proposition \ref{propo-propiedades}
%\[V(\sigma^{n_k}(x)) -V(x) \geq  S_{f}(x,\sigma^{n_k}(x)). \]
%It follows that 
%\begin{equation}\label{eq1}
% V(\sigma^{n_k}(x)) -V(x) = \sum_{j=0}^{n_k-1}f(\sigma^{j}(x))= S_{f}(x,\sigma^{n_k}(x)).
%\end{equation} 

We write $x=(x_1,x_2,x_3,...)$ and define $b_{n_k}:=(x_1,...,x_{n_k},b)$. Then $\sigma^{n_k}(b_{n_k})=b$ and as $d(\sigma^{n_k}(x),b)\to 0$, we can suppose $d(b_{n_k},x)< r^{n_k+2}$. It follows that
\begin{align*}
S_f(x,b) &\geq \limsup_{k\to\infty} S_{n_k}f(b_{n_k}) \geq \limsup_{k\to\infty} (S_{n_k}f(x) - \sup_{l\geq 1}\operatorname{Var}_{n_k+l}(S_{n_k}f)) \\
&= \limsup_{k\to\infty} (S_{n_k}f(x)) \stackrel{I(x)=0}{=} \limsup_{k\to\infty}(V(\sigma^{n_k}(x)) -V(x)))=V(b)-V(x)  . 
\end{align*}
%Then, applying (\ref{eq1}), this shows that 
%\begin{equation}\label{eq2}
%S_{f}(x,b)\geq \limsup_{k\to\infty}  S_{f}(x,\sigma^{n_k}(x)).
%\end{equation}  
%From (\ref{eq1}) and (\ref{eq2}) we obtain
%\[V(b) -V(x) \leq S_{f}(x,b).\]
As by definition $V(\cdot)=S_f(b,\cdot)$, we obtain
\[S_f(x,b)\geq S_f(b,b)-S_f(b,x) .\]
Applying the Proposition \ref{propo-propiedades} again, 
\[ 0 =S_{f}(b,b) \leq S_{f}(x,b)+S_f(b,x)  \leq S_f(x,x) \leq 0.\]
This shows that $S_f(x,x)=0$ and completes the proof.	
\end{proof}

\section{ Beyond the BIP property and concluding remarks}

 We should ask what it is possible to do beyond the 
BIP case. 
We already know that for transitive Markov shifts over
the alphabet $\N$ we have at most one
equilibrium measure \cite{Daon, Sarig04} for the class of potentials
defined for us. But, eventually a measure could not exist if
the temperature is too low. More specifically we know that:

\begin{theorem} [Sarig, \cite{Sarig05}]
  Let $\Sigma_A({\N})$ be a Renewal shift and  $f$ locally H\"older 
  continuous such that
  $\sup f<\infty$. Then, there exists a constant $\beta_c \in (0,\infty]$ such
  that
  \begin{itemize}
    \item For $0<\beta<\beta_c$ there exists a probability measure 
     $\mu_\beta$
    corresponding to $\beta f$. For $\beta>\beta_c$ 
    there is no an equilibrium probability measure  
    corresponding to $\beta f$.
    \item $P(\beta f)$ is real analytic in $(0,\beta_c)$
    and linear in $(\beta_c,\infty)$. At $\beta_c$,
    $P(\beta f)$ is continuous but not analytic. 
  \end{itemize}
  \end{theorem}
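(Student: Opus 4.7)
The plan is to invoke Sarig's dichotomy for thermodynamic formalism on countable Markov shifts, which characterizes the existence of a Gibbs/equilibrium measure via the convergence behaviour of a generating function of first returns to a base state. The Renewal shift has a distinguished vertex $0$ with the property that every orbit which does not escape to infinity returns to $[0]$ infinitely often, and first returns to $[0]$ have a particularly rigid combinatorial form which will be exploited.

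First I would induce on the cylinder $[0]$. Let $R\colon [0]\to \N$ be the first return time to $[0]$ and $T=\sigma^{R}$ the induced map, which is conjugate to a full shift over a countable alphabet. The induced potential is $\widehat{f}(x)=\sum_{k=0}^{R(x)-1} f(\sigma^{k} x)$. Define the first-return partition function
\[
Z_{n}^{*}(\beta)=\sum_{\substack{\sigma^{n}(x)=x\\ R(x)=n}} e^{\beta S_{n} f(x)}\,\chi_{[0]}(x),
\]
and the discriminant series $\Lambda(\beta,t)=\sum_{n\geq 1}Z_{n}^{*}(\beta)\,e^{-tn}$. Sarig's classification then provides a clean dichotomy: if there exists $t\in\R$ with $\Lambda(\beta,t)=1$ then $\beta f$ is positive recurrent, $P(\beta f)=t$, and an equilibrium probability measure exists; otherwise $P(\beta f)=t^{*}(\beta):=\inf\{t:\Lambda(\beta,t)<\infty\}$ and no equilibrium probability measure exists.

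Second, exploiting strict monotonicity of $t\mapsto\Lambda(\beta,t)$ and convexity of $\beta\mapsto\Lambda(\beta,t)$, I would set
\[
\beta_{c}:=\sup\{\beta>0 : \Lambda(\beta,t^{*}(\beta))\geq 1\}\in (0,\infty].
\]
For $0<\beta<\beta_{c}$ the function $\Lambda(\beta,\cdot)$ reaches $1$ inside its region of convergence, so $\beta f$ is positive recurrent and Sarig's generalized Ruelle--Perron--Frobenius theorem applied to the induced system yields $\mu_{\beta}$. Real analyticity of $P(\beta f)$ on $(0,\beta_{c})$ follows from the implicit function theorem: $(\beta,t)\mapsto\Lambda(\beta,t)$ is jointly real analytic on its convergence region, $\partial_{t}\Lambda<0$ strictly, and the relation $\Lambda(\beta,P)=1$ implicitly defines $P$ as an analytic function of $\beta$.

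Third, for $\beta>\beta_{c}$ the dichotomy gives transience, hence no equilibrium measure, and $P(\beta f)=t^{*}(\beta)$. The rigid structure of first returns in the Renewal shift forces any first-return orbit of length $n$ to consist of a single excursion up to level $n-1$ and back; for locally H\"older $f$ with $\sup f<\infty$ this constrains $S_{n}f$ along such orbits uniformly enough that
\[
\tfrac{1}{n}\log Z_{n}^{*}(\beta)\;\longrightarrow\; \beta\,s^{*}
\]
for a constant $s^{*}$ determined by the asymptotic behaviour of $f$ along long excursions. The radius of convergence of $\Lambda(\beta,\cdot)$ in $t$ is therefore $\beta s^{*}$, giving $P(\beta f)=\beta s^{*}$, linear in $\beta$. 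Continuity at $\beta_{c}$ follows from convexity and finiteness of $P(\beta f)$ on both sides, while non-analyticity at $\beta_{c}$ is obtained by computing the left derivative (which equals $\int f\,d\mu_{\beta_{c}^{-}}$ on the positive recurrent branch) and the right derivative $s^{*}$, and verifying they disagree.

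The main obstacle I anticipate is the third step: making the ``single long excursion dominates'' heuristic precise and thereby establishing the linear form of $P(\beta f)$ in the transient regime with an identifiable slope $s^{*}$. This requires uniform control of $S_{n}f$ along all admissible first-return orbits, combining the H\"older regularity of $f$ with a careful combinatorial bookkeeping on the Renewal graph; the remaining steps are then essentially a direct application of Sarig's machinery together with convex-analytic arguments.
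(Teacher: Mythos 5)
This theorem is not proved in the paper; it is stated in Section~5 as background, cited directly from Sarig's \emph{Phase Transitions for Countable Topological Markov Shifts} \cite{Sarig05}. So there is no in-paper proof against which to compare your proposal --- what you have written is a reconstruction of the strategy in the cited source, and on that footing it is broadly the right outline (inducing on $[0]$, Sarig's recurrence classification via a discriminant series, RPF for the induced full shift, convex-analytic arguments for the pressure curve). A few points deserve tightening, however.

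First, the classification you invoke is a \emph{trichotomy}, not a dichotomy: $\Lambda(\beta,t^{*}(\beta))\geq 1$ yields \emph{recurrence}, which still splits into positive recurrent and null recurrent according to whether $\sum_{n}n\,Z_{n}^{*}(\beta)\,e^{-t^{*}n}$ is finite. Only positive recurrence produces an equilibrium \emph{probability}; in the null recurrent mode the conservative eigenmeasure is infinite and no equilibrium state exists. If you silently identify ``$\Lambda$ reaches $1$'' with positive recurrence, the first bullet of the theorem does not follow as stated, and you must argue separately that on $(0,\beta_c)$ one in fact lands in the positive recurrent case (for Renewal shifts this is a genuine lemma in Sarig's paper, using analyticity of $\Lambda$ inside its domain). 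Second, the combinatorial fact that does the real work in step three is sharper than ``rigid first-return structure'': on the Renewal graph there is \emph{exactly one} admissible first-return loop to $[0]$ of each length $n$, namely $0\to n-1\to n-2\to\cdots\to 0$. Hence $Z_{n}^{*}(\beta)$ is a single exponential $e^{\beta S_{n}f(y_{n})}$ rather than a sum, so $t^{*}(\beta)=\beta\,\limsup_{n}\tfrac{1}{n}S_{n}f(y_{n})$ for free, with no ``dominant excursion'' estimate to make. Establishing the \emph{limit} (rather than limsup) and identifying the slope is where the H\"older regularity of $f$ enters. Third, your argument for non-analyticity at $\beta_c$ via a jump in the first derivative is not reliable: the one-sided derivatives may well agree. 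The correct and more robust argument is that on $(0,\beta_c)$ the pressure is strictly convex whenever $\beta_c<\infty$ (otherwise $f$ is cohomologous to a constant and $\beta_c=\infty$), while on $(\beta_c,\infty)$ it is affine; an analytic function cannot have positive second derivative on one side of a point and vanish identically on the other, so analyticity fails at $\beta_c$ regardless of the first derivative.
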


  \begin{theorem}[Iommi, \cite{Iommi}]\label{Iommi}
  Let $\Sigma_A({\N})$ be a Renewal shift and $f$ locally H\"older continuous with
  $\sup f<\infty$. Then
  \begin{itemize}
    \item For $\beta_c=\infty$, there exists a measure $f$-maximizing.
    \item If $\beta_c<\infty$, there is no a measure $f$-maximizing.
  \end{itemize}
 \end{theorem}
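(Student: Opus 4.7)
The theorem asserts an equivalence between the absence of a low-temperature phase transition (namely $\beta_c = \infty$) and the existence of an $f$-maximizing measure on a Renewal shift. My plan is to prove the two implications separately, exploiting the combinatorial structure of the Renewal shift: every admissible loop passes through the base vertex $0$, and the first-return map to $[0]$ is conjugate to a full shift on countably many symbols indexed by loop length. Via this inducing scheme, the Gurevich pressure $P(\beta f)$ is governed by a partition-function equation $Z_\beta(s) := \sum_{n\geq 1} e^{\beta \sup_{\ell_n} S_n f - sn}=1$, and $\beta_c$ is precisely the supremum of $\beta$ for which this equation admits a solution $s\in\R$.

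For the implication $\beta_c = \infty \Rightarrow$ maximizing measure exists, I would follow the pattern of Lemma \ref{pressure}. Since equilibrium measures $\mu_\beta$ exist for every $\beta>0$ and $P(\beta f)/\beta \to m(f)$, the same monotonicity-of-entropy argument gives $\int f\, d\mu_\beta \to m(f)$ as $\beta\to\infty$. Although the Renewal shift lacks the BIP property, the base vertex $0$ functions as a distinguished recurrent symbol, and Sarig's Gibbs estimates for the induced system provide a uniform lower bound $\mu_\beta([0])\geq c>0$. This is enough to guarantee tightness of $\{\mu_\beta\}$ and to show that any weak* accumulation point $\mu_\infty$ is an $f$-maximizing probability measure (handling the fact that $f$ is not bounded below by truncating $f_n:=\max(f,-n)$ and invoking monotone convergence as in the proof of item 3 of Lemma \ref{pressure}).

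For the implication $\beta_c<\infty \Rightarrow$ no maximizing measure, I would argue by contradiction. For $\beta>\beta_c$ the pressure is linear, $P(\beta f)=\beta m(f)+C$, and no equilibrium measure exists. If a maximizing $\mu^*$ existed, then since $f$ is locally H\"older and $\sup f<\infty$, an adaptation of the Bissacot--Freire argument would locate $\operatorname{supp}(\mu^*)$ on a finite subshift $\Sigma'\subset \Sigma_A(\N)$. The pressure $P(f|_{\Sigma'})$ equals $m(f)$ and the system on $\Sigma'$ admits a positive Ruelle eigenfunction $h_\beta^{\Sigma'}$ for every $\beta$. Using the Renewal graph structure (and specifically the reduction to the first-return partition function $Z_\beta(s)$), one can promote these eigenfunctions to positive solutions of $L_{\beta f}h_\beta=\lambda_\beta h_\beta$ on the full Renewal shift for every $\beta$, contradicting $\beta_c<\infty$.

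The principal obstacle lies in the second direction, specifically in the extension of the eigenfunction from $\Sigma'$ to the whole Renewal shift. The infinite-alphabet tail of the Renewal graph carries a potential that, although bounded above, need not be coercive in the strong sense $\sup f|_{[n]}\to -\infty$, so the standard approach from the BIP theory (compare Lemma \ref{lemmasumabble}) does not transfer. One must instead control the tail contributions to $Z_\beta(s)$ via the specific combinatorics of Renewal loops and the assumption $\sup f<\infty$, a delicate analysis lying outside the BIP framework developed in the preceding sections of the paper.
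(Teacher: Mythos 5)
This theorem is not proved in the paper: it is quoted verbatim from Iommi's paper \cite{Iommi} and used as background in Section 5 (the discussion of what may lie beyond the BIP case). There is therefore no ``paper's own proof'' against which to compare your sketch; any judgment has to rest on the internal soundness of your argument.

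Taken on its own terms, the sketch is incomplete in both directions, not only in the second one you flag. For the implication $\beta_c=\infty\Rightarrow$ existence, you conclude tightness of $\{\mu_\beta\}$ from a uniform lower bound $\mu_\beta([0])\geq c>0$. That bound is not tightness: it only rules out that a weak* limit be the zero measure, and it does not prevent a positive fraction of the mass from escaping through the infinite ``tail'' of the Renewal graph, leaving a subprobability limit that need not be $f$-maximizing. Under BIP the escape of mass is controlled through Lemma~\ref{lemmasumabble} and the coercivity $\sup f|_{[i]}\to-\infty$ it yields; on a Renewal shift with only $\sup f<\infty$, coercivity is exactly what is missing, so the argument of item 3 of Lemma~\ref{pressure} (which you invoke) does not transfer. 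A correct proof has to control the return-time distribution of $\mu_\beta$ directly via the induced system over $[0]$ and the analytic behaviour of the induced pressure near $\beta_c$. For the implication $\beta_c<\infty\Rightarrow$ non-existence, you rely on Theorem~\ref{Bissacot} to localise $\operatorname{supp}(\mu^\ast)$ on a finite subalphabet, but that theorem also assumes coercivity, which is unavailable here for the same reason; and the step promoting a finite-subshift eigenfunction to a global positive eigenfunction of $L_{\beta f}$ is precisely the missing argument, as you acknowledge. The usual route instead compares, via the variational principle, the affine lower bound $P(\beta f)\geq h(\mu^\ast)+\beta m(f)$ afforded by a hypothetical maximizing $\mu^\ast$ with the strict geometry of the pressure (analytic on $(0,\beta_c)$, affine of slope $m(f)$ on $(\beta_c,\infty)$, convex throughout), together with the induced-pressure/discriminant analysis of the Renewal structure; it does not pass through eigenfunction extension at all. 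So the proposal identifies the right combinatorial object (the first-return system over the base vertex) but the mechanism it proposes does not close either implication.
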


\begin{theorem} [R. Bissacot and R. Freire Jr, \cite{BF}]\label{Bissacot}
Let $\sigma$ be the shift on $\Sigma_{A}(\N)$ with $A$ irreducible, $f: \Sigma_{A}(\N) \to \R$ be a function with bounded variation and coercive. Then, there is a finite set $\mathcal A \subset \N$ such that $A|_{\mathcal A \times \mathcal A}$ is irreducible and
\[
\sup_{\mu \in \mathcal M_{\sigma}(\Sigma_{A}(\N ))} \int f \ d\mu =\sup_{\mu \in \mathcal M_{\sigma}(\Sigma_{A}(\mathcal A ))} \int f \ d\mu .
\]
Furthermore, if $\nu$ is a maximizing measure, then
$\text{supp }\nu \subset  \mathcal M_{\sigma}(\Sigma_{A}(\mathcal A )).$
\end{theorem}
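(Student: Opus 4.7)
The plan has three main steps: establish existence of a maximizing measure via coercivity and tightness; use a bounded continuous sub-action to localize the support of every ergodic maximizer into a finite subalphabet; and pass to an irreducible component of the restricted transition matrix to conclude. For existence: summable variation plus coercivity give $M := \sup f < \infty$, and irreducibility of $A$ produces periodic orbits, making $\alpha := \sup_{\mu \in \mathcal M_\sigma(\Sigma_A(\N))} \int f \, d\mu$ finite. Given a maximizing sequence $(\mu_n)$, splitting integrals over $[x_0 \leq N]$ and $[x_0 > N]$ yields
\[
\int f \, d\mu_n \;\le\; M - \Bigl(M - \sup_{i>N}\sup f|_{[i]}\Bigr)\,\mu_n([x_0 > N]),
\]
and since $\sup_{i>N}\sup f|_{[i]} \to -\infty$, this forces $\mu_n([x_0 > N]) \to 0$ uniformly in $n$ as $N \to \infty$. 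Tightness follows; extract a weak-$*$ limit $\mu_\infty$. Truncating $f_k := \max(f,-k)$ (bounded continuous), bounded convergence for fixed $k$ and then monotone convergence in $k$ yield $\int f \, d\mu_\infty \geq \alpha$, so $\mu_\infty$ is maximizing.

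Next, fix an ergodic maximizer $\nu$ and suppose one has a continuous bounded sub-action $V$ with $\|V\|_\infty \leq K$ and $f + V - V\circ\sigma \leq \alpha$ on all of $\Sigma_A(\N)$. Integrating against $\nu$ forces $\nu$-a.e.\ equality, so on $\operatorname{supp}(\nu)$ one has $f = \alpha + V - V\circ\sigma \geq \alpha - 2K$. Coercivity then confines $x_0$ to the finite set $\mathcal A := \{i : \sup f|_{[i]} \geq \alpha - 2K\}$, and $\sigma$-invariance upgrades this to $\operatorname{supp}(\nu) \subset \Sigma_A(\mathcal A)$. Crucially $K$ can be chosen independently of $\nu$, so a single finite $\mathcal A$ contains the support of every ergodic maximizer, and by ergodic decomposition of every maximizer. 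A natural candidate $V$ is the Mañé potential $V(x) := S_f(a, x)$ anchored at some $a \in \operatorname{supp}(\mu_\infty)$ (in the spirit of Propositions \ref{propo-propiedades} and \ref{twosubaction}), where summable variation controls the triangle inequality for $S_f$ and coercivity its growth.

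To finish, replace $\mathcal A$ by the irreducible component of $A|_{\mathcal A \times \mathcal A}$ containing these common supports; such a component exists because each ergodic maximizer's support is itself transitive as an invariant subsystem. The inclusion $\mathcal M_\sigma(\Sigma_A(\mathcal A)) \subset \mathcal M_\sigma(\Sigma_A(\N))$ gives $\sup_{\mathcal M_\sigma(\Sigma_A(\mathcal A))} \int f \leq \alpha$, while $\mu_\infty \in \mathcal M_\sigma(\Sigma_A(\mathcal A))$ with $\int f \, d\mu_\infty = \alpha$ gives the reverse inequality, so the two suprema coincide; the support statement then follows directly from Step~2. The main obstacle is the construction of a uniformly bounded continuous sub-action \emph{without} the BIP hypothesis: Proposition \ref{propo-subaction} of the present paper produces $V$ as a limit of $\tfrac{1}{\beta}\log h_\beta$, whose two-sided bounds (Lemma \ref{propo2}) use BIP essentially. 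Bypassing this requires either a direct Mañé/Bousch-type orbit argument bounding $S_f(a,\cdot)$ using only summable variation and coercivity, or an approximation by an exhausting sequence of finite (hence BIP) sub-systems with uniform sub-action estimates that survive the limit.
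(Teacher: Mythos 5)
Your outline is conditional on exactly the point that makes this theorem nontrivial, and that condition is not discharged. Note first that the present paper does not prove this statement at all: it is quoted from \cite{BF}, whose ``dynamical proof'' deliberately avoids sub-actions -- it compares the ergodic average of an arbitrary invariant measure with that of a nearly optimal periodic orbit and uses coercivity plus the variation bound to cut out excursions through large symbols, which is why it works for merely irreducible shifts. Your Step 2, by contrast, needs a bounded continuous sub-action $V$ with a bound $K$ valid on all of $\Sigma_A(\N)$, and every mechanism you point to for producing it lives in the BIP world: Lemma \ref{propo2} and Proposition \ref{propo-subaction} use finite primitivity essentially (the Remark after Corollary \ref{equicontinuous} even warns that without BIP the eigenfunction can degenerate), and Proposition \ref{twosubaction} additionally assumes uniqueness of the maximizing measure, which is not a hypothesis here. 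The fallback you suggest, $V(x)=S_f(a,x)$, is circular in the irreducible non-BIP setting: boundedness of the Ma\~n\'e potential anchored at $a$ requires uniform control of $S_n(f-m(f))$ along connecting orbits from arbitrary symbols back to $a$, and such connecting words can be forced through regions where $f$ is arbitrarily negative and can have unbounded length -- ruling this out is essentially the localization statement you are trying to prove. So the central step of the argument is a genuine gap, which you acknowledge but do not close.

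There is also a concrete error in Step 3. You propose to replace $\mathcal A=\{i:\sup f|_{[i]}\geq \alpha-2K\}$ by ``the irreducible component of $A|_{\mathcal A\times\mathcal A}$ containing these common supports,'' but nothing guarantees that the supports of different ergodic maximizers (uniqueness is not assumed) lie in a single irreducible component of the restricted matrix; transitivity of each individual support does not make the union transitive. Passing to one component would then lose the statement that \emph{every} maximizing measure is supported in $\Sigma_A(\mathcal A)$. The correct repair goes the other way: enlarge $\mathcal A$ rather than shrink it -- since $A$ is irreducible and $\mathcal A$ is finite, for each ordered pair of symbols of $\mathcal A$ add the (finitely many) symbols of an admissible connecting word; the enlarged alphabet is still finite, $A$ restricted to it is irreducible, and it still contains all the supports, which also gives the equality of the two suprema once one maximizer is known to live there. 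Your Step 1 (tightness via the uniform bound on $\mu_n([x_0>N])$ together with shift-invariance, then truncation and monotone convergence) is fine and is indeed the standard existence argument, but as it stands the proof of the theorem is not complete.
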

 
Notice that the Renewal shifts are contained in the class of 
potentials where the Theorem $\ref{Bissacot}$ holds, thus, if we 
combine the two results, Theorem  $\ref{Iommi}$ and  Theorem $\ref{Bissacot}$,  
we conclude that there is no phase transition for  H\"older
and coercive potentials in Renewal shifts, 
therefore, it is expected that in this case there exists a Large
Deviation Principle as well.

%%%%%%%% REFERENCIAS

\section*{Acknowledgments}

The authors are very grateful to Renaud Leplaideur who pointed out a gap in the proof of Proposition
$\ref{propo-subaction}$.

{\footnotesize


\begin{thebibliography}{99}

\bibitem{BLT} A. Baraviera, A. O. Lopes and P. Thieullen,
 A Large Deviation Principle for the equilibrium states of H\"older Potentials: the zero temperature case,
 \emph{Stochastics and  Dynamics} \textbf{6}(2006), 77-96.

 \bibitem{BLL}
A. Baraviera, R. Leplaideur and  A.O. Lopes, Ergodic optimization, zero temperature limits and the Max-Plus algebra, 29 Col{\'o}quio Brasileiro de Matem{\'a}tica, IMPA, (2013).

 \bibitem{BLM} A. Baraviera, A.O. Lopes and J. K. Mengue, On the selection of subaction and measure for a
subclass of potentials defined by P. Walters, \emph{Ergodic Theory and Dynamical Systems} \textbf{33}(2013),
1338-1362.

\bibitem{BG}R. Bissacot and E. Garibaldi, Weak KAM methods and ergodic optimal problems for countable Markov shifts,
 \emph{Bulletin of the Brazilian Mathematical Society} \textbf{41} (2010), 321-338.

 \bibitem{BF}R. Bissacot and  R. Freire Jr, On the existence of maximizing measures for irreducible countable Markov shifts: a dynamical proof,
 \emph{Ergodic Theory and Dynamical Systems} \textbf{34} (2014), 1103-1115.

\bibitem{BGT} R. Bissacot, E. Garibaldi, Ph. Thieullen. Zero-temperature phase diagram for double-well type potentials in the summable variation class. To appear in \emph{Ergodic Theory and Dynamical Systems}, preprint (2016), 	arXiv:1512.08071.

\bibitem{Bou}  T. Bousch. La condition de Walters. \emph{Annales scientifiques de l'\'Ecole Normale Sup\`erieure} \textbf{34},  Issue: 2, (2001), 287-311.

\bibitem{Bo} R. Bowen, Equilibrium States and the Ergodic Theory of Anosov Diffeomorphisms. 2nd Edition. Edited by Jean-Ren\'e Chazottes. Springer-Verlag, Berlin, (2008).

\bibitem{Bremont}J. Br\'emont, Gibbs measures at temperature zero,
\emph{Nonlinearity} \textbf{16}(2)(2003),419-426.

\bibitem{Sarig04} J. Buzzi, O. Sarig, Uniqueness of equilibrium measures for
countable Markov shifts and multidimensional
piecewise expanding maps, \emph{Ergodic Theory and Dynamical Systems} \textbf{23}(2003), 1383-1400

\bibitem{Chazottes}J. R. Chazottes and M. Hochman, On the zero-temperature limit of Gibbs states,
\emph{Comm. Math. Phys} \textbf{297}(1)(2010),265-281.

\bibitem{CLO} G. Contreras, A. O. Lopes e E. R. Oliviera. Ergodic Transport Theory, Periodic Maximizing Probabilities and the Twist Condition. Modeling, Dynamics, Optimization and Bioeconomics I. Springer Proceedings in Mathematics $\&$ Statistics Vol. \textbf{73} (2014), 183-219.

\bibitem{CLT} G. Contreras, A. O. Lopes and Ph. Thieullen, Lyapunov minimizing measures for expanding maps of the circle,
\emph{Ergodic Theory and Dynamical Systems} \textbf{21} (2001), 1379-1409.

\bibitem{GC}G. Contreras. Ground states are generically a periodic
orbit. To appear in \emph{Inventiones mathematicae} (2015), doi: 10.1007/s00222-015-0638-0.

\bibitem{CG} J.-P. Conze and Y. Guivarc'h. Croissance des sommes ergodiques et principe variationnel, manuscript (1993).

%\bibitem{BJ}T. Bousch and O. Jenkinson, Cohomology classes of dynamically non-negative $C^k$ functions, \emph{Inventiones Mathematicae}, \textbf{148} (2002), 207-217.

\bibitem{Daon} Y. Daon, Bernoullicity of equilibrium measures on countable Markov shifts. Disc. Cont. Dyn. Sys. \textbf{33}, (2013), 4003-4015. 

\bibitem{DUZ}A. M. Davie, M. Urba\'nski e A. Zdunik, Maximizing measures on metrizable non- compact
spaces. \emph{Proceedings of the Edinburgh Mathematical Society} \textbf{50}(2007),123-151.

\bibitem{RV}R. Freire Jr, V. Vargas, Equilibrium states and zero
temperature limit on topologically transitive countable Markov
shifts. preprint (2015), arxiv: 1511.01527.

\bibitem{GL}E. Garibaldi and A.O. Lopes, On the aubry-mather theory for symbolic dynamics, \emph{Ergodic Theory and Dynamical Systems}\textbf{28}(2008)791-815.

\bibitem{Iommi}
G. Iommi, Ergodic optimization for renewal type shifts, \emph{Monatshefte f\"ur Mathematik} \textbf{150} (2007), 91-95.

\bibitem{Jenkinson}
O. Jenkinson, Ergodic optimization, \emph{Discrete and Continuous Dynamical Systems, Series A}
\textbf{15} (2006), 197-224.

\bibitem{JMU0}
O. Jenkinson, R. D. Mauldin and M. Urba\'nski, Zero temperature limits of Gibbs-equilibrium states for countable
alphabet subshifts of finite type, \emph{Journal of Statistical Physics} \textbf{119} (2005), 765-776.

\bibitem{JMU1}
O. Jenkinson, R. D. Mauldin and M. Urba\'nski, Ergodic optimization for countable alphabet subshifts of finite type,
\emph{Ergodic Theory and Dynamical Systems} \textbf{26} (2006), 1791-1803.

\bibitem{JMU2}
O. Jenkinson, R. D. Mauldin and M. Urba\'nski, Ergodic optimization for non-compact dynamical systems,
\emph{Dynamical Systems} \textbf{22} (2007), 379-388.

\bibitem{Kempton} T. Kempton. Zero temperature limits of Gibbs equilibrium states for countable Markov shifts. \emph{Journal of Statistical Physics}. \textbf{143}(2011), 795-806.

 \bibitem{Kempton02}T. Kempton, Thermodynamic formalism for symbolic dynamical systems. PhD Thesis, The University of Warwick, (2011).

\bibitem{Leplaideur} R. Leplaideur, A dynamical proof for the convergence of Gibbs measures at temperature zero. \emph{Nonlinearity} \textbf{18}(2005) 2847-2880.

\bibitem{LM} A. O. Lopes and J. K. Mengue,  Zeta measures and Thermodynamic Formalism for temperature zero, \emph{Bulletin of the Brazilian Mathematical Society} \textbf{41} (2010), 449-480.

\bibitem{LM1} A.O. Lopes and J. K. Mengue, Selection of measure and a Large Deviation Principle for the general one-dimensional XY model, \emph{Dynamical Systems}, \textbf{29} (2014), 24-39.

\bibitem{Mane} R. Ma\~{n}\'{e}, Generic properties and problems of minimizing measures of
Lagrangian systems, \emph{nonlinearity}, \textbf{9} (1996), 273?310.


\bibitem{MU} R. D. Mauldin and M. Urba\'nski, Gibbs states on the symbolic space
over an infinite alphabet, \emph{Israel J. Math} \textbf{125} (2001) 93-130 .

\bibitem{MU2} R. D. Mauldin and M. Urba\'nski, Graph directed Markov systems: geometry and dynamics of limit sets, Cambridge University Press(2003).

\bibitem{Mengue}J. K. Mengue. Zeta medidas e princ\'ipio dos grandes desvios. PhD Thesis, Universidade Federal do Rio Grande do Sul, (2010).

\bibitem{Mengue2} J. K. Mengue,  Large deviations for equilibrium measures and selection of subaction, \emph{preprint} (2016), arXiv:1608.05881 [math.DS].

\bibitem{Morris} I. D. Morris, Entropy for zero-temperature limits of Gibbs-equilibrium states for countable-alphabet subshifts of finite type. \emph{Journal of Statistical Physics}. \textbf{126} (2007), 315-324.

\bibitem{Pesin} Y. Pesin, On the work of Sarig on countable Markov chains and thermodynamic formalism, \emph{Journal of Modern Dynamics} \textbf{8} (1) (2014), 1-14.

\bibitem{PP} W. Parry and M. Pollicott, Zeta functions and the periodic orbit structure of hyperbolic
dynamics, \emph{Ast\'erisque} (1990), 187-188.

\bibitem{Pe} E. P\'erez, Princ\'ipio dos Grandes Desvios para Estados de Gibbs-Equil\'ibrio sobre Shifts Enumer\'aveis \`a Temperatura Zero. PhD Thesis, University of S\~ao Paulo, (2015).
     
     
\bibitem{AQuas}A. Quas and J. Siefken. Ergodic Optimization 
of super-continuous function. \emph{Ergodic Theory and Dynamical Systems}.
32:2071-2082,(2012).

\bibitem{Ru} D. Ruelle, Thermodynamic Formalism: The Mathematical Structure of Equilibrium Statistical Mechanics. 2nd Edition, Cambridge University Press, (2004).

\bibitem{Sarig01} O. Sarig, Thermodynamic formalism for countable Markov shifts,\emph{Ergodic Theory and Dynamical Systems}. \textbf{19}(6)(1999), 1565-1593.

\bibitem{Sarig02} O. Sarig. Existence of Gibbs measures for countable Markov shifts, \emph{Proc. of AMS}. \textbf{131}(6)(2003), 1751-1758.

\bibitem{Sarig03}O. Sarig. Lecture Notes on Thermodynamic Formalism for Topological Markov Shifts. Penn State, (2009).

\bibitem{Sarig04} O. Sarig. Thermodynamic formalism for countable Markov shifts. \emph{Proceedings of Symposia in Pure Mathematics} \textbf{89} (2015), 81-117.

\bibitem{Sarig05} O. Sarig. Phase Transitions for Countable Topological Markov Shifts. \emph{Communications in Mathematical Physics} \textbf{217}(2001), 555-577.

\bibitem{Sarig06} O. Sarig. On an example with topological pressure which is not analytic. \emph{C. R. Acad. Sci. Paris} serie I, \textbf{330}  (2000), 311-315.

\bibitem{Sinai} Y.G. Sinai. Gibbs measures in ergodic theory. \emph{Russian Mathematical Surveys} \textbf{27}, no. 4, (1972), 21-64.



\end{thebibliography}
\end{document}